\newtheorem{ass}{Assumption}[section]
\newcommand{\bbR}{\mathbb R}
\newtheorem{theorem}{Theorem}[section]
\newtheorem{lem}{Lemma}[section]
\newtheorem{rem}{Remark}[section]
\newtheorem{examp}{Example}[section]
\newtheorem{prop}{Proposition}[section]
\newcounter{hypA}
\newcounter{hypB}
\newcounter{hypD}
\newtheoremstyle{case}{}{}{}{}{}{:}{ }{}
\theoremstyle{case}
\newcommand{\bbE}{\mathbb{E}}
\newcommand{\bbZ}{\mathbb{Z}}
\newcommand{\bbN}{\mathbb{N}}
\newcommand{\cA}{\mathcal{A}}
\newcommand{\cQ}{\mathcal{Q}}
\newcommand{\cM}{\mathcal{M}}
\newcommand{\cG}{\mathcal{G}}
\newcommand{\sX}{\mathsf X}
\newcommand{\cI}{{\cal I}}
\newcommand{\ba}{\bm{\alpha}}
\newcommand{\bx}{\bm{x}}
\newcommand{\bZ}{\bm{Z}}
\newcommand{\cC}{\mathcal{C}}
\newcommand{\cO}{\mathcal{O}}
\newcommand{\cL}{\mathcal{L}}
\def\blu#1{{{#1}}}
\begin{document}
\pagenumbering{gobble}

%
%
%

\pagenumbering{arabic} 
\begin{center}

{\Large \textbf{Multi-index Sequential Monte Carlo ratio estimators for 
Bayesian Inverse problems}}

\vspace{0.5cm}

BY KODY J.~H. LAW$^{1}$, NEIL WALTON$^{1}$, SHANGDA YANG$^{1}$ \& AJAY JASRA$^{2}$

{\footnotesize $^{1}$School of Mathematics, University of Manchester, Manchester, M13 9PL, UK.}
{\footnotesize E-Mail:\,} \texttt{\emph{\footnotesize kody.law,
neil.walton,
shangda.yang@manchester.ac.uk}}\\
{\footnotesize $^{2}$Applied Mathematics and Computational Science Program; Computer, Electrical and Mathematical Sciences and Engineering Division, King Abdullah University of Science and Technology, Thuwal, 23955-6900, KSA.}
{\footnotesize E-Mail:\,} \texttt{\emph{\footnotesize ajay.jasra@kaust.edu.sa}}
\end{center}


\begin{abstract}
We consider the problem of estimating expectations with respect to a target
distribution with an unknown normalizing constant, and where even the unnormalized
target needs to be approximated at finite resolution.
This setting is ubiquitous across science and engineering applications, 
for example in the context of Bayesian inference where a physics-based model
governed by an intractable partial differential equation (PDE) appears in the likelihood.
A multi-index Sequential Monte Carlo (MISMC) method is used to construct ratio estimators 
which provably enjoy the complexity improvements of multi-index Monte Carlo (MIMC)
as well as the efficiency of Sequential Monte Carlo (SMC) for inference.
In particular, the proposed method provably achieves the canonical complexity
of MSE$^{-1}$, while single level methods require MSE$^{-\xi}$
for $\xi>1$. This is illustrated on examples of Bayesian inverse problems 
{with an elliptic PDE forward model in $1$ and $2$ spatial dimensions, where 
$\xi=5/4$ and $\xi=3/2$, respectively.
It is also illustrated on more challenging 
log Gaussian process models, 
where single level complexity is {approximately $\xi=9/4$} and 
multilevel Monte Carlo (or MIMC with an inappropriate index set) gives 
{$\xi = 5/4 + \omega$, for any $\omega > 0$}, 
whereas our method is again canonical.
\blu{We also provide novel theoretical verification of 
the product-form convergence results which MIMC requires
for Gaussian processes built in spaces of mixed regularity defined 
in the spectral domain, 
which facilitates acceleration with fast Fourier transform methods
via a cumulant embedding strategy, 
and may be of independent interest in the context of 
spatial statistics and machine learning.}} 
\\
\noindent \textbf{Keywords}: Bayesian Inverse Problems, Sequential Monte Carlo, Multi-Index Monte Carlo
%
%
\end{abstract}

\section{Introduction}

There has been an explosion of work over the past decade involving 
the enormously successful multilevel Monte Carlo (MLMC) method \cite{giles2015multilevel} 
for estimating expectations with 
respect to distributions which need to be approximated.
The canonical example is the problem for forward uncertainty quantification 
(UQ), where a single realization of the random variable of interest requires 
the solution to a stochastic, ordinary, or partial differential equation (SDE, ODE or PDE) \cite{oksendal2003stochastic,boyce2017elementary,strauss2007partial}. 
The MLMC framework formulates this problem in terms of a sum of increments
corresponding to approximations at successive resolutions, or {\em levels}. 
Under a suitable coupling of the increments, 
which is typically fairly trivial in the forward context,
the variance of the increments decays as the resolution and cost increase,
and so progressively fewer samples are required to control the variance at higher levels.

In the context of Bayesian inference, one typically 
requires expectations with respect to target distributions
for which the normalizing constant is unknown. 
As an example, let $\pi$ denote a probability density on $\mathsf{X} \times \mathsf{Y}$. 
Assume we know how to evaluate $\pi(x,y)=\pi(y|x)\pi_0(x)$ and $\pi_0(x)$
but not $\pi(y) = \int_\mathsf{X} \pi(y|x) \pi_0(x) dx$.
Now consider the case where one observes $y \in \mathsf{Y}$
and would like to infer the {\em posterior distribution} $\pi(x|y)$, 
given by  
\begin{equation}\label{eq:bayesintro}
\pi(x | y) = \frac{\pi(y|x) \pi_0(x)}{\pi(y)} \, .
\end{equation}
This is referred to as the Bayesian framework, and $\pi(y|x)$ and $\pi_0(x)$
are referred to as the likelihood and the prior, respectively \cite{robert2013monte}.
Note that once the goal of \eqref{eq:bayesintro} is established, 
then a method should be capable of efficiently approximating integrals of the form:
$$
\frac1Z \int_\mathsf{X} \varphi(x) f(x) dx \, ,
$$
where $f(x) \propto \pi(y|x) \pi_0(x)$ and \blu{$Z=\int_\mathsf{X} f(x) dx$}
(i.e. $f(x)$ itself only needs to be proportional to the joint density).
Methods which have been designed for exactly this purpose include
Markov chain Monte Carlo (MCMC) \cite{geyer1992practical}, 
importance sampling \cite{robert2013monte},
and combinations thereof such as sequential Monte Carlo (SMC) samplers \cite{del2006sequential,chopin2020introduction}.
The latter methods are particularly powerful, handling elegantly some of the 
most challenging issues that arise in this context, 
such as small variance, strong dependence between variables, and multimodality.

Over the past decade the excitement about MLMC has intersected with the 
Bayesian computation community, in particular relating to the context of 
Bayesian inverse problems \cite{stuart2010inverse}, where an intractable PDE often 
appears inside the likelihood of the posterior distribution of interest.
 {For instance, we will later consider the case where the likelihood takes the from:
\begin{align*}
\pi(y| x) \propto e^{- \frac{1}{2} \| y - \mathcal G(x) \|^2} \, ,
\end{align*}
where $y$ is an observed set of real-valued outputs and $\cG(x)$ is a 
solution to the outputs from the intractable PDE for a given set of input parameters $x$.}
This context appears to be much more subtle, due to the complications
of combining these technologies.
Early work related to MCMC \cite{hoang2013complexity,dodwell2015hierarchical} 
and SMC samplers \cite{ourmlsmc,moral2017multilevel,beskos2018multilevel}.
More recently, the methodology has also been applied to the context of 
partially observed diffusions \cite{jasra2017multilevel,hoel2016multilevel}, 
for parameter inference \cite{ourac},
online state inference \cite{jasra2017multilevel,chernov2021multilevel,gregory2016multilevel,hoelmlenkf3,ballesio2020wasserstein,mlpflevy}, 
or both \cite{ourmismc2}.
A notable recent body of work relates to continuous-time 
observations in this context \cite{hengmlpf,beskos2021score,ruzayqat2021multilevel}. 
Another notable trend is the application of 
randomized MLMC methods 
\cite{vihola,rmlsmc,rmlpf,heng2021unbiased,rmlmcinf} in this context. 
Typically these methods require unbiased estimators of increments,
which is particularly challenging in the inference context.
{The first work to use randomized MLMC in the context of inference was \cite{vihola}, 
and unbiased increment estimators were available in the context of that work.}
Other more recent instalments have utilized double randomization
strategies in order remove the bias of increment estimators \cite{rmlsmc,rmlpf,heng2021unbiased}.  

The benefits of MLMC are somewhat hampered by the dimension 
of the underlying problem. This is an important issue, particularly in the context of a PDE or a SPDE.
For example, the error associated with a finite element method (FEM) approximation of a PDE
typically depends upon the mesh diameter, $h$, while the number of degrees of 
freedom typically scales like $h^{-d}$, where $d$ is the dimension of the associated PDE.
The multi-index Monte Carlo (MIMC) method was introduced to gracefully handle {the dimension 
dependence of this problem}
\cite{haji2016multi} following, in spirit, from the seminal work on sparse grids \cite{bungartz2004sparse}.
Instead of an estimator based on a sum of increments, the MIMC
method constructs an estimator based on a sum over an index set of $d-$fold 
composition of increments.
Under \blu{suitable regularity} conditions this approach is able to 
leverage convergence in each dimension 
independently and thereby mitigate the curse of dimensionality.

The MIMC method has very recently been applied to the inference context \cite{ourmismc1,ourmismc2,ourmimc},
however the estimates required for increments of increments has proven
challenging from a theoretical perspective, and this has severely limited progress thus far.
In particular, an MIMC method for inference with provable
convergence guarantees does not currently exist, 
except a ratio estimator using simple importance sampling, as considered for
MLMC and QMC in this work \cite{scheichl2017quasi}. 
Such estimators are not expected to be practical for complex target
distributions due to a large constant associated to importance sampling 
\cite{chatterjee2018sample, agapiou2017importance}.

The current work breaks down this theoretical barrier and unveils the MISMC sampler
ratio estimator for posterior inference. 
By employing a ratio estimator, we introduce a theoretically tractable method which provably achieves the benefits of
both SMC samplers for inference and MIMC for multi-dimensional discretization. 
In particular, rather than dealing with self-normalized increments of increments, 
as previous methods have done, the innovation is to construct instead a ratio of
MIMC estimators of an un-normalized integral and its normalizing constant, both
of which can be unbiasedly estimated with SMC sampler.
This seemingly minor difference of formulation 
substantially simplifies the analysis and enables us to establish 
a theory for the convergence of an MIMC method for inference problems --
a theory which until this point had been elusive. 

This article is structured as follows. In Section \ref{sec:motivate} we provide a class of motivating problems for the methodology that is developed.
In Section \ref{sec:comp} we provide a review of the relevant computational methodology that is used in our approach. In Section \ref{sec:MISMC}
we present our method and theoretical results. In Section \ref{ref:numerics} we present numerical results. Finally, in the appendix several technical results are
given, necessary for the theory that is presented in Section \ref{sec:MISMC}.

%

\section{Motivating Problems}\label{sec:motivate}

We consider the setting of Bayesian inference for an elliptic partial differential equation and for the Log Gaussian Cox model, where we must also perform numerical estimation. 

\subsection{Elliptic partial differential equation}
\label{sec:pde}
{
We consider 
the following elliptic PDE.
Consider a convex domain
$\Omega \subset \bbR^D$ with boundary
$\partial \Omega \in C^0$, a function (force vector field) $\mathsf{f}: \Omega \rightarrow \mathbb R$ and a function (permeabilitiy) $a(x):\Omega \rightarrow \mathbb R_+$ which is parameterized by $x\in \sX$. For each $x\in \sX$, we define the (pressure field) $u(x):\Omega \rightarrow \mathbb R$ as the solution to the following PDE on $\Omega$
\begin{align}
  -\nabla \cdot (a(x) \nabla u (x)) &= \mathsf{f}, \qquad\text{on }\Omega \ ,
  \label{eq:elliptic} \\
  u(x) &= 0, \qquad \text{on }  \partial \Omega \ .
  \label{eq:boundary}
\end{align}

In the above PDE, we assume the force vector field is known, e.g. $\mathsf{f}=1$. However, we assume the permeability
$a(x)$ depends upon a parameter $x$ which is a random variable, specifically $x \sim \pi_0$.
The dependence of $a$ on $x$ 
induces a dependence of the solution $u$ on $x$. 
Hence the solution itself, $u(x)(z)$, is a random variable for each $z \in \Omega$ .
}

For concreteness, assume that $D=2$ and $\Omega=[0,1]^2$.
Assume a uniform prior,
\begin{equation}
\label{eq:prior}
x \sim U(-1,1)^d  =: \pi_0 \ .
\end{equation}
For $x \sim \pi_0$, and $z \in \Omega$, the permeability will take the form
\begin{equation}\label{eq:diff}
a(x)(z) = a_0 + \sum_{i=1}^d x_i \psi_i(z) \, ,
\end{equation}
where $\psi_i$ are smooth functions with $\blu{\| \psi_i\|_{L^\infty({\Omega})}} \leq 1$ for $i=1,\dots,d$, 
and $a_0 > \sum_{i=1}^d x_i$.
In particular, for simplicity and concreteness, let $d=2$ and 
$$a(x)(z) = 3 + x_1 \cos(3\pi z_1)\sin(3 \pi z_2) + x_2 \cos(\pi z_1)\sin(\pi z_2).$$ 


\subsubsection{Finite element approximation and error estimates}
\label{sec:fem}

Consider the 1D piecewise linear nodal 
basis functions $\phi_j^K$ defined as follows,
for mesh $\{z^K_i = i / (K+1) \}_{i=0}^{K+1}$, and for $j=1,\dots,K$, 
\[
\phi^K_j(z) =
\begin{cases}
\frac{z-z^K_{j-1}}{z^K_j-z^K_{j-1}}, & z\in [z^K_{j-1},z^K_j] \\
1- \frac{z-z^K_{j}}{z^K_{j+1}-z^K_{j}}, & z\in [z^K_{j},z^K_{j+1}] \\
0, & {\rm else} \ .
\end{cases}
\]
Now, for $\alpha = (\alpha_1, \alpha_2) \in \mathbb N^2$, consider the tensor product grid over $\Omega=[0,1]^2$
formed by 
$$\{(z^{K_{1,\alpha}}_{i_1},z^{K_{2,\alpha}}_{i_2})\}_{i_1=0,i_2=0}^{K_{1,\alpha}+1,K_{2,\alpha}+1},$$ 
where $K_{1,\alpha}= 2^{\alpha_1}$ and $K_{2,\alpha}= 2^{\alpha_2}$
(and the mesh-width in each direction is bounded by $2^{-\alpha_k}$, $k=1,2$).
Let $i=i_1 + K_{1,\alpha} i_2$ for $i_1 =1,\dots, K_{1,\alpha}$ 
and $i_2=1,\dots, K_{2,\alpha}$
and $K_\alpha=K_{1,\alpha}K_{2,\alpha}$, and let 
$\phi^\alpha_i(z) = \phi^\alpha_{i_1,i_2}(z_1,z_2) = 
\phi^{K_{1,\alpha}}_{i_1}(z_1) \phi^{K_{2,\alpha}}_{i_2}(z_2)$ 
be piecewise bilinear functions.
The weak solution of the PDE
(\ref{eq:elliptic})--(\ref{eq:boundary}) 
will be approximated by
$u_\alpha(x) = \sum_{i=1}^{K_\alpha} u_\alpha^i(x) \phi^\alpha_i  \in V$.
Given $x$, the values of $u^i_{\alpha}(x)$ are defined by substituting the expansion into \eqref{eq:elliptic}
and taking inner product with $\phi^\alpha_j$ for $j=1,\dots, K_\alpha$.
In particular, observe that we have
$$
\left \langle -\nabla \cdot \left ( a(x) \nabla \sum_{i=1}^{K_\alpha} u_\alpha^i(x) \phi^\alpha_i \right ) , 
\phi_j^\alpha \right \rangle = \langle \mathsf{f} , \phi_j^\alpha  \rangle \ , \quad j=1,\dots, K_\alpha \ .
$$
Using integration by parts and observing that $\phi_i^\alpha|_{\partial \Omega} \equiv 0$, then
$$
\sum_{i=1}^{K_\alpha}  
\langle a(x)  u_\alpha^i(x) \nabla \phi^\alpha_i , \nabla \phi_j^\alpha  \rangle 
= \langle \mathsf{f} , \phi_j^\alpha  \rangle \ , \quad j=1,\dots, K_\alpha \ .
$$

We can represent the solution as a vector 
${\bf u}_\alpha(x) = [u_\alpha^i(x):i=1,\dots, K_\alpha]$, and
define ${\bf f}_{\alpha,j} = \langle \mathsf{f}, \phi^\alpha_j \rangle$ and
$$
{\bf A}_{\alpha,ij}(x) := \int_{z_{1,j_1-1}}^{z_{1,j_1+1}} \int_{z_{2,j_2-1}}^{z_{2,j_2+1}}
a(x)(z) \nabla\phi^\alpha_i(z) \cdot \nabla \phi^\alpha_j(z) dz ,
$$
where we introduce the notation $j:=j_1 + j_2 K_{1,\alpha}$ (for $j_1 =1,\dots, K_{1,\alpha}$ and $j_2=1,\dots, K_{2,\alpha}$).
Observe that if $i=i_1 + i_2 K_{1,\alpha}$, then the integral is zero for all $i$ such that
$i_k < j_k-1$ or $i_k > j_k + 1$, for $k \in \{1,2\}$.  
So the above matrix ${\bf A}_\alpha(x)$  is sparse, and it is straight-forward to verify that it is symmetric positive definite.

The approximate weak solution to equations \eqref{eq:elliptic}, \eqref{eq:boundary}
is given by the system $${\bf A}_\alpha(x) {\bf u}_\alpha(x) = {\bf f}_\alpha.$$
 Due to the sparsity of ${\bf A}_\alpha(x)$, for $D \leq 2$ 
 the solution can be obtained for a cost of roughly $\cO(K_\alpha)$
using an iterative solver based on Krylov subspaces, such as conjugate gradients 
\cite{nocedal2006numerical}. 
For $D \geq 3$ it may no longer be possible to achieve a linear cost
-- see e.g. \cite{haji2016multi}.
See the references \cite{ciarlet2002finite, brenner2007mathematical} 
for further description and much more.

The weak solution $u$ of (\ref{eq:elliptic})-(\ref{eq:boundary}) is said to be \blu{$W^{2,2}$} regular if there exists a $C>0$, such that 
$$\|\nabla^2u\|
\leq C\|\mathsf{f}\| 
$$ for every $\mathsf{f} \in L^2(\Omega),$
where $\|\cdot\|$ denotes the $L^2(\Omega)$ norm.
For the purposes of the present work, it suffices to observe the following proposition
\cite{braess2007finite,elman2014finite}.
{\begin{prop}\label{prop:standardpde}
For $a(x)$ given by \eqref{eq:diff} and uniformly over $x \in [-1,1]^d$, 
$\mathsf{f}\in L^2$ and $\Omega$ convex,
the weak solution of (\ref{eq:elliptic})-(\ref{eq:boundary}) is $W^{2,2}$
regular, and there exists a $C>0$ such that
$$
\| \nabla (u_\alpha(x) - u(x) ) \| \leq C 2^{-\min\{\alpha_1,\alpha_2\}} \, .
$$
Furthermore, 
$$
\| u_\alpha(x) - u(x) \| \leq C 2^{-2\min\{\alpha_1,\alpha_2\}} \, .
$$
\end{prop}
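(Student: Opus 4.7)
The plan is to deduce both bounds from standard finite element arguments, with the main care being to verify that all constants are uniform in the parameter $x \in [-1,1]^d$. First I would establish that the bilinear form $B_x(u,v) := \langle a(x) \nabla u, \nabla v\rangle$ is uniformly coercive and continuous on $V = H_0^1(\Omega)$. Indeed, for $a(x)$ given by \eqref{eq:diff} with $a_0 > \sum_{i=1}^d x_i$ and $\| \psi_i \|_\infty \le 1$, there exist $0 < a_{\min} \le a_{\max} < \infty$ such that $a_{\min} \le a(x)(z) \le a_{\max}$ uniformly in $x \in [-1,1]^d$ and $z \in \Omega$, which implies
\[
a_{\min} \| \nabla v \|^2 \le B_x(v,v), \qquad | B_x(u,v)| \le a_{\max} \| \nabla u \|\, \|\nabla v\|.
\]
Since $\Omega$ is convex and $\mathsf{f} \in L^2(\Omega)$, elliptic regularity (e.g.\ Grisvard's theorem) yields the $H^2$ regularity stated in the proposition, with a constant depending on $a_{\min}, a_{\max}$ and (bounded) derivatives of $a(x)$, hence uniform in $x$.

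For the energy-norm bound, I would invoke C\'ea's lemma, which gives
\[
\| \nabla (u(x) - u_\alpha(x)) \| \le \frac{a_{\max}}{a_{\min}} \inf_{v \in V_\alpha} \| \nabla (u(x) - v) \|,
\]
where $V_\alpha = \mathrm{span}\{\phi_i^\alpha\}$. Taking $v = I_\alpha u(x)$, the nodal bilinear interpolant, I would use the anisotropic interpolation estimate for tensor-product bilinear elements on a rectangular mesh with widths $h_k = 2^{-\alpha_k}$:
\[
\| \nabla (u - I_\alpha u) \| \le C\bigl( h_1 + h_2\bigr) \| u \|_{H^2(\Omega)} \le C\, 2^{-\min\{\alpha_1,\alpha_2\}} \| u \|_{H^2(\Omega)}.
\]
Combining this with the $H^2$ bound $\| u(x)\|_{H^2} \le C \|\mathsf{f}\|$ yields the first claim.

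For the $L^2$ bound I would run the Aubin--Nitsche duality argument: given $\phi \in L^2(\Omega)$, let $w \in V$ solve the adjoint problem $B_x(v,w) = \langle \phi, v\rangle$ for all $v \in V$. By the same $H^2$ regularity, $\|w\|_{H^2} \le C \|\phi\|$. Galerkin orthogonality applied to the error $e_\alpha := u(x) - u_\alpha(x) \in V$ gives $B_x(e_\alpha, w_\alpha) = 0$ for any $w_\alpha \in V_\alpha$, and choosing $w_\alpha = I_\alpha w$ produces
\[
\langle \phi, e_\alpha \rangle = B_x(e_\alpha, w - I_\alpha w) \le a_{\max}\, \| \nabla e_\alpha\|\, \|\nabla(w - I_\alpha w)\|.
\]
Applying the interpolation estimate to $w$ and the energy bound to $e_\alpha$ yields $\langle \phi, e_\alpha\rangle \le C\, 2^{-2\min\{\alpha_1,\alpha_2\}} \|\mathsf{f}\|\, \|\phi\|$, and taking the sup over $\phi$ with $\|\phi\|=1$ produces the second claim.

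The only genuine subtlety is the uniform dependence on $x$: both the $H^2$ regularity constant and the interpolation/quasi-interpolation constants must be independent of $x \in [-1,1]^d$. This follows because the bounds $a_{\min}, a_{\max}$ on $a(x)$ are uniform, and because the $H^2$ regularity constant for a convex polygonal domain with $L^2$ right-hand side depends only on $\Omega$, $a_{\min}$, $a_{\max}$, and $\|a(x)\|_{W^{1,\infty}}$, all of which are controlled uniformly by \eqref{eq:diff}. Everything else reduces to textbook facts from \cite{braess2007finite,elman2014finite}.
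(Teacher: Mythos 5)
Your proof is correct and is precisely the standard finite element argument (uniform ellipticity and $H^2$ regularity on a convex domain, then C\'ea's lemma with the tensor-product bilinear interpolation estimate for the energy norm, and Aubin--Nitsche duality for the $L^2$ bound) that the paper defers to by citing \cite{braess2007finite,elman2014finite} without writing out the details. You have correctly identified that the only point requiring care is the uniformity in $x$ of the ellipticity constants and of $\|a(x)\|_{W^{1,\infty}}$, which is exactly what makes the textbook bounds apply with a single constant over $[-1,1]^d$.
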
}

\subsubsection{A Bayesian inverse problem}
\label{sec:bip}
{In the PDE \eqref{eq:elliptic}-\eqref{eq:boundary}, the parameter $x$ is unknown. 
Here we infer estimates about the true value $x$ from noisy observations of the solution to the PDE, $u(x)$. A further confounding factor is that the closed form solution to $u(x)$ is, in general, not known in closed-form and instead we must numerically approximate $u(x)$ with $u_{\alpha}(x)$ as described above.}

Now observations $y$ will be introduced and we will consider the inverse problem,
given by 
\begin{equation}
\label{eq:targetinf}
\pi(dx) := \pi(dx|y) \propto L(x) \pi_0(dx) \ , 
\end{equation}
where $L(x) \propto \pi(y|x)$ and the dependence upon $y$ is suppressed in the notation.
We will use the notations $d\pi(x)=\pi(dx)=\pi(x)dx$
to mean the same thing, i.e. probability under $\pi$ 
of an infinitesimal volume element $dx$ (Lebesgue measure by default) 
centered at $x$, 
and the argument may be omitted from $d\pi$ where the meaning is understood.

Define the following vector-valued function 
\begin{equation}\label{eq:obop}
  \mathcal{G}(u(x)) = [v_1(u(x)),\dots,v_{\blu{n}}(u(x))]^\top,
\end{equation}
where $v_i \in L^2$ and 
$v_i(u(x)) = \int v_i(z) u(x)(z) dz$ for $i = 1,\dots,\blu{n}$,
for some $\blu{n}\geq 1$. 
It is
assumed that the data take the form
\begin{equation}\label{eq:ob}
  y = \mathcal{G}(u(x) ) + \nu, \qquad \nu \sim N(0,\Xi), \qquad \nu \perp x \, ,\footnote{Here we use $\perp$ to denote pairwise independence of random variables.}
\end{equation}
and we define $$L(x) := \exp\Big( -\frac12 |y-\cG(u(x))|_\Xi^2 \Big) \, .
$$
Here $y$ is suppressed from the notation. Also we apply the convention that $|w|_\Xi := (w^\top \Xi^{-1} w)^{1/2} $.

In particular, $u(x)$ is the (weak) solution map
of (\ref{eq:elliptic})--(\ref{eq:boundary}), for given input $x$.
Denote its weak approximation at resolution multi-index $\alpha$ 
by $u_\alpha(x)$. 
The approximated likelihood is
given by $$L_\alpha(x) :=\exp( -\frac12 |y-\cG(u_{\alpha}(x))|_\Xi^2 ),$$
and the associated target is 
\begin{equation}
\label{eq:target}
\pi_\alpha(dx) \propto 
L_\alpha(x) \pi_0(dx) \, .
\end{equation}

{The following proposition summarizes the key result.
\begin{prop}\label{prop:bip}
In the present context, there is a $C>0$ such that $u,u_\alpha \leq C$, 
hence a $c>0$ such that $L,L_\alpha \geq c > 0$, and so \eqref{eq:targetinf} and
\eqref{eq:target} are well-defined. 
Furthermore, following Proposition \ref{prop:standardpde}
 and the continuity of $L$ as a function of $u$, the following 
rate estimate holds uniformly in $x$
$$
| L_\alpha(x) - L(x) | \leq C 2^{-2\min\{\alpha_1,\alpha_2\}} \, .
$$
\end{prop}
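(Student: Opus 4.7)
The plan is to establish uniform-in-$x$ bounds on $u$ and $u_\alpha$ in an appropriate norm, deduce uniform lower and upper bounds on $L$ and $L_\alpha$, and then convert the $L^2$ convergence rate of Proposition \ref{prop:standardpde} into the claimed rate for the likelihood via Lipschitz continuity of the map $w \mapsto \exp(-\frac{1}{2} |y-w|_\Xi^2)$.

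For the boundedness step, I would first note that on the parameter domain $[-1,1]^d$ with $d=2$ and $a_0=3$, the coefficient $a(x)(z)$ is bounded uniformly above and below away from zero, since $\|\psi_i\|_\infty \leq 1$ gives $a(x)(z) \geq a_0 - \sum_i |x_i| \geq 1$. Applying Lax--Milgram and the Poincar\'e inequality to the weak formulation of \eqref{eq:elliptic}--\eqref{eq:boundary} then yields a constant $C$ independent of $x$ with $\|u(x)\|_{L^2(\Omega)} \leq C$. The same bound for $u_\alpha(x)$ follows either by applying the identical energy estimate to the Galerkin problem or by the triangle inequality combined with the $L^2$ estimate in Proposition \ref{prop:standardpde}. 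Since each $v_i \in L^2(\Omega)$, Cauchy--Schwarz then gives $|\mathcal{G}(u(x))|, |\mathcal{G}(u_\alpha(x))| \leq M$ uniformly in $x$ and $\alpha$ for some constant $M<\infty$.

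Since the map $w \mapsto \exp(-\frac{1}{2}|y-w|_\Xi^2)$ is continuous and carries the closed ball $\{w \in \mathbb{R}^m : |w| \leq M\}$ into an interval $[c,1]$ for some $c>0$ depending only on $M$, $|y|$ and the spectrum of $\Xi$, I obtain $L(x), L_\alpha(x) \in [c,1]$ uniformly in $x$ and $\alpha$. Consequently the normalizing constants $\int L\, d\pi_0$ and $\int L_\alpha\, d\pi_0$ are strictly positive and finite, so both \eqref{eq:targetinf} and \eqref{eq:target} are well-defined probability measures.

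For the quantitative estimate, the same map is Lipschitz on the ball of radius $M$ with some constant $C_1=C_1(M,y,\Xi)$, so combining this with componentwise Cauchy--Schwarz yields
\[
|L_\alpha(x) - L(x)| \leq C_1\, |\mathcal{G}(u_\alpha(x)) - \mathcal{G}(u(x))| \leq C_2\, \|u_\alpha(x) - u(x)\|_{L^2(\Omega)},
\]
where $C_2$ depends on $C_1$ and $(\|v_i\|)_{i=1}^m$. Proposition \ref{prop:standardpde} supplies $\|u_\alpha - u\|_{L^2} \leq C\, 2^{-2\min\{\alpha_1,\alpha_2\}}$ uniformly in $x$, which completes the proof. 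The only point requiring genuine care is that every constant above is uniform in $x \in [-1,1]^d$; this is essentially built in, because the ellipticity constant of $a(x)$ is uniformly positive on the parameter domain and because Proposition \ref{prop:standardpde} is itself stated uniformly in $x$.
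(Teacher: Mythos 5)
Your proof is correct and follows essentially the same route sketched in the statement itself (uniform ellipticity $\Rightarrow$ Lax–Milgram/Poincaré bound on $u$ and $u_\alpha$ $\Rightarrow$ uniform bounds on $\mathcal{G}$ and hence on $L, L_\alpha$ $\Rightarrow$ Lipschitz continuity of the Gaussian likelihood combined with Proposition \ref{prop:standardpde}); the paper does not spell out more than that outline, and your argument supplies the missing details faithfully. The only caveat, which you implicitly handle by working with $v_i \in L^2$, is that the concrete example later in the paper with pointwise observations $v_i(u) = u(z_i)$ falls outside this $L^2$ framework, a fact the paper itself flags as extending the stated theory.
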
}

For the concrete example of $D=2$,
let the observations be given by $v_i(u) := u(z_i)$, 
for $i=1,\dots,4$, 
where $z_i \in \{(0.25,0.25), (0.25,0.75), (0.75,0.75), (0.75,0.25)\}$, 
and let $\Xi = \xi^2 I$.
This example has been considered in the context of an 
MLSMC sampler method in \cite{ourmlsmc}. 
{It is noted that this example extends the theory described, 
since $v_i \notin L^2$.}

\subsection{Log Gaussian Process models}
\label{sec:lgc}

{Another model problem which will be considered is 
the log-Gaussian process (LGP), 
and the related log-Gaussian Cox (LGC) process,
which are commonly used in spatial statistics. 
In this example 
the dimension of the state space grows with level. 

Specifically we aim to model 
a dataset comprised of 
the location of $n=126$ Scots pine saplings in a natural forest in Finland \cite{cox},
denoted $z_1,\dots,z_{n} \in [0,1]^2$.
The LGC version of our model is based on the one presented in \cite{heng_bishop_2020}. 
The process of interest is defined as $\Lambda = \exp(x)$ 
where $x$ 
is a Gaussian process, a priori distributed in terms of a KL-expansion as follows,
for $z \in [0,2]^2$,
\begin{equation}\label{eq:GPKL}
x(z) = \theta_1 + 
\sum_{k \in \bbZ \times \bbZ_+ \cup  \bbZ_+ \times 0} 
\blu{\rho_k(\theta)} (\xi_k \phi_k(z) + \xi^*_k \phi_{-k}(z)) \, , \quad \xi_k \sim 
\mathcal{CN}(0,1) ~~ {\sf i.i.d.} \, ,
\end{equation}
where $\mathcal{CN}(0,1)$ denotes a standard complex Normal distribution, $\xi^*_k$ is the complex conjugate of $\xi_k$, and
$\phi_k(z) \propto \exp[\pi i z \cdot k]$ are Fourier series basis functions (with $i=\sqrt{-1}$),
and 
\begin{equation}\label{eq:prodspec}
{\blu{\rho_k^2(\theta)} = \theta_2/((\theta_3 + k_1^2)(\theta_3 + k_2^2))^{\frac{\beta+1}{2}} \, .}
\end{equation}
{The coefficient $\beta$ controls the smoothness of the Gaussian process.}
{The parameters $\theta$ will be assumed known in the present work,
but these can also be fit within a hierarchical modelling framework.}
The associated prior measure is denoted by $\mu_0$.
Following the formulation from \cite{heng_bishop_2020}, 
the likelihoods are defined by 
\begin{eqnarray}\label{eq:lgclike} 
{\sf(LGC)} \qquad \frac{d \pi}{d \pi_0}(x) 
&\propto& \exp \left [\sum_{j =1}^{n} x(z_j) - \int_{[0,1]^2} \exp(x(z)) dz \right] \, , \\
\label{eq:lgplike}
{\sf(LGP)} \qquad \frac{d \pi}{d \pi_0}(x) 
&\propto& \exp \left [\sum_{j =1}^{n} x(z_j) - n\log \int_{[0,1]^2} \exp(x(z)) dz \right]  \, .
\end{eqnarray} 
See e.g. \cite{tokdar2007posterior} for a description of the LGP version, which is given second above.
Note that only $z \in [0,1]^2$ is required. 
The periodic prior measure is defined on $[0,2]^2$ so that no boundary conditions 
are imposed on the sub-domain $[0,1]^2$ and 
the fast Fourier transform (FFT) can be used for approximation,
as described below.

The finite approximation is constructed as follows.
First the KL expansion \eqref{eq:GPKL} is truncated
\begin{equation}\label{eq:GPKLfin}
x_\alpha(z) = \theta_1 + 
\sum_{k \in \mathcal{A}_\alpha} 
{\rho_k^2(\theta)} (\xi_k \phi_k(z) + \xi^*_k \phi_{-k}(z)) \, , \quad \xi_k \sim 
\mathcal{CN}(0,1) ~~ {\sf i.i.d.} \, ,
\end{equation}
where $\mathcal{A}_\alpha := \{-2^{\alpha_1/2},\dots,2^{\alpha_1/2}\} 
\times \{1,\dots,2^{\alpha_2/2}\} \cup
\{1,\dots,2^{\alpha_2/2}\}\times 0$.
Note that $x_\alpha(z)$ can be approximated on a grid 
$\{0,2^{-\alpha_1}, \dots 1-2^{-\alpha_1}\}\times\{0,2^{-\alpha_2}, \dots 1-2^{-\alpha_2}\}$
using the FFT with a cost $\mathcal{O}((\alpha_1+\alpha_2) 2^{\alpha_1+\alpha_2})$.
Now $\hat{x}_\alpha(z)$ is defined as an interpolant (for example linear) 
over the grid output from FFT.
The finite approximation of the likelihood is then defined by 
\begin{eqnarray}\label{eq:lgclikefin} 
{\sf(LGC)} \qquad \frac{d \pi_{\alpha}}{d \pi_{0}}(x_\alpha) 
&\propto& \exp \left [\sum_{j =1}^{n} \hat{x}_\alpha(z_j) - 
 Q(\exp(x_\alpha)) \right ] \, , \\  \label{eq:lgplikefin} 
{\sf(LGP)} \qquad \frac{d \pi_{\alpha}}{d \pi_{0}}(x_\alpha) 
&\propto& \exp \left [\sum_{j =1}^{n} \hat{x}_\alpha(z_j) - 
n \log Q(\exp(x_\alpha)) \right ] \, ,
\end{eqnarray} 
where $Q$ denotes a quadrature rule, 
which may for example be given by  
$Q(\exp(x_\alpha))=2^{-(\alpha_1+\alpha_2)} 
\sum_{h \in \prod_{i=1}^2\{0,2^{-\alpha_i}, \dots, 
1-2^{-\alpha_i}\}} \exp(x_{\alpha}(h))$
or $Q(\exp(x_\alpha))=\int \hat{x}_\alpha(z) dz $.

If 
one uses the prior with isotropic spectrum
${\rho_k^2(\theta) = \theta_2/(\theta_3 + k_1^2+k_2^2)^{\frac{3}{2}}}$,
then our target measure
coincides with the standard prior 
of \cite{heng_bishop_2020} in the limit as $\min_i {\alpha_i} \rightarrow \infty$.
One can understand the connection in this context via 
the circulant embedding method based on FFT \cite{lord}.  
However, previous work has employed the (dense) 
kernel representation of the covariance function
instead of diagonalizing it with FFT. 
For our product-form spectrum, the regularity would be matched
for $\beta=1$, corresponding to a product of Ornstein-Uhlenbeck processes.
Instead we will choose $\beta=1.6$ 
for convenience, which means that our prior is slightly smoother.

}

{
\subsubsection{LGP and LGC theoretical results}
\label{ssec:lgctheory}

First we state a simple convergence result for Gaussian process 
of the form \eqref{eq:GPKL} with spectral decay 
corresponding to \eqref{eq:prodspec}.

\begin{figure}[H]
    \centering
    \includegraphics[width=.6\linewidth]{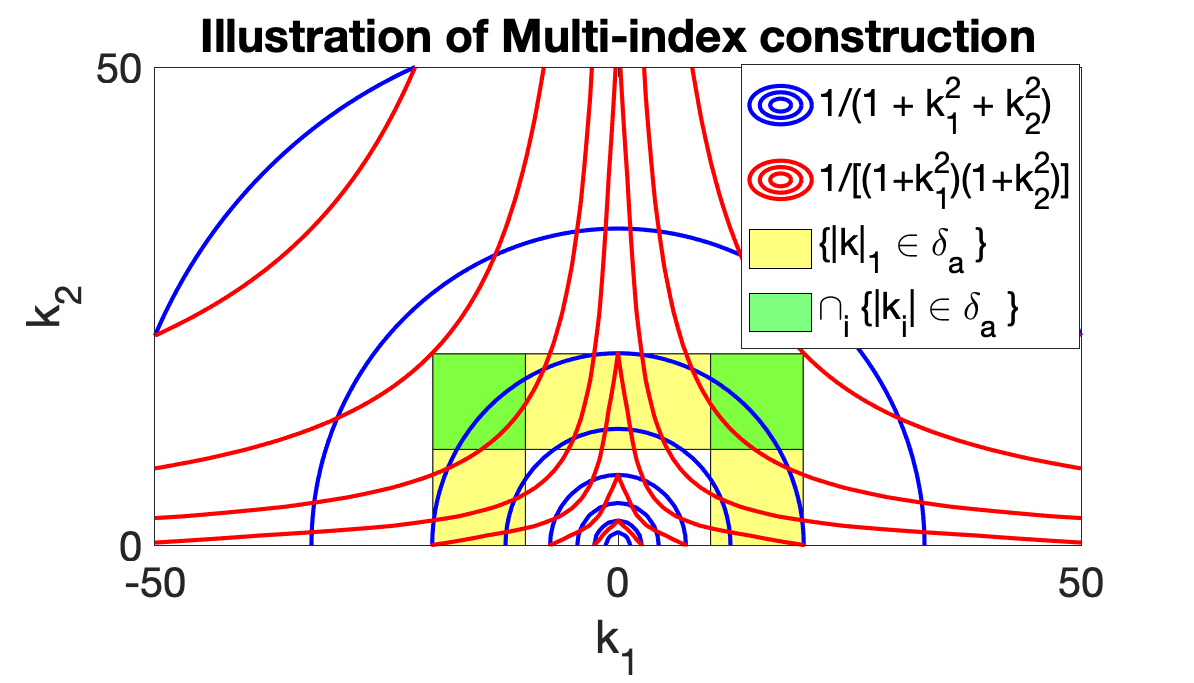}
    \caption{\blu{A cartoon of variance contours associated to a function in 
    $H_{1/2}^{\sf m}$ (red) and a function in $W^{1/2,2}$ (blue). 
    Letting 
    $\delta_a = [2^{a/2},2^{a}]$,
    the spectral region associated to an increment of approximations 
    on the index sets defined in \eqref{eq:GPKLfin}, 
    $\{ |k|_1 \in \delta_a \} =
    \{ k \in \cA_\alpha\} \cap 
    \{k\notin \cA_{\alpha/2}\}$ (with $\alpha=(a,a)$),
    is depicted in yellow. Its intersection 
    with the region associated to an {\em increment 
    of increments}, 
    $\cap_{i=1}^2\{ {|k_i|} \in \delta_a\}$, is depicted in green.}}
    \label{fig:mixed}
\end{figure}

It will be useful to define the following operator 
{$A$ on the space  
of functions in $L^2(\Omega)$:}
\begin{equation}\label{eq:mixedlap}
A=\sum_{k \in \bbZ^2} a_k \phi_k \otimes \phi_k \, 
, \qquad a_k = \blu{(1+k_1^2)(1+k_2^2)} \, ,
\end{equation}
the mixed Sobolev-like norms
\begin{equation}\label{eq:soblike}
\|x\|_q := \| A^{q/2} x \| \, ,
\end{equation}
where \blu{ $A^{q/2} = \sum_{k \in \bbZ^2} a_k^{q/2} \phi_k \otimes \phi_k$ and} we recall $\|\cdot\|$ is the $L^2(\Omega)$ norm,
and the spaces 
\begin{equation}\label{eq:soblike}
H_q^{\blu{\sf m}} := 
\{x \in L^2(\Omega); \|x\|_q < \infty \}\, .
\end{equation}
\blu{We note that these spaces ensure 
{\em mixed regularity} (hence superscript ${\sf m}$), 
rather than the typical regularity associated to standard Sobolev 
spaces. It is precisely this property which
multi-index methods are designed to exploit.
Figure \ref{fig:mixed} shows the contours of
functions in $H_{1/2}^{\sf m}$ (red) and  $W^{1/2,2}$ (blue), along with the regions associated to 
an increment (yellow) and increment of an increment at approximation level 
{$\alpha=(a,a)$}.
From inspection, it is clear
that increments of increments are higher order 
in comparison to increments for functions in the mixed space, 
but not for functions in the standard space.}

The following proposition is proven in Appendix \ref{app:lgc}.
 \begin{prop}\label{prop:prodspecconv}
 Let $x \sim \pi_0$, where $\pi_0$ is a Gaussian process 
 of the form \eqref{eq:GPKL} with spectral decay 
corresponding to \eqref{eq:prodspec}, 
and let $x_\alpha$ correspond to truncation on the index set 
$\cA_\alpha$ as in \eqref{eq:GPKLfin}. 
Then $x \in H_q^{\sf m}$ for all $q<\beta/2$, and for $r \in [0,q)$
there is a $C>0$ such that 
$$
\| x_\alpha - x \|_r^2 \leq C \|x\|_q^2 2^{-2(q-r)\min_i \alpha_i} \, .
$$
 \end{prop}

For $\beta=1$, \eqref{eq:prodspec} looks like a product of OU processes,
with the regularity of \blu{Wiener} measure \cite{pavliotis}. Hence,
following from Proposition \ref{prop:prodspecconv}, $x$ is almost surely continuous for
$\beta \geq 1$, and \eqref{eq:lgclike} and \eqref{eq:lgplike} are well-defined.
{It is worth noting that \eqref{eq:lgclike} and \eqref{eq:lgplike}
are not well-defined for the standard prior, e.g. from \cite{heng_bishop_2020},
since the Sobolev Embedding Theorem 
(see e.g. \cite{stuart2010inverse}) 
does not guarantee that the solution is almost surely continuous.
However, non-infinitesimal representations of \eqref{eq:lgclike}, i.e.
for finite partitions of the domain, can still be computed as long as $\Lambda$ is integrable.}

\blu{The following proposition ensures our LGC and LGP posterior
measures are well-defined on function space and has a density with respect to the prior.}
It is proven in Appendix \ref{app:lgc}.

{\begin{prop}\label{prop:lgcpost}
	Given $x: \Omega \rightarrow \mathbb R$ is a Gaussian process, with probability measure denoted $\pi_0$, defined on compact finite dimensional space $\Omega$, that is almost surely continuous and has a finite mean and covariance. If we define $\pi$ by
\begin{eqnarray*}{\sf(LGC)} \qquad \frac{d \pi}{d \pi_0}(x) 
&\propto& \exp \left [\sum_{j =1}^{n} x(z_j) - \int_{\Omega} \exp(x(z)) dz \right] \, , \\
{\sf(LGP)} \qquad \frac{d \pi}{d \pi_0}(x) 
&\propto& \exp \left [\sum_{j =1}^{n} x(z_j) - n\log \int_{\Omega} \exp(x(z)) dz \right]  \, ,
\end{eqnarray*} 
for $n \in \mathbb N$ then $\pi(dx)$ is a well-defined probability measure,
and can be represented in terms of its density with respect to $\pi_0$:
$$
\pi(dx) = \frac{d\pi}{d\pi_0} \pi_0(dx) \, .
$$ 
\end{prop}
}

The analogue to Proposition \ref{prop:bip} takes the following form.
The proposition is again reproduced, and proven, in Appendix \ref{app:lgc}.

\begin{prop}\label{prop:biplgc}
For both LGP and LGC, 
there is a $C>0$ such that for $x \sim \pi_0$
and $x_\alpha = P_{\cA_\alpha}x$\blu{, where  $P_{\cA_\alpha}$ denotes the orthogonal 
projection onto the index set $\cA_\alpha$ defined in \eqref{eq:GPKLfin},}
the following rate estimate holds for all $q<(\beta-1)/2$ 
$$
\bbE | L_\alpha(x_\alpha) - L(x) |^2 \leq C 2^{-2\min\{q,1\} \min\{\alpha_1,\alpha_2\}} \, .
$$
\end{prop}
}

\section{Computational Methodology}\label{sec:comp}

\blu{\subsection{Approximate Monte Carlo}
\label{sec:MC}
For concreteness, 
in this subsection we will consider the case of the PDE example from 
subsection \ref{sec:pde}.
The case of subsection \ref{sec:lgc} follows similarly.}
Let $\sX := [-1,1]^d$ be the domain of $x$.

\subsubsection{Monte Carlo}
\label{ssec:MC}

The forward uncertainty quantification (UQ) problem is the following.
Given a quantity of interest 
$\varphi_\alpha = \varphi \circ u_\alpha :  \sX \rightarrow \bbR$, 
compute its expectation
$$
\bbE \varphi_\alpha (x) = \int_{\sX} \varphi( u_\alpha(x) ) \pi_0(dx).
$$
The typical strategy is to independently sample $x^{i} \sim \pi_0$, for $i = 1, \dots, N$, and then approximate 
$$
\bbE \varphi_\alpha (x) \approx \frac1N \sum_{i=1}^N \varphi( u_\alpha(x^i) ).
$$
For example, we can let 
$$
\varphi_\alpha(x) = \| u_\alpha(x) \|^2 = \int_{\Omega} |u_\alpha(x)(z)|^2 dz 
\approx \sum_{i=1}^{K_{\alpha_1}} \sum_{j=1}^{K_{\alpha_2}} u^i_\alpha(x) u^j_\alpha(x) \int_{\Omega} \phi_i^\alpha(z) \phi_j^\alpha(z) dz \, ,
$$
\blu{where the latter can be written as} ${\bf u}_\alpha^T {\bf K}_\alpha {\bf u}_\alpha$, 
where ${\bf K}_{\alpha,ij} := \langle \phi_i^\alpha, \phi_j^\alpha \rangle$.

\subsubsection{Multi-index Monte Carlo}
\label{sec:MIMC}

With MIMC \cite{haji2016multi}, we apply the approximation 
\begin{equation}\label{eq:mimc}
\bbE \varphi( x ) \approx \sum_{\alpha \in \cI}\bbE [ \Delta \varphi_\alpha( x ) ] \ ,
\end{equation}
where the difference of differences operator is defined as 
$\Delta \varphi_\alpha := \Delta_D \circ \cdots \circ \Delta_1 \varphi_\alpha$
with $\Delta_i \varphi_\alpha := \varphi_\alpha - \varphi_{\alpha - e_i}$, 
$e_i$ is the $i^{\rm th}$ standard basis vector in $\bbR^D$, 
and $\varphi_{\alpha} \equiv 0$ if $\alpha_i<0$ for any $i$.
The task is then to approximate 
the expectation of the increment of increments 
for each 
$\alpha \in \cI \subset \bbZ^D$.
For example, for $D=2$, one must approximate
\[\begin{split}
 \bbE [ \Delta \varphi_\alpha( x ) ] = &
\int_{[-1,1]^2} \Big ( [\varphi( u_\alpha(x)) - \varphi(u_{\alpha_1,\alpha_2-1}(x))] \\
& -  [\varphi( u_{\alpha_1-1,\alpha_2}(x)) - \varphi(u_{\alpha_1-1,\alpha_2-1}(x))] \Big ) \pi_0(dx) \ .
\end{split}\]
To do this we sample $x^i_\alpha \sim \pi_0$, i.i.d. for $i = 1, \dots, N_\alpha$, 
and then approximate 
\[\begin{split}
\bbE [ \Delta  \varphi_\alpha( x ) ] \approx  
\bbE_\alpha^{N_\alpha} [ \Delta \varphi_\alpha( x ) ] & 
:= \frac1{N_\alpha}\sum_{i=1}^{N_\alpha}  \Delta  \varphi_\alpha( x^i_\alpha ) 
\end{split}\]
Observe that $\bbE [ \bbE_\alpha^{N_\alpha} [ \Delta \varphi_\alpha( x ) ] ] = \bbE [ \Delta  \varphi_\alpha( x ) ]$.
Furthermore, based on approximation properties of $u_\alpha$, one expects a $C>0$ such that
\begin{equation}\label{eq:goodcoupling}
\bbE \left [ (\bbE_\alpha^{N_\alpha} [ \Delta \varphi_\alpha( X ) ] - \bbE [ \Delta  \varphi_\alpha( X ) ] )^2 \right ] 
\leq \frac{C}{N_\alpha} \prod_{i=1}^D 2^{-\beta_i \alpha_i} \, .
\end{equation}
For the example in subsection \ref{sec:fem} we have $\beta_1=\beta_2=4$
\cite{haji2016multi}. 

In particular, as we will now describe, we know how to choose the index set $\cI$ 
and schedule of $\{N_\alpha\}_{\alpha \in \cI}$ 
such that the following estimator delivers the same MSE
for significantly smaller cost than the standard method of 
subsection \ref{sec:MC} 
$$
\bbE \varphi( x ) \approx \bbE_\cI^{\rm MI} \varphi( x ) := 
\sum_{\alpha \in \cI}  \bbE_\alpha^{N_\alpha} [ \Delta \varphi_\alpha( x ) ] \ ,
$$
where $\bbE_\alpha^{N_\alpha}$ indicates that $N_\alpha$ independent samples 
are used at each level $\alpha$.
A concise, but not comprehensive, summary of the approach is given in the  review \cite{giles2015multilevel}. For a detailed treatment see \cite{haji2016multi}.
MLMC corresponds to the case in which there is one index. 
The MLMC methodology is more generally applicable to problems in which the target 
distribution -- in this case the pushforward of $\pi_0$ through $u$, 
$(u)_{\#}\pi_0$,
i.e. the distribution of $u(x)$ for $x\sim \pi_0$ -- 
needs to be approximated first to finite resolution, 
$\alpha$, before Monte Carlo can be applied.


\begin{ass}\label{ass:MIMC}
There exist positive constants $s_i$, $\beta_i$, $\gamma_i$ and $C$ for $i = 1,2,...,D$, such that the following holds
\begin{itemize}
    \item[(a)] $|\bbE[\Delta\varphi_\alpha( x )]| \leq C 2^{-\langle \alpha, s \rangle}$;
    \item[(b)] $\bbE \left [ (\bbE_\alpha^{N_\alpha} [ \Delta \varphi_\alpha( X ) ] - \bbE [ \Delta  \varphi_\alpha( X ) ] )^2 \right ] \leq C N_{\alpha}^{-1}
    2^{-\langle \alpha, \beta \rangle}$;
    \item[(c)] ${\rm COST}(\Delta\varphi_\alpha( x )) \leq C 
    2^{\langle \alpha, \gamma \rangle}$.
\end{itemize}
\end{ass}
\blu{For a random variable $X$, 
the cost function ${\rm COST}(X)$ denotes the computational complexity of a single sample of $X$.}{The following two propositions are standard results for MIMC and are proven in \cite{haji2016multi}.} 


\begin{prop}\label{prop:tdMIMC}
Assume Assumption \ref{ass:MIMC}, 
with $\beta_i > \gamma_i$, for all $i = 1,...,D$.
Then for the total degree index set $\mathcal{I}_{L}:=\{\alpha \in \bbN^{D}:\sum_{i=1}^{D}\delta_{i}\alpha_{i} \leq L, \sum_{i=1}^{D}\delta_{i} = 1\}$, there are values of $L \in \bbN$, $\delta_{i} \in (0,1]$ and $\{N_\alpha\}_{\alpha \in \mathcal{I}_{L}}$ such that
\begin{equation}
    \bbE\left[\left(\sum_{\alpha\in\mathcal{I}_{L}}\bbE_\alpha^{N_\alpha} [ \Delta \varphi_\alpha( X ) ] - \bbE [\varphi( X )]\right)^2\right] < C\varepsilon^2,
\end{equation}
with a computational complexity of $\cO(\varepsilon^{-2})$ for any small $\varepsilon > 0$.
\end{prop}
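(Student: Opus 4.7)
The plan is to mirror the standard MIMC complexity proof of Haji-Ali et al., adapting it to the total degree set. I would begin with the bias-variance decomposition
\[
\bbE\!\left[\Big(\textstyle\sum_{\alpha\in\mathcal{I}_L}\bbE_\alpha^{N_\alpha}[\Delta\varphi_\alpha(X)] - \bbE[\varphi(X)]\Big)^{\!2}\right]
= \Big|\textstyle\sum_{\alpha\notin\mathcal{I}_L}\bbE[\Delta\varphi_\alpha(X)]\Big|^{2} + \textstyle\sum_{\alpha\in\mathcal{I}_L}\bbV_\alpha^{N_\alpha},
\]
where $\bbV_\alpha^{N_\alpha}$ denotes the MSE of the single-level estimator, using independence across $\alpha$ to decouple the variance. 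Controlling each term at level $\varepsilon^2/2$ will suffice.

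For the bias, I would use Assumption \ref{ass:MIMC}(a) to bound
$\big|\sum_{\alpha\notin\mathcal{I}_L}\bbE[\Delta\varphi_\alpha(X)]\big| \leq C \sum_{\alpha\notin\mathcal{I}_L} 2^{-\langle\alpha,s\rangle}$,
and then choose the weights $\delta_i$ so that the tail is dominated by a geometric series in $L$. The natural choice is $\delta_i \propto s_i$ (normalized so $\sum_i\delta_i=1$), because then $\langle\alpha,s\rangle$ is proportional to $\sum_i\delta_i\alpha_i$ along each extremal ray of $\mathcal{I}_L$, so every face of the discarded cone contributes a comparable exponentially small amount. This gives a bias bound of the form $C' L^{D-1}2^{-cL}$ for some $c>0$, and choosing $L \sim \log_2(\varepsilon^{-1})$ makes the squared bias $\leq \varepsilon^2/2$.

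For the variance term, I would apply Assumption \ref{ass:MIMC}(b) and minimize the total cost $\sum_{\alpha\in\mathcal{I}_L} N_\alpha 2^{\langle\alpha,\gamma\rangle}$ subject to $\sum_{\alpha\in\mathcal{I}_L} N_\alpha^{-1}2^{-\langle\alpha,\beta\rangle} \leq \varepsilon^2/2$ via a standard Lagrange multiplier calculation, giving the optimal schedule
\[
N_\alpha \;\propto\; \varepsilon^{-2}\, 2^{-\langle\alpha,(\beta+\gamma)/2\rangle}\;\Big(\textstyle\sum_{\alpha'\in\mathcal{I}_L} 2^{\langle \alpha',(\gamma-\beta)/2\rangle}\Big),
\]
rounded up to the nearest integer (so that $N_\alpha \geq 1$). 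Plugging this back, the total cost becomes
$\mathrm{COST} \leq C \varepsilon^{-2}\big(\sum_{\alpha\in\mathcal{I}_L} 2^{\langle\alpha,(\gamma-\beta)/2\rangle}\big)^{2}$,
and because $\beta_i>\gamma_i$ for every $i$ the inner sum over the total-degree set is bounded by a convergent geometric series uniformly in $L$. This delivers $\mathrm{COST}=\mathcal{O}(\varepsilon^{-2})$ and completes the proof.

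The main obstacle is the combinatorial bookkeeping for the bias: unlike the tensor product set in Proposition \ref{prop:ftMIMC}, the total degree set removes a simplex-shaped cone whose tail sum must be estimated carefully. The choice $\delta_i\propto s_i$ is what makes the exponentially decaying contributions from all $D$ coordinate directions align, and without it the proof would split into $D$ separate sub-tails that could degrade the complexity. The variance/cost optimization, by contrast, is a routine Cauchy--Schwarz-type calculation once the geometric series over $\mathcal{I}_L$ is shown to converge uniformly in $L$ under the condition $\beta_i>\gamma_i$.
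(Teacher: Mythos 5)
The paper does not prove Proposition~\ref{prop:tdMIMC} directly (it cites \cite{haji2016multi}), but the proof of the analogous Theorem~\ref{thm:main2} in Appendix~\ref{app:b} is the natural comparison, and your approach follows the same general template: bias--variance decomposition, integral comparison for the bias tail, Lagrange multiplier for $N_\alpha$, and a geometric-series bound on $\sum \sqrt{V_\alpha C_\alpha}$. The gap is in the cost estimate. When you round $N_\alpha$ up to an integer, you write $N_\alpha \leq 1 + \lambda\sqrt{V_\alpha/C_\alpha}$, and the resulting total cost has \emph{two} terms:
\[
{\rm COST} \;=\; \cO\!\left(\varepsilon^{-2}\Big(\textstyle\sum_{\alpha\in\mathcal{I}_L}\sqrt{V_\alpha C_\alpha}\Big)^{2} \;+\; \sum_{\alpha\in\mathcal{I}_L} C_\alpha\right).
\]
You only account for the first. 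But the second term is precisely where the total-degree index set earns its keep: for the tensor-product set in Proposition~\ref{prop:ftMIMC}, $\sum_{\alpha} C_\alpha = \cO(\varepsilon^{-\sum_j \gamma_j/s_j})$, which is why that proposition needs $\sum_j \gamma_j/s_j \leq 2$. Proposition~\ref{prop:tdMIMC} drops that hypothesis, so the $\sum_\alpha C_\alpha$ term must be re-estimated over the simplex. The paper bounds it by $\int_{\{\sum \delta_i x_i \leq L\}} e^{\log(2)\sum\delta_i^{-1}\gamma_i x_i}\,d\mathbf{x} \leq C e^{A_2 L}L^{n_2-1}$ with $A_2 = \max_i \log(2)\delta_i^{-1}\gamma_i$, and then uses $2s_i \geq \beta_i > \gamma_i$ (hence $\gamma_i/s_i < 2$, hence $A_2 < 2A_1$ with $\delta_i\propto s_i$) to conclude $\sum_\alpha C_\alpha = o(\varepsilon^{-2})$. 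Omitting this term means you have not shown anything beyond what Proposition~\ref{prop:ftMIMC} already gives, and the weaker hypothesis set is unjustified.

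A secondary, smaller point: with the polynomial factor $L^{D-1}$ in front of $2^{-cL}$, choosing $L$ exactly proportional to $\log_2(\varepsilon^{-1})$ leaves a residual $(\log\varepsilon^{-1})^{D-1}$ in the bias. The paper absorbs this with a $\log\log\varepsilon^{-1}$ correction in $L$ (Lemma~\ref{lem:ineq3}); your ``$\sim$'' implicitly hides this adjustment and should be made explicit to avoid losing logarithmic factors in the final MSE.
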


\blu{\begin{rem}
\label{prop:ftMIMC}
Under the same assumptions as in Proposition \ref{prop:tdMIMC},
if the index set is replaced with
the tensor product index set 
$\mathcal{I}_{L_1:L_d}:=\{\alpha \in \bbN^{D}:\alpha_1 \in \{0,...,L_1\},...,\alpha_D \in \{0,...,L_D\}\}$,
then the same complexity result can be obtained
only with an {\bf additional constraint} that
$\sum_{j=1}^{D}\gamma_j/ s_j \leq 2$.
\end{rem}}

\blu{
\subsection{Monte Carlo for Inference}}
\label{ssec:mcinference}

\blu{For simplicity, in this subsection, 
we define the algorithm for the target 
$\pi$, although we note that in practice this cannot be implemented for finite cost for our targets, and it must be replaced with $\pi_\alpha$.
This sets the stage for our method, which combines the inference 
approach with the approximation approach 
described in subsection \ref{sec:MC}.}

\vspace{10pt}

\subsubsection{Markov chain Monte Carlo and Importance Sampling}
\label{sssec:mcmc}

In the context of Bayesian inference, the objective is ultimately to compute expectations
with respect to a probability distribution $\pi$ proportional to 
$f>0$, where one can evaluate $f$ but not its integral, denoted 
by \blu{$Z = \int f(dx)$, so $\pi(dx) = f(dx)/Z$. 
In particular, we define $f(dx):= L(x) \pi_0(dx)$} as the target, 
in the limit $\alpha \rightarrow \infty$ of \eqref{eq:target}.
That is, for arbitrary
$\varphi:\sX \rightarrow \bbR$, 
we want to compute integrals
\begin{equation}
\label{eq:expect}
\pi(\varphi) := \int_{\sX} \varphi(x) \pi(dx) = \frac1{Z} \int_{\sX} \varphi(x) \blu{f(dx)} = 
\frac{f(\varphi)}{Z} \, .
\end{equation}
If we could simulate from $\pi$, we would approximate this by
\begin{equation}
\label{eq:iid}
\bbE^N(\varphi):= \frac1{N}\sum_{i=1}^{N} \varphi(x^{(i)}), \quad x^{(i)}\sim \pi \, .
\end{equation}
However, in the present context this is not possible because the normalizing constant $Z$ is typically unknown and must be calculated numerically.
Markov chain Monte Carlo (MCMC)
and (self-normalized) importance sampling are the standard methods 
to solve \blu{such problems} \cite{robert2013monte}.  
Both methods provide estimators  $\widehat{\varphi}^{N}$
with a dimension-independent convergence rate
analogous to $\bbE^N(\varphi)$, 
for some $C_\varphi > 0$:
$$
\| \widehat{\varphi}^{N} 
- \pi(\varphi) \|_{p}^2 \leq \frac{C_\varphi}N
$$
For MCMC, $C_\varphi$ typically depends at worst polynomially 
on $d$, and can sometimes be made independent \cite{pcn, nealpcn}. 
However, due to its intrinsic locality, MCMC is doomed to fail for 
distributions which are concentrated around several modes with low probability in between. 
In the case of importance sampling, the latter case is handled gracefully, 
however one must be careful since often
$C_\varphi = \cO(e^d)$ \cite{bengtsson2008curse, chatterjee2018sample, agapiou2017importance}.
To be precise, estimating $\int \varphi d\pi$ using samples from $\overline{\pi}$ results in
$C_\varphi = \cO( \exp(D_{KL}(\pi_\varphi \| \overline{\pi} )))$, where 
$$\pi_\varphi = \frac1{\int\varphi d\pi } \varphi \pi \, ,$$
and $D_{KL}( \nu \| \mu)$ is the Kullback-Leibler divergence from $\mu$ to $\nu$ 
\cite{chatterjee2018sample}.


If one can simulate from some $\overline{\pi}$ such that 
\blu{$\overline{\pi}(dx)=\frac1{\overline{Z}}\overline{f}(dx)$ with $\overline{Z} = \int \overline{f}(dx)$,
$\overline{f}(dx) = \overline{L}(x) \pi_0(dx)$,
 and $L/\overline{L} \leq C$}, 
then importance sampling consists of replacing the 
above unbiased approximation by the following
biased but consistent one
\blu{\begin{equation}
\label{eq:is}
\frac{\sum_{i=1}^{N_s} \varphi(x^{(i)}) 
\frac{L(x^{(i)})}{\overline{L}(x^{(i)})}}
{\sum_{i=1}^{N_s} \frac{L(x^{(i)})}{\overline{L}(x^{(i)})}}, \quad x^{(i)}\sim \overline{\pi}.
\end{equation}}

MCMC methods instead proceed by constructing a Markov chain 
$\cM: \sX \times \sigma(\sX) \rightarrow [0,1]$, 
where $\sigma(\sX)$ is the sigma algebra of measurable 
sets, such that for all $A \in \sigma(\sX)$
$$
(\pi \cM)(A) := \int_{\sX} \pi(dx') \cM(x',A) = \pi(A) \, , 
$$
i.e. the Markov chain is $\pi-$invariant.
Provided the Markov chain is also ergodic then 
one may simulate a trajectory 
and approximate \eqref{eq:expect} 
by
\begin{equation}
\label{eq:iid}
\frac1{N_s}\sum_{i=1+N_b}^{N_s+N_b} \varphi(x^{(i)}), \quad x^{(i)}\sim \cM^{(i)}(x^{(0)},\cdot) \,.
\end{equation}
Here, as above, \blu{$N_s$} is the number of samples used, while $N_b$ is the number of initial samples that are unused because we must first allow our Markov chain to approach stationarity.

The most popular MCMC method is Metropolis Hastings (MH),
which proceeds by designing a proposal Markov kernel $\cQ$
such that the following composition Markov kernel is ergodic.
\blu{
First, sample 
$x' \sim \cQ(x^{(i)}, dx') = q(x^{(i)}, x') \pi_0(dx')$, 
then let $x^{(i+1)} = x'$ with probability
\begin{equation}\label{MH}
	\min\left \{1, 
\frac{L(x')q(x', x^{(i)}) }{L(x^{(i)})q(x^{(i)}, x' )} 
\right\} \, . 
\end{equation}}
Otherwise, let $x^{(i+1)} = x^{(i)}$. 
Notice that again, as in \eqref{eq:is}, 
only the un-normalized target density $L$ is required.
\blu{
Note that in order to customize the presentation to the context
at hand, we presented a particular category of MH methods,
designed for probability measures on general state
spaces, which have 
densities with respect to 
$\pi_0$.
Such methods are justified by the framework of
\cite{tierney}, 
and a particularly convenient instantiation arises 
for Gaussian process priors $\pi_0$,
where it is easy to define $\cQ$ such that
$q(x,x')=q(x',x)$ for all $x,x' \in \sX$. 
See \cite{nealpcn,pcn},
and the more recent 
slice sampler variant \cite{murray2010elliptical}.}
 

Sequential Monte Carlo samplers 
combine these 2 fundamental algorithms -- 
importance sampling, and propagation by MCMC
kernels -- 
along a sequence of intermediate targets, and are able to achieve 
some very impressive results.
The next subsection introduces this technology.

\subsubsection{Sequential Monte Carlo samplers}
\label{sec:smc}

Sequential Monte Carlo (SMC) samplers are able to merge the best of both worlds, by repeatedly leveraging 
 importance sampling on a sequence of target distributions which
are close. 
In particular, define
$h_1, \dots, h_{J-1}$ by \blu{$h_j = L_{j+1}/L_j$, 
where $L_1 = \overline{L}$, $L_J = L$, $\overline{L}$ appears in \eqref{eq:is} 
(and may likely be $\pi_0$), 
$f(dx)=L(x)\pi_0(dx)$ is the un-normalized target, 
and for $j=2,\dots, J-1$, $L_i$ \blu{interpolates} in between.}

Let $\pi_j = f_j/Z_j$, where \blu{$Z_j = \int f_j(dx)$ and 
$f_j(dx) = L_j(x) \pi_0(dx)$.
Note that $\overline{f}(dx) \prod_{i=1}^{J-1} h_j(x) = f(dx)$}. 
The idea of SMC is to simulate from $\overline{\pi}=\pi_1$ and use these
samples to construct a self-normalized importance sampling estimator of $f_2$ with weights $h_1$
as in \eqref{eq:is}, and iterate for $j=1,\dots, J-1$, 
resulting in a self-normalized importance sampling estimator of $\pi$.
There is however an obvious issue with this idea. 
{In particular, 
the locations of the sampled points \blu{remain} unchanged over each stage of the algorithm for this sequential importance sampling 
estimator. This leads to degeneracy that is no better than the original (one step) importance sampling estimator \eqref{eq:is}.}

The key idea introduced in \cite{jarzynski1997nonequilibrium, neal2001annealed, chopin2002sequential, del2006sequential}
is to use Markov transition kernels between successive target distributions $\pi_j$ and $\pi_{j+1}$
in order to decorrelate (or ``jitter'') the particles, 
while preserving the intermediate target.
The standard approach is to let $\cM_j$ for $j=2,\dots,J$
be such that $(\pi_j \cM_j)(dx) = \int \pi_j(dx') \cM_j(x',dx) = \pi_j(dx)$, 
e.g. an MCMC kernel of the type introduced in the previous 
subsection, \eqref{MH}. 

The resulting SMC algorithm is given in Algorithm \ref{algo:smc}.
Define 
\begin{equation}\label{eq:empirical}
\pi_j^N(\varphi) := \frac1{N} \sum_{i=1}^N \varphi(x_j^{(i)}) \, .
\end{equation}

\begin{algorithm}[h]
\caption{SMC sampler} 
Let $x_1^{(i)} \sim \pi_1$ for $i=1,\dots, N$, and $Z^N_1=1$.
For $j=2,\dots,J$, repeat the following steps: 
\begin{itemize}
\item[(0)] Store \blu{$Z_j^N = Z_{j-1}^N \frac{1}{N}\sum_{k=1}^N h_{j-1}(x_{j-1}^{(k)})$.}
\item[(i)] Define 
$w_j^i = h_{j-1}(x_{j-1}^{(i)}) / \sum_{k=1}^N h_{j-1}(x_{j-1}^{(k)})$, {for $i=1,\dots, N$}.
\item[(ii)] Resample. Select $I_j^i \sim \{ w_j^1, \dots, w_j^N\}$, and let $\hat x_j^{(i)} = x_{j-1}^{(I_j^i)}$, {for $i=1,\dots, N$}.
\item[(iii)] Mutate. Draw $x_j^{(i)} \sim \cM_j(\hat x_j^{(i)},\cdot)$, {for $i=1,\dots, N$}.
\end{itemize}\label{algo:smc}
\end{algorithm}

\blu{In the resampling step of Algorithm \ref{algo:smc}, the samples are resampled according to their weights, so that some ``unfit'' (low weight) 
particles will ``die'' whilst other ``fit'' (high weight) ones will ``multiply''. 
As such, it can be viewed as a sort of genetic selection mechanism \cite{del2004feynman}.
One can understand this operation as preserving the distribution
of particles as well as the degeneracy of the sample, 
while exchanging variance of weights for redundancy of particles.
Therefore, at a given instance, there is no net gain, 
however future generations will have replenished diversity.
As an example, one can use multinomial resampling. 
See \cite{chopin2020introduction} for details.}



\subsubsection{Estimating the normalizing constant with SMC}
\label{sec:noco}

Recall \blu{$Z_j = \int f_j(dx)$, and observe that 
$$
\pi_j(h_j) = \frac{1}{Z_j} \int \frac{L_{j+1}(x)}{L_{j}(x)} f_{j}(dx)  = \frac{Z_{j+1}}{Z_j} \, .
$$ }
It follows that
the ratio of normalizing constants of $\pi_J=\pi$ to $\pi_1=\overline{\pi}$, $Z/\overline{Z}$,
is given by 
$$
\frac{Z_J}{Z_1} = \prod_{j=1}^{J-1} \pi_{j}(h_{j})\, .
$$
If $Z_1=1$ is known, then this is simply equal to $Z$, 
the normalizing constant of $\pi$.

Observe that using Algorithm \ref{algo:smc} 
we can construct an estimator of each factor by 
$$
\pi^N_{j}(h_{j}) = \frac1N \sum_{i=1}^N h_j(x_j^{(i)}) \, . 
$$
Recall that for any $\varphi:\sX \rightarrow \bbR$ we have  
\blu{$f_J(\varphi) := \int \varphi(x) f_J(dx) = 
f_J(1) \pi_J(\varphi)$, by definition.}
Now define the following estimator
\begin{equation}\label{eq:unbiased}
 f_J^N(\varphi) := \prod_{j=1}^{J-1} \pi^N_{j}(h_{j}) \pi_J^N(\varphi) 
 = Z_J^N \pi_J^N(\varphi) \, . 
\end{equation}
where \blu{$\pi_j^N(\varphi)$} and $Z_J^N$ are as defined in Algorithm \ref{algo:smc}.

Note that by definition
$$
\pi_J^N(\varphi)  = \frac{f_J^N(\varphi)}{Z_J^N} = \frac{f_J^N(\varphi)}{f_J^N(1)} \, .
$$




\section{Multi-index sequential Monte Carlo}
\label{sec:MISMC}

{With the necessary notation and concepts defined in the previous section, we now establish our theoretical results for Multi-Index Sequential Monte-Carlo. 
Through this we can provide theoretical guarantees for the Bayesian inverse problems, 
such as those defined in 
subsection \ref{sec:bip} and we develop methods which  
apply the MIMC methodology of subsection \ref{sec:MIMC} to that problem.}

{The main result is an estimator which retains the well-known 
efficiency of SMC samplers while provably achieving the complexity 
benefits of MIMC. \blu{This problem has been considered
before in \cite{ourmimc, ourmismc1, ourmismc2}, but the present work is the first to establish convergence guarantees under 
reasonable verifiable assumptions.} To this end,
our objective is to apply SMC samplers
to estimate \eqref{eq:expect} while utilizing a multi-index decomposition of the form
\eqref{eq:mimc}. }

{After formulating our problem and introducing some additional notation, we present and prove our main convergence result Theorem \ref{thm:main}.}

\subsection{Formulation}

For convenience we denote the vector of multi-indices 
{$$\bm{\alpha}(\alpha) := (\ba_1(\alpha),\dots, \ba_{2^D}(\alpha)) \in \bbZ_+^{D\times2^D} \, ,$$}
where $\ba_1(\alpha)=\alpha$, $\ba_{2^D}(\alpha)=\alpha - \sum_{i=1}^D e_i$, 
and $\ba_i(\alpha)$ for $1<i<2^D$ correspond to the intermediate multi-indices
involved in computing $\Delta \varphi_\alpha$, as described above \eqref{eq:expect}. 
We note that when $\alpha$ is on the boundary of $\bbZ^D_+$ then
several of the terms involved in $\Delta \varphi_\alpha$ are 0, 
but we find this notation more expedient
than letting $\bm{\alpha}(\alpha) \in \bbZ_+^{D\times k_\alpha}$
where $k_\alpha=2^{\#\{i; \alpha_i\neq 0\}} \in \{0,2,\dots, 2^D\}$ 
adjusts the dimension $k_\alpha$ when $\alpha$ is on the boundary of the index set.

Define 
\blu{$f_\alpha(dx) := L_\alpha(x) \pi_0(dx)$, 
$Z_\alpha := \int_\sX f_\alpha(dx)$ and 
$\pi_{\alpha}(dx) = f_\alpha(dx) / Z_{\alpha}$}, following from \eqref{eq:target}.
There are 2 fundamental strategies one may adopt for estimating
$\pi(\varphi) = f(\varphi)/f(1)$ using a multi-telescoping identity as in \eqref{eq:mimc}. 
{The first considers the following representation
\begin{equation}\label{eq:sn_increment}
\pi(\varphi) = \sum_{\alpha\in \bbZ_+^D} \Delta( \pi_\alpha(\varphi_\alpha) ) = 
\sum_{\alpha\in \bbZ_+^D} \Delta\left ( \frac1{Z_\alpha} f_\alpha(\varphi_\alpha) \right) \, .
\end{equation}}
{Note we allow $\varphi_\alpha$ to depend on $\alpha$ -- 
for example it could involve the solution to the PDE.}

{Directly estimating $\Delta (\pi_\alpha(\varphi_\alpha))$} 
would be quite natural if we were able to sample from a coupling of
$(\pi_{\ba_1(\alpha)},\dots, \pi_{\ba_{2^D}(\alpha)})$
i.e. a distribution $\Pi_\alpha: \sigma(\sX^{2^D}) \rightarrow [0,1]$ such that 
$$
\blu{\int_{\bx_{-j} \in \sX^{2^D-1}} \Pi_\alpha(d\bx)  = 
\pi_{\ba_j(\alpha)}(d\bx_j)} \, ,\qquad\text{for }j=1,\dots, 2^D. \footnote{Here $\bx_{-j}$ omits the $j^{\rm th}$ coordinate from $\bx = (\bx_1,\dots, \bx_{2^D}) \in \sX^{2^D}$} $$ 
In practice, however, this is non-trivial to achieve. 
One successful strategy for MLMC methods
is to construct instead an approximate coupling $\Pi_\alpha$ 
such that $\pi_{\ba_i(\alpha)}/\Pi_\alpha$ is bounded for all $i=1,\dots, 2^D$,
then simulate from this and construct self-normalized importance sampling 
estimators of the type \eqref{eq:is} for each of the individual summands of 
$\Delta\left ( \frac1{Z_\alpha} f_\alpha(\varphi_\alpha) \right)$
appearing in \eqref{eq:sn_increment}.
This strategy was introduced for MLMCMC in \cite{ourac} and
has subsequently been applied to MIMC in the contexts of 
MCMC \cite{ourmimc} and SMC \cite{ourmismc1, ourmismc2}.
These MIMC works lack rigorous convergence results,
due to the challenge of achieving rigorous rates for the individual summands,
as well as the effect of cumbersome off-diagonal terms in the MSE estimates
arising from bias of the summands (which are higher-order for MLMC). 
Both of these issues are handled elegantly with the present method.

In the present work, we adopt the second fundamental strategy, which is 
to use the ratio decomposition
\begin{equation}\label{eq:increments_ratio}
{\pi(\varphi) = \frac{f(\varphi)}{f(1)} = 
\frac{\sum_{\alpha\in \bbZ_+^D} \Delta( f_\alpha(\varphi_\alpha) )} 
{\sum_{\alpha\in \bbZ_+^D} \Delta (f_\alpha(1))} \, .}
\end{equation}
In their limiting forms in \eqref{eq:sn_increment} and \eqref{eq:increments_ratio},
the expressions are equivalent, however from an approximation perspective they
are fundamentally different.
In the context of SMC, there are advantages to the latter. 
In particular, this alleviates both of the issues with arising 
{from bias of the summands} in the aforementioned approach,
which have prevented rigorous convergence results until now.
A similar strategy was used to construct randomized MLMC estimators for 
Bayesian parameter estimation with particle MCMC in \cite{vihola}.
This method comprises the main result of this work,
and its development is the topic of the following subsection.

\subsection{Main result}
\label{sec:main}

\blu{In order to make use of \eqref{eq:increments_ratio} we need to construct estimators of 
$\Delta( f_\alpha(\zeta_\alpha) )$, both for our quantity of interest 
$\zeta_\alpha = \varphi_\alpha$ and for $\zeta_\alpha=1$.
To that end we shall construct a coupling which approximates 
$\Pi_\alpha$, and has well-behaved importance weights 
with respect to $\Pi_\alpha$.}
Let 
\begin{equation}\label{eq:priorcoupling}
\Pi_0(d\bx) = \pi_0(d\bx_1) \prod_{i=2}^{2^D} \delta_{\bx_1}(d \bx_i) \, ,
\end{equation}
where $\delta_{\bx_1}$ denotes the Dirac delta function at $\bx_1$.
Note that this is an exact coupling of the prior in the sense that 
for any $j \in \{1,\dots, 2^D\}$
\blu{\begin{equation}\label{eq:priormarg}
\int_{\bx_{-j} \in \sX^{2^D-1}} 
\Pi_0(d \bx) = \pi_{0}(d\bx_j) \, .
\end{equation} }
Indeed it is the same coupling used in subsection \ref{sec:MIMC}.
It is hoped that this coupling of the prior will carry over to provide error estimates analogous to \eqref{eq:goodcoupling}
for the estimator \eqref{eq:increments_ratio}, when computed using SMC.
{We note that one can estimate \eqref{eq:increments_ratio} directly by 
importance sampling with respect to the prior, as 
described in subsection \ref{sec:MIMC}, however this is not expected to be as efficient as using SMC.} 
We hence adapt Algorithm \ref{algo:smc} to an extended target which is an approximate coupling 
of the actual target as in \cite{ourac, ourmimc, ourmismc1, ourmismc2, vihola}, 
and utilize a ratio of estimates analogous to \eqref{eq:unbiased}, similar to what was done in 
\cite{vihola}.
To this end, we define a likelihood on the coupled space as
\begin{equation}\label{eq:coupledlike}
{\bf L}_\alpha(\bx) = \max \{ 
L_{\ba_1(\alpha)}(\bx_1), \dots, L_{\ba_{k_\alpha}(\alpha)}(\bx_{k_\alpha}) \}  \, .
\end{equation}
{Note that $k_\alpha=2^{\#\{i; \alpha_i\neq 0\}} \in \{0,2,\dots, 2^D\} \leq 2^D$ 
adjusts the effective dimension of the target when $\alpha$ 
is on the boundary of the index set.
The approximate coupling is defined by
\blu{\begin{equation}\label{eq:approxcoupling}
F_\alpha(d\bx) = {\bf L}_\alpha(\bx) \Pi_0(d\bx) \, , \qquad 
\Pi_\alpha(d\bx) = \frac1{F_\alpha(1)} F_\alpha(d\bx) \, .
\end{equation}}

\begin{examp}[Approximate Coupling]
As an example of the approximate coupling constructed in equations
\eqref{eq:priorcoupling}, \eqref{eq:coupledlike}, and 
\eqref{eq:approxcoupling}, let $D=2$, $d=1$, and $\alpha=(1,1)$.
Then we have
$$
\Pi_{(1,1)}(x_1,x_2,x_3,x_4) \propto 
\max\{L_{11}(x_4), L_{10}(x_3), L_{01}(x_2), L_{00}(x_1)\} 
\pi_0(x_1) \delta_{x_1}(x_2) \delta_{x_1}(x_3) \delta_{x_1}(x_4) \, .
$$
Note that for our choice of prior coupling \eqref{eq:priorcoupling},
we effectively have a single
distribution 
$$
\Pi_{(1,1)}(x) \propto 
\max\{L_{11}(x), L_{10}(x), L_{01}(x), L_{00}(x)\} 
\pi_0(x) \, ,
$$
but 
any coupling of the prior which preserves the marginals as in 
\eqref{eq:priormarg} is admissible,
so we prefer to consider this as a target on the ``diagonal hyperplane'' 
$x_1=x_2=x_3=x_4$, as above.
\end{examp}}

Let $H_{\alpha,j} = {\bf L}_{\alpha, j+1}/{\bf L}_{\alpha,j}$ for some intermediate distributions
$F_{\alpha,1}, \dots, F_{\alpha,J}=F_{\alpha}$.
In our case, we use the natural intermediate targets
$F_{\alpha,j}(d\bx) = {\bf L}_\alpha(\bx)^{\tau_j} \Pi_0(d\bx)$,
{where $\tau_1=0$, $\tau_j<\tau_{j+1}$, and $\tau_J=1$}.
 For $j=1,\dots,J$, we define 
\[
\Pi_{\alpha,j}(d\bx) = \frac1{F_{\alpha,j}(1)} F_{\alpha,j}(d\bx) 
\]
and 
we let $\bm{\cM}_{\alpha,j}$ be a Markov transition kernel 
 such that $(\Pi_{\alpha,j} \bm{\cM}_{\alpha,j})(d\bx) = \Pi_{\alpha,j}(d\bx)$, 
analogous to $\cM$ in subsection \ref{sec:smc}. 
\blu{
Any MCMC kernel  
as described in subsection \ref{sssec:mcmc}
with target distribution $\Pi_{\alpha,j}$
is suitable for this purpose. 
An example 
is the Metropolis-Hastings kernel described above and in \eqref{MH}.}

\blu{
\begin{rem}[Tempering]
\label{rem:temp}
Tempering accurately is crucial, because 
if the effective sample size drops too low, then the population 
will lack sufficient diversity to survive. 
The purpose of the sequential resampling and mutation is precisely
to preserve diversity in the sample.
Sometimes a fixed tempering schedule is suitable for this purpose, 
for example $\tau_j = (j-1)/(J-1)$. 
An alternative is to use an adaptive tempering strategy. 
Given 
a (possibly un-normalized) 
weighted sample $\{w^{(k)},\bx^{(k)}\}_{k=1}^N$,
the {\em effective sample size} (${\rm ESS}$) is 
defined as follows
\begin{equation*}
   {\rm ESS} = \frac{\left(\sum_{k=1}^{N}w^{(k)} \right)^2}{\sum_{k=1}^{N}(w^{(k)})^{2}} 
\end{equation*}
This quantity serves as a proxy for the variance of the weighted sample.
To understand the name, note that if $w^{(k)}\propto 1$ for all $k$, 
then ${\rm ESS}=N$, while if $w^{(k^*)}\propto 1$ for some $k^*$ and 
$w^{(k)}=0$ for $k \neq k^*$ then ${\rm ESS}=1$.
If $\tau_j = \tau_{j-1} + h$, for $h > 0$, then the intermediate weights will be 
$w^{(k)} = {\bold{ L}}_\alpha(\bx^{(k)})^{h}$,
and the corresponding ${\rm ESS}(h)$ is a scalar function of $h$ 
which quantifies the sample attrition that results from the 
importance sampling step; 
precisely what we are aiming to control.
The adaptive tempering parameter $\tau_j$ 
is computed by firstly solving  ${\rm ESS}(h)={\rm ESS_{min}}$
 with a pre-specified value of ${\rm ESS_{min}}$,
and then letting $\tau_j \leftarrow \tau_{j-1} + h$. 
In this way, the effective sample size is ${\rm ESS_{min}}$ 
each time importance sampling is carried out. 
The tempering procedure is carried until $\tau_j = 1$.
\end{rem}}

\blu{
\begin{rem}[Role of Dimension $D$]
\label{rem:dim}
Note that in high-dimensions 
one would select an index set in which there are few (or no) 
terms on the interior. 
In the present work, we 
do not explicitly consider the dependence on $D$ 
(which is reasonable for small $D\leq 5$ say),
however the methodology is applicable for high-dimensional targets and 
that is the subject of future work. 
The cost of simulating the approximate coupling at 
level $\alpha$ will feature a constant $2^D$
multiplying Assumption \ref{ass:rate}(C), 
because that is how many likelihood evaluations are required
to compute \eqref{eq:coupledlike},
and hence corresponding multi-increment.
The constant can be large, but this will not alter the complexity estimates.
\end{rem}}

\begin{algorithm}[h!]
\caption{SMC sampler for coupled estimation of $\Delta( f_\alpha(\zeta_\alpha) )$}
Let $\bx_1^{(i)} \sim \pi_1$ for $i=1,\dots, N$, $\bZ^N_1=1$, and $\omega_{1,k}=1$ for $k=1,\dots, 2^D$.
For $j=2,\dots,J$, $k=1,\dots, 2^D$, repeat the following steps for $i=1,\dots, N$:
\begin{itemize}
\item[(0)] Store $\bZ_j^N = \bZ_{j-1}^N \frac1N \sum_{k=1}^N H_{\alpha,j-1}(\bx_{j-1}^{(k)})$.
\item[(i)] Define 
$w_j^i = H_{\alpha,j-1}(\bx_{j-1}^{(i)}) / \sum_{k=1}^N H_{\alpha,j-1}(\bx_{j-1}^{(k)})$.
\item[(ii)] Resample. Select $I_j^i \sim \{ w_j^1, \dots, w_j^N\}$, and let $\hat \bx_j^{(i)} = \bx_{j-1}^{(I_j^i)}$.
\item[(iii)] Mutate. Draw $\bx_j^{(i)} \sim \blu{\bm{\cM}_{\alpha,j}(\hat \bx_j^{(i)},\cdot)}$.
\end{itemize}\label{algo:smccoup}
\end{algorithm}

For $j=1,\dots, J$, and for random variables $\bx_j^{(i)}$, $i=1,...,N$ (which will be sampled from the Markov chain $\blu{\bm{\cM}_{\alpha,j}}$) we
 define 
\begin{equation}\label{eq:nocoup}
\Pi^N_{\alpha,j}(d\bx) := \frac1N \sum_{i=1}^N \delta_{\bx_j^{(i)}}(d\bx) \, , 
\end{equation}
and then define
\begin{equation}\label{eq:unnocoup}
\bZ_{\alpha}^N := \prod_{j=1}^{J-1} \Pi^N_{\alpha,j}( H_{\alpha,j} ) \, , \quad  
F^N_\alpha(d\bx) := \bZ_\alpha^N \Pi^N_{\alpha,J}(d\bx) \, .
\end{equation}

We require the following assumption
\begin{ass} \label{ass:G1}
Let $J\in\mathbb{N}$ be given\blu{, and let $\sX$ be a Banach space}. For each $j\in\{1,\dots,J\}$ there exists some $C>0$ such that for all 
$(\alpha,\bx)\in\blu{\mathbb{Z}_+^D\times \sX^{2^D}}$ 
$$
C^{-1} < Z, \blu{H_{\alpha,j}(\bx), {\bf L}_\alpha(\bx)} \leq C \, .
$$
\end{ass}

\blu{The following proposition can easily be deduced from [Theorem 7.4.2, \cite{del2004feynman}].
\begin{prop}\label{prop:unnocoupled}
Under Assumption \ref{ass:G1} 
we have 
$
\bbE [F_{\alpha}^N(\psi)] = F_\alpha(\psi)$.
\end{prop}}

Define 
\begin{equation}\label{eq:psidef}
\psi_{\zeta_\alpha}(\bx) := \sum_{k=1}^{2^D} \iota_k \omega_k(\bx) \zeta_{\ba_k(\alpha)}(\bx_k) \, , \qquad 
\omega_k(\bx) := \frac{L_{\ba_k(\alpha)}\left (\bx_{k} \right)}
{{\bf L}_{\alpha}(\bx)} \, ,
\end{equation}
where $\iota_k \in \{-1,1\}$ is the sign of the $k^{\rm th}$ term in 
$\Delta f_\alpha$ \blu{and $\zeta_\alpha: \sX \rightarrow \bbR$.} 
Following from Proposition \ref{prop:unnocoupled} we have that
\begin{equation}\label{eq:coupledbias}
\bbE[F_\alpha^N(\psi_{\zeta_\alpha})] = F_\alpha(\psi_{\zeta_\alpha}) = \Delta f_\alpha(\zeta_\alpha) \, .
\end{equation}

Now given $\cI \subseteq \bbZ_+^D$, $\{N_\alpha\}_{\alpha \in \cI}$, 
and $\varphi :\sX \rightarrow \bbR$,
for each $\alpha \in \cI$, 
run an independent SMC sampler as in Algorithm \ref{algo:smccoup} 
with $N_\alpha$ samples, \blu{define $Z_\alpha^N = Z_J^N$},  
and define the MIMC estimator as 
\begin{equation}\label{eq:mismc}
\widehat\varphi^{\rm MI}_\cI = \frac{\sum_{\alpha \in \cI} F^{N_\alpha}_\alpha(\psi_{\varphi_\alpha})}
{\max\{\sum_{\alpha \in \cI} F^{N_\alpha}_\alpha(\psi_{1}), Z_{\min}\}} \, ,
\end{equation}
where $Z_{\rm min}$ is a lower bound on $Z$
as given in Assumption \ref{ass:G1}, 
\blu{and $F^{N_\alpha}_\alpha$ is defined in \eqref{eq:unnocoup}}.


\subsubsection{Theoretical results}
\label{eq:theory}

Throughout this subsection $C>0$ is a constant whose value may change from line to line. 
The following Theorem is the main theoretical result which underpins the results to follow.

\blu{\begin{theorem}\label{thm:main}
Assume Assumption \ref{ass:G1}. 
Then for any $J\in\mathbb{N}$ there exists a $C>0$ such that for any $N\in\mathbb{N}$, $\psi : \sX^{2^D} \rightarrow \bbR$ bounded and
measurable and $\alpha\in\mathbb{Z}_+^D$
$$
\bbE\left[| F_\alpha^N(\psi) - F_\alpha(\psi) |^2\right] \leq \frac{C}{N} F_\alpha(\psi^2).
$$
Furthermore, 
$$
F_\alpha(\psi_{\zeta_\alpha}^2) \leq 
C \int_\sX (\Delta( L_\alpha(x)\zeta_\alpha(x) ))^2 \pi_0(dx) \, ,
$$
where $\psi_{\zeta_\alpha}(\bx)$ is as \eqref{eq:psidef}.
\end{theorem}}
\begin{proof} 
The first 
result follows from Lemmas \ref{lem:mrt}, \ref{lem:qp} and \ref{lem:induc},
given in Section \ref{app:a}. 
The second result is derived as follows
\begin{eqnarray}\nonumber
F_\alpha(\psi_{\zeta_\alpha}^2) &=& \int_{\sX^{2^D}} \left ( \sum_{k=1}^{2^D} \iota_k \frac{L_{\ba_k(\alpha)}(\bx_{k} ) }
{{\bf L}_{\alpha}(\bx)} \zeta_{\ba_k(\alpha)}(\bx_k)  \right)^2
{\bf L}_\alpha(\bx) \Pi_0(d\bx) \\  \nonumber
&=& \int_{\sX^{2^D}} \frac1{{\bf L}_{\alpha}(\bx)} \left ( \sum_{k=1}^{2^D} \iota_k 
L_{\ba_k(\alpha)}(\bx_{k} ) \zeta_{\ba_k(\alpha)}(\bx_k) \right)^2 \Pi_0(d\bx) \\ \nonumber
&\leq & C \int_{\sX} (\Delta ( L_\alpha(x) \zeta_\alpha(x) ) )^2 \pi_0(dx) \, .
\end{eqnarray}
The first 2 lines are direct substitution and the inequality follows by defining  
$C^{-1} = \inf_{\bx \in \sX^{2^D}} {\bf L}_\alpha(\bx)$ and using the definition of $\Pi_0$ in \eqref{eq:priorcoupling}.
\end{proof}


%

Following from above, the assumptions below will be made.

\begin{ass}\label{ass:rate}
For any $\zeta: \sX \rightarrow \bbR$ bounded and Lipschitz,
there exist $C, \beta_i, s_i, \gamma_i >0$ for $i=1,\dots, D$ such that for 
resolution 
vector
$(2^{-\alpha_1},\dots, 2^{-\alpha_D})$, i.e. resolution 
$2^{-\alpha_i}$ in the
$i^{\rm th}$ direction, the following holds
\begin{itemize}
\item[(B)] \blu{$|\Delta f_\alpha(\zeta)|$} 
{$=: B_\alpha$} $\leq C 2^{-\langle \alpha, s \rangle}$;
\item[(V)] $\int_\sX (\Delta( L_\alpha(x)\zeta_\alpha(x) ))^2 \pi_0(dx) 
=: V_\alpha \leq C 2^{-\langle \alpha, \beta \rangle}$;
\item[(C)] ${\rm COST}(F_\alpha(\psi_\varphi)) =: C_\alpha \propto 
2^{\langle \alpha, \gamma \rangle}$.
\end{itemize}
\end{ass}

The proofs of the main Theorems will rely on one more result,
Lemma \ref{prop:MSEbound}, given immediately afterwards.

The next theorem comprises the main result of the paper.
%

\begin{theorem}\label{thm:main2}
Assume Assumptions \ref{ass:G1} and \ref{ass:rate}, with $\beta_i>\gamma_i$ for $i=1,\dots,D$. 
Then for any $\varepsilon>0$ and suitable $\varphi: \sX \rightarrow \bbR$, 
it is possible to choose a total degree index set $\mathcal{I}_{L}:=\{\alpha \in \bbN^{D}:\sum_{i=1}^{D}\delta_{i}\alpha_{i} \leq L, \sum_{i=1}^{D}\delta_{i} = 1\}$, $\delta_{i} \in (0,1]$ and $\{N_\alpha\}_{\alpha \in \mathcal{I}_{L}}$, such that for some $C>0$ 
$$
\bbE [ ( \widehat \varphi^{\rm MI}_\cI - \pi(\varphi) )^2 ] \leq C \varepsilon^2 \, ,
$$
and ${\rm COST}(\widehat \varphi^{\rm MI}_\cI) \leq C \varepsilon^{-2}$, the canonical rate.
The estimator $\widehat \varphi^{\rm MI}_\cI$ is defined in equation \eqref{eq:mismc}.
\end{theorem}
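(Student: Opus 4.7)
The plan is to follow the proof of Theorem \ref{thm:main1} essentially verbatim but replace the tensor product index set with the total degree index set, mirroring the relationship between Propositions \ref{prop:ftMIMC} and \ref{prop:tdMIMC}. First I would invoke Lemma \ref{prop:MSEbound} to decompose the MSE of the ratio estimator \eqref{eq:mismc} into the squared bias and the variance of the numerator $\sum_{\alpha \in \mathcal{I}_L} F^{N_\alpha}_\alpha(\psi_{\varphi_\alpha})$ and denominator $\sum_{\alpha \in \mathcal{I}_L} F^{N_\alpha}_\alpha(\psi_{1})$; the uniform lower bound $Z \geq Z_{\min}$ from Assumption \ref{ass:G1} (together with the $\max$ in \eqref{eq:mismc}) lets these be combined into a single bound of the form $C(\mathrm{Bias}^2 + \mathrm{Var})$.

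Next I would estimate each piece using the telescoping identity \eqref{eq:increments_ratio}. The truncation bias equals $\sum_{\alpha \notin \mathcal{I}_L} \Delta f_\alpha(\zeta)$ for $\zeta \in \{\varphi, 1\}$, which by Assumption \ref{ass:rate}(B) is bounded by $C \sum_{\alpha \notin \mathcal{I}_L} 2^{-\langle \alpha, s\rangle}$. For the variance, Theorem \ref{thm:main} combined with Lemma \ref{lem:main}, Assumption \ref{ass:rate}(V), and the across-level independence of the SMC samplers yields
$$
\sum_{\alpha \in \mathcal{I}_L} C \, N_\alpha^{-1} V_\alpha \leq C \sum_{\alpha \in \mathcal{I}_L} N_\alpha^{-1} 2^{-\langle \alpha, \beta\rangle}.
$$

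Then I would choose $\{\delta_i\}_{i=1}^{D}$ from the normalized simplex based on the ratios of the exponents $s_i, \beta_i, \gamma_i$ (following the standard total degree prescription of \cite{haji2016multi}) so that the bias sum outside $\mathcal{I}_L$ decays exponentially in $L$. A Lagrange multiplier calculation minimising the cost $\sum_{\alpha \in \mathcal{I}_L} N_\alpha 2^{\langle \alpha, \gamma\rangle}$ subject to a variance constraint of order $\varepsilon^2$ yields the standard optimal allocation $N_\alpha \propto 2^{-\langle \alpha, (\beta+\gamma)/2\rangle}$ (up to a normalisation depending on $L$). Choosing $L = O(\log \varepsilon^{-1})$ then forces the squared bias and variance each to be $O(\varepsilon^2)$.

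The main obstacle will be the combinatorial summation over the total degree simplex: namely, showing that, under $\beta_i > \gamma_i$ alone and \emph{without} the extra constraint $\sum_j \gamma_j/s_j \leq 2$ required in Theorem \ref{thm:main1}, the resulting cost sum $\sum_{\alpha \in \mathcal{I}_L} 2^{\langle \alpha, (\gamma-\beta)/2\rangle}$ is bounded uniformly in $L$ (possibly up to a polylogarithmic factor that can be absorbed). This is precisely the well-known advantage of the total degree index set over the tensor product construction and follows from the simplex/sparse-grid summation arguments of \cite{haji2016multi,bungartz2004sparse}. Once this geometric fact is established, the canonical $O(\varepsilon^{-2})$ complexity follows exactly as in Theorem \ref{thm:main1}; the detailed verification is deferred to Appendix \ref{app:b}.
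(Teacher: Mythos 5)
Your overall plan matches the paper's proof in Appendix~\ref{app:b}: start from Lemma~\ref{prop:MSEbound}, use Assumption~\ref{ass:rate}(B) for the bias tail, use Theorem~\ref{thm:main} plus Lemma~\ref{lem:main} plus Assumption~\ref{ass:rate}(V) for the variance, allocate $N_\alpha$ by a Lagrange multiplier (giving exactly $N_\alpha \propto 2^{-\langle\alpha,(\beta+\gamma)/2\rangle}$ as you state), take $L = \mathcal{O}(\log\varepsilon^{-1})$ with a polylogarithmic correction, and invoke the integral bounds over the simplex and its complement (Lemmas~\ref{lem:ineq1}--\ref{lem:ineq3}).

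However, you misattribute where the total degree set earns its keep. The term you single out, $\sum_{\alpha\in\mathcal{I}_L}\sqrt{V_\alpha C_\alpha}=\sum_{\alpha\in\mathcal{I}_L}2^{\langle\alpha,(\gamma-\beta)/2\rangle}$, is bounded uniformly in $L$ \emph{for any} index set as soon as $\beta_i>\gamma_i$: it is dominated by the full tensor sum $\prod_{i=1}^D(1-2^{(\gamma_i-\beta_i)/2})^{-1}<\infty$. That is not where the constraint $\sum_j\gamma_j/s_j\leq 2$ enters in Theorem~\ref{thm:main1}. The constraint arises from the second cost contribution, $\sum_{\alpha\in\mathcal{I}}C_\alpha$, which comes from requiring at least one sample per index. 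For the tensor product set this scales like $\prod_i 2^{L_i\gamma_i}\sim\varepsilon^{-\sum_i\gamma_i/s_i}$, whence the constraint; for the total degree set (with $\delta_i\propto s_i$), Lemma~\ref{lem:ineq2} gives $\sum_{\alpha\in\mathcal{I}_L}C_\alpha\lesssim e^{A_2 L}L^{n_2-1}$ with $A_2=\max_i\log(2)\delta_i^{-1}\gamma_i$, which converts the requirement into the milder $\max_i\gamma_i/s_i\leq 2$ (and the paper then uses $2s_i\geq\beta_i>\gamma_i$ to get this). Your sketch would go through once this is corrected, but as written it focuses the argument on a term that poses no difficulty and leaves the genuinely delicate term unaddressed.
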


\begin{proof}
Starting from Lemma \ref{prop:MSEbound} and given Theorem \ref{thm:main}, 
and the Assumptions \ref{ass:rate},  
the result follows in a similar fashion to standard MIMC theory 
\cite{haji2016multi, giles2015multilevel, ourmimc, ourmismc1, ourmismc2}. 
The proof is given in Appendix \ref{app:b} for completeness.
\end{proof}

\blu{\begin{rem}
\label{thm:main1}
Under the same assumptions as in Theorem \ref{thm:main2}, 
and similar to Proposition \ref{prop:tdMIMC},
if the index set is replaced with the tensor product index set 
$\mathcal{I}_{L_1:L_d}:=\{\alpha \in \bbN^{D}:\alpha_1 \in \{0,...,L_1\},...,\alpha_D \in \{0,...,L_D\}\}$,
then the same complexity result can be obtained
only with an {\bf additional constraint} that
$\sum_{j=1}^{D}\gamma_j/ s_j \leq 2$.
\end{rem}}

\begin{lem}\label{prop:MSEbound}
For the estimator \eqref{eq:mismc} $\widehat\varphi^{\rm MI}_\cI = \frac{\sum_{\alpha \in \cI} F^{N_\alpha}_\alpha(\psi_{\varphi_\alpha})}
{\max\{\sum_{\alpha \in \cI} F^{N_\alpha}_\alpha(\psi_{1}), Z_{\min}\}}$, the following inequality holds $$\bbE [ ( \widehat \varphi^{\rm MI}_\cI - \pi(\varphi) )^2 ] \leq C \max_{\zeta \in \{\varphi, 1\}} 
\left(\sum_{\alpha \in \cI}\bbE\left[\left(F^{N_\alpha}_\alpha(\psi_{\zeta_\alpha}) - F_\alpha(\psi_{\zeta_\alpha})\right)^2\right] + \left(\sum_{\alpha \notin \cI}F_\alpha(\psi_{\zeta_\alpha})\right)^2\right) \, .$$
\end{lem}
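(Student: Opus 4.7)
\textbf{Proof plan for Lemma \ref{prop:MSEbound}.} Write $\hat A := \sum_{\alpha\in\cI} F^{N_\alpha}_\alpha(\psi_{\varphi_\alpha})$, $\tilde B := \sum_{\alpha\in\cI} F^{N_\alpha}_\alpha(\psi_{1})$, $\hat B := \max\{\tilde B, Z_{\min}\}$, and let $A := f(\varphi) = \sum_{\alpha\in\bbZ_+^D} F_\alpha(\psi_{\varphi_\alpha})$, $B := f(1) = Z = \sum_{\alpha\in\bbZ_+^D} F_\alpha(\psi_{1})$, where the last equalities use \eqref{eq:coupledbias}. Then $\pi(\varphi) = A/B$ and the plan is to attack the ratio bias via
\[
\widehat\varphi^{\rm MI}_\cI - \pi(\varphi) \;=\; \frac{\hat A}{\hat B} - \frac{A}{B} \;=\; \frac{B(\hat A - A) - A(\hat B - B)}{\hat B\, B}\,.
\]
Since $\hat B \geq Z_{\min}$ by construction and $B = Z \geq Z_{\min}$ by Assumption \ref{ass:G1}, the denominator is bounded below by $Z_{\min}^2$, and $A,B$ are bounded above by Assumption \ref{ass:G1} together with boundedness of $\varphi$. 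Hence $(a+b)^2 \leq 2a^2+2b^2$ yields
\[
\bbE[(\widehat\varphi^{\rm MI}_\cI - \pi(\varphi))^2] \;\leq\; C\bigl(\bbE[(\hat A - A)^2] + \bbE[(\hat B - B)^2]\bigr)
\]
for a constant $C>0$ depending only on $Z_{\min}$ and $\|\varphi\|_\infty$.

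Next I would bound each MSE by a variance plus a squared truncation bias. For the numerator, since the SMC samplers across distinct $\alpha\in\cI$ are run independently and each is unbiased (Proposition \ref{prop:unnocoupled}, via \eqref{eq:coupledbias}), the centered sum $\sum_{\alpha\in\cI}(F^{N_\alpha}_\alpha(\psi_{\varphi_\alpha}) - F_\alpha(\psi_{\varphi_\alpha}))$ has expectation zero and variance equal to the sum of variances. Combining with the deterministic truncation error $\sum_{\alpha\notin\cI} F_\alpha(\psi_{\varphi_\alpha})$ via the bias-variance decomposition,
\[
\bbE[(\hat A - A)^2] \;=\; \sum_{\alpha\in\cI}\bbE\bigl[(F^{N_\alpha}_\alpha(\psi_{\varphi_\alpha}) - F_\alpha(\psi_{\varphi_\alpha}))^2\bigr] + \Bigl(\sum_{\alpha\notin\cI} F_\alpha(\psi_{\varphi_\alpha})\Bigr)^2.
\]

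For the denominator I need the extra observation that the $\max$ with $Z_{\min}$ cannot increase the error: since $B \geq Z_{\min}$, if $\tilde B \geq Z_{\min}$ then $\hat B = \tilde B$ and $|\hat B - B| = |\tilde B - B|$, while if $\tilde B < Z_{\min}$ then $\hat B = Z_{\min}$ and $|\hat B - B| = B - Z_{\min} \leq B - \tilde B = |\tilde B - B|$. Hence $\bbE[(\hat B - B)^2] \leq \bbE[(\tilde B - B)^2]$, and the same bias-variance decomposition as above (now with $\zeta = 1$) applies. Assembling the two bounds and taking the maximum over $\zeta\in\{\varphi,1\}$ yields the claimed inequality.

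The only non-routine ingredients are the thresholding argument that removes $Z_{\min}$ and the use of independence across levels; everything else is the standard ratio-estimator linearization combined with the unbiasedness in Proposition \ref{prop:unnocoupled}. The step most likely to require care in a fully rigorous write-up is verifying that all constants in the $A/B$ linearization are genuinely independent of $\cI$ and $\{N_\alpha\}$, which follows from the uniform-in-$\alpha$ lower bound on $L_\alpha$ and upper bound on $Z$ provided by Assumption \ref{ass:G1}.
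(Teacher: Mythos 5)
Your proof is correct and follows essentially the same route as the paper's: a ratio linearization, bounding the denominator below by $Z_{\min}^2$ and the numerator coefficients above using Assumption \ref{ass:G1}, the thresholding inequality $|\max\{a,Z_{\min}\} - \max\{b,Z_{\min}\}| \leq |a-b|$ to remove the $\max$, and then a bias--variance split using unbiasedness (Proposition \ref{prop:unnocoupled}) and independence of the SMC runs across $\alpha$. The only cosmetic difference is the specific algebraic arrangement of the ratio identity -- you write $\frac{\hat A}{\hat B}-\frac{A}{B}=\frac{B(\hat A-A)-A(\hat B-B)}{\hat B B}$ whereas the paper factors it as $\frac{\hat A}{\hat B B}(B-\hat B)+\frac1B(\hat A-A)$ -- but both are equivalent linearizations leading to the same two error terms and the same constant structure.
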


\begin{proof}
Recall that from \eqref{eq:mismc} we have
$\widehat\varphi^{\rm MI}_\cI = \frac{\sum_{\alpha \in \cI} F^{N_\alpha}_\alpha(\psi_{\varphi_\alpha})}
{\max\{\sum_{\alpha \in \cI} F^{N_\alpha}_\alpha(\psi_{1}), Z_{\min}\}}$. 
So 
\begin{eqnarray*}
\bbE [ ( \widehat \varphi^{\rm MI}_\cI - \pi(\varphi) )^2 ]  &=& 
\bbE\left[\left( \frac{\sum_{\alpha \in \cI} F^{N_\alpha}_\alpha(\psi_{\varphi_\alpha})}
{\max\{\sum_{\alpha \in \cI} F^{N_\alpha}_\alpha(\psi_{1}), Z_{\min}\}}
- \frac{f(\varphi)}{f(1)} \right)^2\right] \\ 
&=& \bbE\Bigg[\Bigg( \frac{\sum_{\alpha \in \cI} F^{N_\alpha}_\alpha(\psi_{\varphi_\alpha})}
{\max\{\sum_{\alpha \in \cI} F^{N_\alpha}_\alpha(\psi_{1}), Z_{\min}\} 
f(1)} 
 \bigg (
f(1)\\
&& -\ \ \max\{\sum_{\alpha \in \cI} F^{N_\alpha}_\alpha(\psi_{1}), Z_{\min}\}\bigg ) \\ 
&+& \frac{1}{f(1)}\bigg (\sum_{\alpha \in \cI} F^{N_\alpha}_\alpha(\psi_{\varphi_\alpha}) - f(\varphi)\bigg) \Bigg)^2\Bigg].
\end{eqnarray*}
Since $f(1) \geq Z_{\min}$ and $| \max\{A,Z\} - \max\{B, Z\} | \leq | A-B|$,
we have
\begin{equation}\label{eq:maxes}
\bbE\left[\left(\max\{\sum_{\alpha \in \cI} 
F^{N_\alpha}_\alpha(\psi_{1}), Z_{\min}\} - f(1)\right )^2\right] \leq 
\bbE\left[\left(\sum_{\alpha \in \cI} F^{N_\alpha}_\alpha(\psi_{1}) - f(1)\right )^2\right] \, .
\end{equation} 
Then, we have
\begin{eqnarray*}
\bbE [ ( \widehat \varphi^{\rm MI}_\cI - \pi(\varphi) )^2 ] &\leq& 
C \max_{\zeta \in \{\varphi, 1\}} 
\bbE\left[\left(\sum_{\alpha \in \cI} F^{N_\alpha}_\alpha(\psi_{\zeta_\alpha}) - f(\zeta) \right)^2\right] \\
&\leq & C \max_{\zeta \in \{\varphi, 1\}} 
\bbE\Bigg[\left(\sum_{\alpha \in \cI}F^{N_\alpha}_\alpha(\psi_{\zeta_\alpha}) - \sum_{\alpha \in \cI}F_\alpha(\psi_{\zeta_\alpha})\right)^2  \\
&& +\ \ \Bigg(\sum_{\alpha \in \cI}F_\alpha(\psi_{\zeta_\alpha}) - f(\zeta) \Bigg)^2\Bigg] \\
&=& C \max_{\zeta \in \{\varphi, 1\}} 
\left(\sum_{\alpha \in \cI}\bbE\left[\left(F^{N_\alpha}_\alpha(\psi_{\zeta_\alpha}) - F_\alpha(\psi_{\zeta_\alpha})\right)^2\right] + \left(\sum_{\alpha \notin \cI}F_\alpha(\psi_{\zeta_\alpha})\right)^2\right) \, .
\end{eqnarray*}
\end{proof}

\blu{\begin{rem}
\label{rem:lowerbound}
It is remarked that one {\bf always has} $Z >0$,
therefore given a target error level $\varepsilon$,
one can always {\bf replace} $Z_{\rm min} \leftarrow \varepsilon$,
and achieve the same result.
To see this, denoting 
$$
\hat{Z}^{\sf MI}=\sum_{\alpha \in \cI} F^{N_\alpha}_\alpha(\psi_{1}) \, ,
$$
observe that line \eqref{eq:maxes} can be replaced with
$$
|\max\{\hat{Z}^{\sf MI},\varepsilon\} - Z| \leq 
|\hat{Z}^{\sf MI} -  Z|
+ |\varepsilon| + |\max\{{Z}-\varepsilon,0\} - \max\{{Z},0\}|
\leq |\hat{Z}^{\sf MI} -  Z|
+ 2|\varepsilon| \, .
$$
\end{rem}}

The Theorem \ref{thm:main2} formulates the total degree index set with general $\delta$ satisfying some loose conditions. In the paper \cite{haji2016multi}, 
optimal $\delta$ 
is constructed 
according to a profit indicator. The focus of the present work is on the canonical case, 
where the complexity is dominated 
by low levels, so we simply choose $\delta \propto s$.
The proof of the Theorem \ref{thm:main2} is based on the general $\delta$,
and it is easy to see that this choice suffices.

To achieve the canonical rate of complexity, 
Theorem \ref{thm:main1} with the tensor product index set relies on the essential assumption 
that $\sum_{j=1}^{D}\frac{\gamma_j}{s_j}\leq2$, 
which ensures that the samples at the finest index do not dominate the cost. 
If the assumption is violated, then only the sub-canonical complexity 
$\sum_{j=1}^{D}\frac{\gamma_j}{s_j}$ can be achieved.
This rate may often be $D-$dependent, resulting in a so-called {\em curse-of-dimensionality}.
In comparison, Theorem \ref{thm:main2} with the total degree index set releases this constraint, 
and improves the computational complexity for many problems from sub-canonical to canonical,
as illustrated in the numerical examples.

{
\subsection{Verification of assumptions}

\blu{Here we briefly discuss the models considered before
in connection with the required Assumptions 
\ref{ass:rate}.}
Note that both posteriors have the form
$\exp(\Phi(x))$ for some $\Phi:\sf X \rightarrow \bbR$,
and are approximated by 
\blu{$\Phi_\alpha:\sf X \rightarrow \bbR$.}

\begin{prop}\label{prop:expdod}
\blu{Let $\sX$ be a Banach space with $D=2$ s.t. $\pi_0(\sX)=1$, 
with norm $\|\cdot\|_\sX$.}
For all $\epsilon>0$, there exists a $C(\epsilon)>0$ such that
the following holds for $\Phi, \Phi_\alpha$ given by \eqref{eq:lgplike}, \eqref{eq:lgplikefin},
or $\log(L),\log(L_\alpha)$ from \eqref{eq:target}, respectively:
$$
\Delta \exp(\Phi_\alpha(x_\alpha)) \leq 
C(\epsilon) \exp(\epsilon \|x\|^2_\sX) \left ( |\Delta \Phi_\alpha(x_\alpha)| + 
|\Delta_1 \Phi_{\alpha-e_2} (x_{\alpha-e_2})| 
|\Delta_2 \Phi_{\alpha-e_1} (x_{\alpha-e_1})|\right) \, .
$$
\end{prop}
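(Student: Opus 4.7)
The plan is to apply the fundamental theorem of calculus twice to $\Delta \exp(\Phi_\alpha(x_\alpha))$ in order to isolate the two structural pieces of the target bound, and then handle the resulting exponential pre-factor. Abbreviate $a = \Phi_\alpha(x_\alpha)$, $b = \Phi_{\alpha-e_1}(x_{\alpha-e_1})$, $c = \Phi_{\alpha-e_2}(x_{\alpha-e_2})$, $d = \Phi_{\alpha-e_1-e_2}(x_{\alpha-e_1-e_2})$, so that $\Delta \exp(\Phi_\alpha) = (e^a - e^b) - (e^c - e^d)$. The algebraic identities $a-b-c+d = \Delta \Phi_\alpha$, $c-d = \Delta_1 \Phi_{\alpha-e_2}$ and $b-d = \Delta_2 \Phi_{\alpha-e_1}$ are exactly the three quantities appearing in the claim.

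First I would apply $e^u - e^v = (u-v)\int_0^1 e^{(1-s)v + su}\,ds$ to each bracket, and rearrange the resulting expression as
\[
\Delta \exp(\Phi_\alpha) = \Delta \Phi_\alpha \int_0^1 e^{G_1(s)}\,ds + \Delta_1 \Phi_{\alpha-e_2} \int_0^1 \bigl[e^{G_1(s)} - e^{G_2(s)}\bigr] ds,
\]
where $G_1(s) = (1-s)b + sa$ and $G_2(s) = (1-s)d + sc$. A second application of the same integral identity to $e^{G_1(s)} - e^{G_2(s)}$, combined with the elementary computation $G_1(s) - G_2(s) = \Delta_2 \Phi_{\alpha-e_1} + s\,\Delta \Phi_\alpha$, produces the desired cross term $\Delta_1 \Phi_{\alpha-e_2}\cdot \Delta_2 \Phi_{\alpha-e_1}$ together with a remainder of the form $\Delta_1 \Phi_{\alpha-e_2} \cdot \Delta \Phi_\alpha$. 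Taking absolute values, this remainder is absorbed into the $|\Delta \Phi_\alpha|$ term of the claim, since $|\Delta_1 \Phi_{\alpha-e_2}|$ is itself bounded by the exponential envelope (see below). All the exponential pre-factors that appear are convex combinations of $e^a, e^b, e^c, e^d$, and so are dominated by $\max_{\alpha'} \exp(\Phi_{\alpha'}(x_{\alpha'}))$ taken over the four corners.

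The main obstacle is dominating this common exponential envelope by $C(\epsilon)\exp(\epsilon \|x\|_X^2)$ uniformly in $\alpha$. For the PDE likelihood this step is immediate from Proposition~\ref{prop:bip}: $L$ and $L_\alpha$ are uniformly bounded above and below by positive constants, so $\exp(\Phi_{\alpha'})$ is bounded by a deterministic constant independent of $x$. For the LGP/LGC posteriors, one instead uses the elementary estimate $|\Phi_\alpha(x_\alpha)| \leq C n \|\hat x_\alpha\|_\infty$, which follows from $|\hat x_\alpha(z_j)| \leq \|\hat x_\alpha\|_\infty$ together with the two-sided bound $\exp(-\|x_\alpha\|_\infty) \leq Q(\exp x_\alpha) \leq \exp(\|x_\alpha\|_\infty)$. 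One then chooses $X$ to be a Banach space that continuously embeds into $C(\bar\Omega)$ and for which both the truncation $x_\alpha$ and its interpolant $\hat x_\alpha$ are controlled uniformly by $\|x\|_X$; in view of Proposition~\ref{prop:prodspecconv}, the mixed Sobolev spaces $H^m_q$ with $q$ just below $\beta/2$ are natural candidates when $\beta > 1$. A final application of Young's inequality $Cn\|x\|_X \leq C(\epsilon) + \epsilon \|x\|_X^2$, absorbed into the multiplicative constant, yields the claimed envelope and completes the argument.
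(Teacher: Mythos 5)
Your proof is correct, but it takes a genuinely different route from the paper's. The paper writes $\Delta e^{\Phi_\alpha} = e^{A_{10}}(e^{A_{11}-A_{10}}-1) - e^{A_{00}}(e^{A_{01}-A_{00}}-1)$ and then performs an add-and-subtract of $e^{A_{10}}(e^{A_{01}-A_{00}}-1)$ to land \emph{exactly} on the two target terms, $|\Delta\Phi_\alpha|$ and $|\Delta_1\Phi_{\alpha-e_2}||\Delta_2\Phi_{\alpha-e_1}|$, via the mean value theorem. Your approach applies the fundamental-theorem representation $e^u-e^v=(u-v)\int_0^1 e^{(1-s)v+su}ds$ twice and produces a third remainder $|\Delta_1\Phi_{\alpha-e_2}||\Delta\Phi_\alpha|$, which you then absorb by bounding $|\Delta_1\Phi_{\alpha-e_2}|$ by the Gaussian envelope. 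Both are valid for LGP and the PDE target, so there is no gap. The trade-off is interesting to note: the paper's algebraic identity is tighter (no remainder), but its MVT envelope genuinely involves factors of the form $e^{-A_{ij}}=e^{-\Phi}$ (this is precisely what the paper's own remark identifies as the obstruction for LGC, where $e^{-\Phi}$ is doubly exponential). Your envelope, by contrast, consists purely of convex combinations of $e^a,e^b,e^c,e^d$ and therefore needs only the one-sided bound $\Phi\leq\|x\|_X$; the price is the extra remainder term and the requirement of a (mild) two-sided bound on $|\Delta_1\Phi|$ for its absorption. In the end both arguments require a two-sided bound on $\Phi$ of the same quality, so they cover the same cases, but the way the $e^{-\Phi}$ dependence enters is structurally different and worth being aware of. One small wording issue: at the end you should be explicit that absorbing $|\Delta_1\Phi_{\alpha-e_2}|\leq C(\epsilon')\exp(\epsilon'\|x\|_X^2)$ into the envelope inflates $\epsilon$ to $\epsilon+\epsilon'$; since both are free this is harmless, but it should be said so that the claimed $C(\epsilon)\exp(\epsilon\|x\|_X^2)$ form is recovered for arbitrary $\epsilon>0$.
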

\begin{proof}
Let us introduce the shorthand notation
$A_{11}=\Phi_\alpha(x_\alpha)$,
$A_{10}=\Phi_{\alpha-e_2}(x_{\alpha-e_2})$,
$A_{01}=\Phi_{\alpha-e_1}(x_{\alpha-e_1})$,
$A_{00}=\Phi_{\alpha-e_1-e_2}(x_{\alpha-e_1-e_2})$.
We have 
\begin{eqnarray*}
\Delta \exp(\Phi_\alpha(x_\alpha)) &=& 
\exp(A_{11}) - \exp(A_{10})
-\left ( \exp(A_{01}) - 
\exp(A_{00})\right) \, \\
&=&\exp(A_{10})
\left (\exp(A_{11}-A_{10}) - 1\right ) - 
\exp(A_{00})
\left ( \exp(A_{01}-A_{00})- 1 \right ) \, \\
&=& \exp(A_{10})
\left (\exp(A_{11}-A_{10})-\exp(A_{01}-A_{00}) \right ) \, \\
&&+
\left ( \exp(A_{01})-\exp(A_{00})\right )
\left ( \exp(A_{10}-A_{00})- 1 \right ) \\
&\leq & C(\epsilon) \exp(\epsilon \|x\|^2_\sX) ( |A_{11} - A_{10} - (A_{01}-A_{00})| \\
&& \qquad + 
|A_{01}-A_{00}| |A_{10}-A_{00}|) \, ,
\end{eqnarray*}
where we have added and subtracted 
$\exp(A_{10})
\left ( \exp(A_{01}-A_{00})- 1 \right )$
in going from the second to the third line.
The final line follows from the mean value theorem
and equations \eqref{eq:phibnd2} and \eqref{eq:phidiff}
with $\sX=H^{\sf m}_r$, $r>1/2$.
These trivially hold for \eqref{eq:target}.

\sloppy The issue which prevented us from achieving above for LGC
is that terms like $\exp(-A_{10}) \propto \exp(-\Phi_\alpha(x_\alpha))$
appear in the constant, which involve a factor like
$\exp(\int\exp(x(z))dz)$. Fernique Theorem does not guarantee 
that such double exponentials are finite. However, for LGP, we 
instead have 
$$
\exp(-A_{10}) \propto (\int_\Omega \exp(x(z))dz)^{n} 
\leq |\Omega|^n \exp(n \|x\|_{L^\infty({\Omega})}) \leq |\Omega|^n \exp(n \|x\|_r).
$$
\end{proof}

\vspace{10pt}
\noindent
{\bf PDE. }
The following proposition updates Proposition
\ref{prop:standardpde}, and 
is proven in the literature on mixed regularity 
of the solution of elliptic PDE, 
as mentioned already in subsection \ref{sec:MIMC}. 
See e.g. \cite{haji2016multi} and references therein.

 \begin{prop}\label{prop:mixedratepde}
Let $u_\alpha$ be the solution of \eqref{eq:elliptic}-\eqref{eq:boundary}
at resolution $\alpha$, as described in subsection \ref{sec:fem},
for $a(x)$ given by \eqref{eq:diff} and uniformly over $x \in [-1,1]^d$, 
and $\mathsf{f}$ suitably smooth.
Then there exists a $C>0$ such that
$$
\| \Delta u_\alpha(x) \|_{V} \leq C 2^{-\alpha_1-\alpha_2} \, .
$$
Furthermore, 
$$
\| \Delta u_\alpha(x) \| \leq C 2^{-2(\alpha_1+\alpha_2)} \, .
$$

 \end{prop}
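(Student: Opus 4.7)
The proof will rest on three ingredients: mixed regularity of the weak solution $u$, tensor product interpolation estimates on the finite element spaces, and an energy/duality argument to pass from nodal interpolation to Galerkin projection.

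First I would establish that, under the assumptions in force (coefficient $a(x)$ of the form \eqref{eq:diff} with smooth $\psi_i$'s, convex product domain $\Omega=[0,1]^2$, and suitably smooth $\mathsf f$), the solution $u(x)$ enjoys mixed regularity, in the sense that $\partial_{z_1}^{s_1}\partial_{z_2}^{s_2}u\in L^2(\Omega)$ with $L^2$-norm bounded uniformly in $x\in[-1,1]^d$ for $s_1,s_2\le 2$. Since $a$ and $\mathsf f$ depend on $z=(z_1,z_2)$ in a tensor-smooth way, this propagates through a direction-by-direction elliptic regularity argument applied to $\partial_{z_1}u$ and $\partial_{z_2}u$ (using that $\partial_{z_i}a$ is uniformly bounded); convexity of $\Omega$ prevents boundary loss.

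Next I would invoke the standard tensor product interpolation estimate for the piecewise bilinear interpolation $\Pi_\alpha=\Pi_{\alpha_1}^{(1)}\otimes\Pi_{\alpha_2}^{(2)}$. Writing the mixed increment
\[
\Delta\Pi_\alpha v=\bigl(\Pi_{\alpha_1}^{(1)}-\Pi_{\alpha_1-1}^{(1)}\bigr)\otimes\bigl(\Pi_{\alpha_2}^{(2)}-\Pi_{\alpha_2-1}^{(2)}\bigr)v,
\]
one-dimensional piecewise linear interpolation theory applied in each direction yields
\[
\|\Delta\Pi_\alpha v\|_{V}\le C\,2^{-\alpha_1-\alpha_2}\|\partial_{z_1}^2\partial_{z_2}^2 v\|_{L^2},\qquad \|\Delta\Pi_\alpha v\|_{L^2}\le C\,2^{-2(\alpha_1+\alpha_2)}\|\partial_{z_1}^2\partial_{z_2}^2 v\|_{L^2}.
\]
Applied to $v=u(x)$, the mixed regularity step bounds the right-hand sides uniformly in $x$.

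Third, I need to transfer these estimates from $\Pi_\alpha u$ to the Galerkin solution $u_\alpha$. Write $\Delta u_\alpha=\Delta\Pi_\alpha u+\Delta(u_\alpha-\Pi_\alpha u)$. Since $u$ does not depend on $\alpha$, the first term is already controlled by the estimate above. For the second term I would exploit that the spaces $V_{\alpha_1,\alpha_2}$ are nested under componentwise refinement and that $u_\alpha$ is the Galerkin projection in the energy inner product $\mathcal E(v,w)=\int a\nabla v\cdot\nabla w$, which is equivalent to $\|\cdot\|_V^2$. This gives Pythagorean identities such as $\mathcal E(u-u_{\alpha-e_i})=\mathcal E(u-u_\alpha)+\mathcal E(u_\alpha-u_{\alpha-e_i})$, and a Cea bound $\|u-u_\alpha\|_V\le C\|u-\Pi_\alpha u\|_V$. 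Combining these and organizing the four Galerkin approximations in $\Delta u_\alpha$ into telescoping increments yields the $V$-norm bound. The $L^2$ bound follows by an Aubin--Nitsche duality argument performed twice: one solves the adjoint problem $-\nabla\cdot(a\nabla w)=\Delta u_\alpha$ and uses its mixed regularity (same coefficient, same smoothness) to gain a factor $2^{-\alpha_i}$ in each coordinate direction, upgrading the $V$-rate to the $L^2$-rate.

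The main obstacle is exactly this last transfer: the Galerkin projection onto the tensor product space $V_{\alpha_1}^{(1)}\otimes V_{\alpha_2}^{(2)}$ does \emph{not} factor as a tensor product of lower dimensional Galerkin projections, because $a(x)(z)$ is not separable in $z_1,z_2$. Hence the clean tensor-product structure available for $\Pi_\alpha$ must be recovered through an indirect comparison $(u_\alpha-\Pi_\alpha u)$, using energy orthogonality to show the difference between the Galerkin and interpolation operators contributes only higher order terms in the mixed difference. This technical comparison, handled in detail in the sparse-grid/MIMC literature (see \cite{haji2016multi} and references therein), is what makes the argument nontrivial; once it is in place, the stated rates follow by combining the above ingredients.
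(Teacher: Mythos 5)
Your outline correctly identifies the three ingredients underpinning this result---mixed $H^{2}\otimes H^{2}$-type regularity of $u$ uniformly in $x$, tensor-product interpolation estimates for $\Pi_\alpha=\Pi_{\alpha_1}^{(1)}\otimes\Pi_{\alpha_2}^{(2)}$, and a transfer step from interpolation error to the Galerkin projection---and correctly locates the central obstacle in the fact that the Galerkin projection does not tensor-factorize when $a(x)(z)$ is non-separable. The paper itself offers no proof of this proposition: it simply refers the reader to \cite{haji2016multi} and the sparse-grid literature cited therein. In that sense your proposal matches the paper's level of rigor while being considerably more informative about the anatomy of the result, and both defer the decisive technical step to the same external source.

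One caveat should be flagged so the reader is not misled: the passage where you suggest that the directional Pythagorean identities plus a Cea bound, once ``organized into telescoping increments,'' yield the $V$-norm bound for $\Delta u_\alpha$, is not an argument. Galerkin orthogonality gives
$$
\|u-u_{\alpha-e_i}\|_{\mathcal{E}}^{2} = \|u-u_{\alpha}\|_{\mathcal{E}}^{2} + \|u_{\alpha}-u_{\alpha-e_i}\|_{\mathcal{E}}^{2} \, ,
$$
which controls single directional increments; it does not by itself control the mixed increment $\Delta(u_\alpha-\Pi_\alpha u)$, and the naive triangle-inequality bound on that quantity gives only $\mathcal{O}(\max\{2^{-\alpha_1},2^{-\alpha_2}\})$, worse than the claimed $2^{-\alpha_1-\alpha_2}$. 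The genuinely nontrivial step is proving that $\Delta(P_\alpha-\Pi_\alpha)u$ is higher order, which requires duality (Aubin--Nitsche) arguments applied coordinatewise to the commutator between Galerkin and interpolation operators; you do acknowledge that this is where the difficulty lies and refer to \cite{haji2016multi}, so the proposal is honest, but the earlier ``telescoping'' sentence should not be read as a shortcut around that work.
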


Note that since $L_\alpha(x) \leq C < \infty$ by Assumption \ref{ass:G1}, the constant in Proposition
\ref{prop:expdod} can be made uniform over $x$, and hence the required rate
in Assumption \ref{ass:rate} is established immediately.

\vspace{15pt}

\noindent
{\bf LGP. } 
{Will restrict consideration to LGP here, since LGC features double exponentials
which are difficult to handle theoretically in this context.}
The following proposition updates Proposition
\ref{prop:prodspecconv}
as required for differences of differences.

 \begin{prop}\label{prop:prodspecconvmi}
 Let $x \sim \pi_0$, where $\pi_0$ is a Gaussian process 
 of the form \eqref{eq:GPKL} with spectral decay 
corresponding to \eqref{eq:prodspec}, 
and let $x_\alpha$ correspond to truncation on the index set 
$\cA_\alpha = \cap_{i=1}^2 \{|k_i|\leq2^{\alpha_i}\}$ as in \eqref{eq:GPKLfin}. 
Then there is a $C>0$ such that for all $q<(\beta-1)/2$
$$
\| \Delta x_\alpha \|^2 \leq C \|x\|_q^2 2^{-2q \sum_{i=1}^2 \alpha_i} \, .
$$
 \end{prop}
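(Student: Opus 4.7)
The plan is to exploit the tensor product structure of both the truncation set $\cA_\alpha$ and the product-form spectrum \eqref{eq:prodspec}, together with the product-form mixed weight $a_k = k_1^2 k_2^2$ from \eqref{eq:mixedlap}. First I would note that, writing $\Delta = \Delta_1 \Delta_2$ as a composition of one-dimensional truncation differences, and observing that $\cA_\alpha$ factorizes as a Cartesian product in the two Fourier directions, the Fourier support of $\Delta x_\alpha$ reduces to the ``corner'' annulus $D_{\alpha_1}\times D_{\alpha_2}$, where $D_{\alpha_i}:=\{k_i : 2^{\alpha_i-1} < |k_i| \leq 2^{\alpha_i}\}$. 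In particular, any mode with $k_1=0$ or $k_2=0$ is annihilated by the corresponding $\Delta_i$ (since changing $\alpha_i$ leaves that slice unchanged), which is fortunate because the mixed weight $a_k$ vanishes on precisely those axes and is therefore ``blind'' to them.

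Next, using orthogonality of the Fourier basis $\{\phi_k\}$ in $L^2$, Parseval collapses the squared norm to a sum over the corner modes,
\begin{equation*}
\|\Delta x_\alpha\|^2 \leq C \sum_{(k_1,k_2)\in D_{\alpha_1}\times D_{\alpha_2}} \zeta_k^2(\theta)\,|\xi_k|^2 \, .
\end{equation*}
Inserting $1 = a_k^q/a_k^q$ into each summand and using the pointwise lower bound $a_k^q = (k_1 k_2)^{2q} \geq c\, 2^{2q(\alpha_1+\alpha_2-2)}$ on $D_{\alpha_1}\times D_{\alpha_2}$ then extracts the desired decay factor,
\begin{equation*}
\|\Delta x_\alpha\|^2 \leq C\,2^{-2q(\alpha_1+\alpha_2)} \sum_{k} a_k^q \zeta_k^2(\theta)\,|\xi_k|^2 = C\,2^{-2q(\alpha_1+\alpha_2)}\,\|x\|_q^2\, ,
\end{equation*}
where we have enlarged the sum back over all $k$ and used the definition of the mixed Sobolev-like norm \eqref{eq:soblike}. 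The restriction $q<(\beta-1)/2$ guarantees that $\|x\|_q^2$ is almost surely finite with margin to spare, consistent with Proposition \ref{prop:prodspecconv}, and leaves enough regularity to support the downstream use in Proposition \ref{prop:biplgc}.

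The main technical point to verify carefully will be the inclusion--exclusion argument that localizes the Fourier support of $\Delta x_\alpha$ to the corner $D_{\alpha_1}\times D_{\alpha_2}$, with special attention to the ``axis'' slices $\{k_1=0\}$ and $\{k_2=0\}$ in the original index set of \eqref{eq:GPKL}, where the mixed weight $a_k$ degenerates. Once the cancellation on these axes is established, the remainder of the argument is a straightforward weighted Parseval computation exploiting the product structure.
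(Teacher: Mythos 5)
Your proof is correct and follows essentially the same line as the paper: both localize the Fourier support of $\Delta x_\alpha$ to the corner $\cap_{i=1}^2\{2^{\alpha_i-1}\leq|k_i|\leq 2^{\alpha_i}\}$ and then extract the factor $2^{-2q(\alpha_1+\alpha_2)}$ from the pointwise lower bound $a_k^q\geq c\,2^{2q(\alpha_1+\alpha_2-2)}$ on that set, which is exactly the paper's operator-norm bound $\|A^{-q/2}P_{\cap_i\{2^{\alpha_i-1}\leq|k_i|\leq2^{\alpha_i}\}}\|_{\cL(L^2,L^2)}\leq C\,2^{-q\sum_i\alpha_i}$ unpacked via Parseval. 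The one point to treat with care --- which you rightly flag --- is the axis slices where $a_k=0$: the cancellation you invoke holds only for $\alpha_1,\alpha_2\geq1$, since at $\alpha_i=0$ the convention $x_{\alpha-e_i}\equiv0$ gives $\Delta_i x_\alpha=x_\alpha$, which does retain the $k_i=0$ modes; this boundary case is elided in the paper's sketch as well.
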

 
 \begin{proof}
The proof follows along the same lines as that of Proposition \ref{prop:prodspecconv}
(\ref{prop:prodspecconvapp}),
except instead of projection onto 
$\cup_{i=1}^2 \{|k_i| > 2^{\alpha_i}\}$, the projection here 
is onto the set of indices
$\cap_{i=1}^2 \{2^{\alpha_i-1} \leq |k_i| \leq 2^{\alpha_i}\}$, i.e.
$$
\|A^{-q/2} P_{\cap_{i=1}^2 \{2^{\alpha_i-1} \leq |k_i| \leq 2^{\alpha_i}\}}\|_{\cL(L^2,L^2)}
\leq C 2^{-q \sum_{i=1}^2 \alpha_i} \, .
$$
 \end{proof}
 
 The key phenomenon that takes place is that the difference of difference 
 $\Delta x_\alpha$ leaves a remainder which is an intersection 
 $\cap_{i=1}^2 \{2^{\alpha_i-1} \leq |k_i| \leq 2^{\alpha_i}\}$, rather than the 
 union $\cup_{i=1}^2 \{2^{\alpha_i-1} \leq |k_i| \leq 2^{\alpha_i}\}$, associated to the 
 truncation error in Proposition \ref{prop:prodspecconv}, which one would achieve
 with a single difference from $x_\alpha - x_{\alpha-1}$.
 This eliminates all indices in which $k_i^{-1} =\cO(1)$ for some $i$,
 and provides the required product-form rates.

 \begin{prop}\label{prop:milgc}
 The rate from Proposition \ref{prop:prodspecconvmi}
 is inherited by the likelihood, resulting in verification of Assumption
 \ref{ass:rate}(V) with $\beta_i=\beta$.
\end{prop}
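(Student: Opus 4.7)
The plan is to chain Proposition \ref{prop:expdod} with Proposition \ref{prop:prodspecconvmi}, using Fernique's theorem to absorb the growth factor and Sobolev embedding to translate mixed $H^m_q$ convergence into pointwise and integral convergence of the LGP log-likelihood.

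First I would apply Proposition \ref{prop:expdod} with $X = H^m_r$ for $r > 1/2$ chosen so that point evaluation is continuous on $X$ and Fernique's theorem gives $\bbE[\exp(4\epsilon\|x\|_X^2)] < \infty$ for $\epsilon$ small. This yields the pointwise bound
\begin{equation*}
|\Delta L_\alpha(x)| \leq C(\epsilon) \exp(\epsilon \|x\|_X^2)\bigl(|\Delta \Phi_\alpha(x_\alpha)| + |\Delta_1 \Phi_{\alpha-e_2}(x_{\alpha-e_2})| \, |\Delta_2 \Phi_{\alpha-e_1}(x_{\alpha-e_1})|\bigr),
\end{equation*}
where $\Phi_\alpha(x_\alpha) = \sum_{j=1}^n \hat{x}_\alpha(z_j) - n \log Q(\exp(x_\alpha))$. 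Multiplying by the bounded $\zeta_\alpha$, squaring, integrating against $\pi_0$, and using Cauchy--Schwarz to split off the Fernique factor reduces the task to bounding
\begin{equation*}
T_1 := \bbE[(\Delta \Phi_\alpha(x_\alpha))^4]^{1/2}, \qquad T_2 := \bbE\bigl[(\Delta_1 \Phi_{\alpha-e_2})^4 (\Delta_2 \Phi_{\alpha-e_1})^4\bigr]^{1/2}.
\end{equation*}

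For $T_1$, I would split $\Delta \Phi_\alpha$ into its linear part $\sum_j \Delta \hat{x}_\alpha(z_j)$ and its nonlinear part $-n\, \Delta \log Q(\exp(x_\alpha))$. For the linear part, Sobolev embedding $H^m_r \hookrightarrow C(\Omega)$ gives $|\Delta \hat{x}_\alpha(z_j)| \leq C\|\Delta x_\alpha\|_r$, and then Proposition \ref{prop:prodspecconvmi} delivers the product-form rate $2^{-q(\alpha_1+\alpha_2)}$ for $q < (\beta-1)/2$. For the nonlinear part, two applications of the mean value theorem (together with the lower bound $Q(\exp(x_\alpha)) \geq |\Omega| \exp(-\|x\|_\infty)$, which is absorbed into the Fernique factor after Cauchy--Schwarz) reduce the bound to moments of $\Delta x_\alpha$ in a sufficiently strong norm, again controlled by Proposition \ref{prop:prodspecconvmi}. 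The fourth moment is obtained via Fernique plus Gaussian hypercontractivity, since $\Delta x_\alpha$ is itself Gaussian. The cross-term $T_2$ is handled by Cauchy--Schwarz, splitting into two expectations involving only a single-direction difference $\Delta_i \Phi$, each of which is bounded by the same mechanism and contributes the rate $2^{-2q\alpha_i}$; taking the product recovers the required $2^{-2q(\alpha_1+\alpha_2)}$, matching $T_1$ up to constants.

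The main obstacle is the nonlinearity $\log Q(\exp(\cdot))$ in the LGP likelihood: passing a difference-of-differences through it cleanly requires carefully tracking that lower bounds on $Q(\exp(x_\alpha))$ (involving $\exp(-\|x\|_\infty)$) can be moved into the Fernique factor rather than spoiling the rate, which is exactly the reason LGP rather than LGC is treated here, as noted in Proposition \ref{prop:expdod}. Once this is arranged, identifying exponents yields Assumption \ref{ass:rate}(V) with $\beta_i$ arbitrarily close to $\beta$, which is the claim (up to the standard $\omega > 0$ slack visible already in the abstract).
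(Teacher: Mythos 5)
Your argument matches the paper's proof: both apply Proposition \ref{prop:expdod} to reduce $\Delta L_\alpha$ to $\exp(\epsilon\|x\|^2_r)$ times a polynomial expression in the $\Delta\Phi$ increments, bound those increments pointwise via the Sobolev-embedding / mean-value-theorem / quadrature estimates already developed for Proposition \ref{prop:biplgc} (leading to \eqref{eq:phidiff}), invoke Proposition \ref{prop:prodspecconvmi} for the product-form decay, and conclude with Fernique's Theorem. The only organizational difference is that you peel off the Fernique factor by Cauchy--Schwarz at the outset and therefore work with fourth moments (controlled by Gaussian hypercontractivity), whereas the paper carries the $\exp(\epsilon\|x\|^2_r)$ weight through the pointwise estimate and integrates once at the end; these are equivalent, and you share with the paper the same small slack in the admissible range of $q$ (hence $\beta_i$ approaching rather than exactly equalling $\beta$).
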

\begin{proof}
The proof follows along the lines of Proposition \ref{prop:biplgc}.
In this case, following from Proposition \ref{prop:expdod},
for all $\epsilon>0$ and $q<\beta/2$, we have 
\begin{eqnarray*}
\bbE_{\pi_{0}} \left( \Delta L_\alpha(x_\alpha) \right)^2 
&\leq& \bbE_{\pi_{0}} C(\epsilon)\exp(\epsilon \|x\|^2_r) 
\left ( \Delta \Phi_\alpha(x_\alpha)+ 
\Delta_1 \Phi_{\alpha-e_2} (x_{\alpha-e_2}) 
\Delta_2 \Phi_{\alpha-e_1} (x_{\alpha-e_1})\right)^2 \\
&\leq & C 2^{-2q \sum_{i=1}^2\alpha_i} \, ,
\end{eqnarray*}
where the second line is computed with estimates similar to \eqref{eq:phidiff},
and Fernique Theorem to conclude, as in the proof of Proposition \ref{prop:biplgc}.

\end{proof}
}

{\section{{Proofs relating the Theorem \ref{thm:main}}}
\label{app:a}

In this section we prove Lemmas \ref{lem:mrt}, 
\ref{lem:qp} and \ref{lem:induc} from which Theorem \ref{thm:main} is an immediate consequence.  
We fix $\alpha \in \cI$ throughout this section and thus, to avoid notational overload, we henceforth suppress it from our notation.

For $j=2,\dots, J$, we define 
\begin{equation}\label{eq:phi}
\Phi_j(\Pi) := \frac{\Pi(H_{j-1} \bm\cM_j)}{\Pi(H_{j-1})} \, ,
\end{equation}
and observe that the iterates of the algorithm of subsection \ref{sec:smc} can be rewritten in the concise form
\begin{equation}\label{eq:phiform}
\bx^i_j \sim \Phi_j(\Pi^N_{j-1}) \,  , \quad {\rm for} ~~ i=1,\dots,N \, ,
\end{equation}
where we recall the definition \eqref{eq:empirical} for 
{the empirical measure $\Pi^N_{j-1}$}.
Let $\Phi_1(\Pi) := \Pi$.
A finer error analysis beyond Proposition \ref{prop:unnocoupled} requires 
keeping track of the effect of errors $(\Pi_j^N - \Phi_j(\Pi_{j-1}^N))$ and accounting for 
the cumulative error at time $J$. 
To this end, and for any $\psi: \sX^{2^D} \rightarrow \bbR$ bounded, 
we define the following partial propagation operator for $p = 1,\dots, J$
\begin{equation}\label{eq:q}
Q_{n,J}(\psi)(\bx_n) =
\int_{\sX^{2^D \times (J-n)}} \psi(\bx_J) \prod_{j=n+1}^{J} H_{j-1}(\bx_{j-1}) \bm\cM_j(\bx_{j-1}, \bx_j) d\bx_{n+1:J} \, ,
\end{equation}
where $Q_{J,J} = I_{2^D}$, i.e. $Q_{J,J}(\psi)(\bx_J) = \psi(\bx_J)$. 
We will assume for simplicity that $F_1 := \Pi_0$, the prior, so that $F_1(1)=1$.
Note that then $\Pi_0(Q_{1}(\psi)(\bx_1)) = F_J(\psi) = F(\psi)$.

We present now the following well-known representation of the error as a  martingale w.r.t. the natural 
filtration of the particle system
(see \cite{vihola,del2004feynman})
\begin{equation}\label{eq:mrt}
F_{J}^N(\psi) - F(\psi) = \sum_{n=1}^J 
\underbrace{F_n^N(1) \left [ \Pi_n^N - \Phi_n(\Pi_{n-1}^N)\right ] (Q_{n,J}(\psi))}_{S_{n,J}^N(\psi)} \, ,
\end{equation}
where we denote the summands as $S_{n,J}^N(\psi)$. 
This clearly shows the unbiasedness property presented in 
Proposition \ref{prop:unnocoupled}. 
In particular $\bbE F_{J}^N(\psi) = F(\psi)$, 
as can be seen by backwards induction conditioning first on 
$\{\bx_{J-1}^i\}_{i=1}^N$ and recalling the form of $F_n^N(1)$ 
given in \eqref{eq:unnocoup}.
This brings us to our first supporting lemma. Throughout these calculations $C$ is a finite constant whose value may change on each appearance.
The dependencies of this constant on the various algorithmic parameters is made clear from the statement.

\begin{lem}\label{lem:mrt}
Assume Assumption \ref{ass:G1}. 
Then for any $J\in\mathbb{N}$ there exists $C>0$ such that for any $N\in\mathbb{N}$  and any  $\psi:\sX^{2^D} \rightarrow \bbR$ bounded
and measurable
$$
\bbE[(F_{J}^N(\psi) - F(\psi))^2] \leq \frac{C}{N} \sum_{n=1}^J \bbE[ (Q_{n,J}(\psi) (\bx_n^1))^2 ].
$$
\end{lem}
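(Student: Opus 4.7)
The plan is to exploit the martingale structure already exposed in the decomposition \eqref{eq:mrt}. Let $\mathcal{F}_n := \sigma(\bx_j^i : 1\leq i \leq N,\ 1\leq j \leq n)$, with $\mathcal{F}_0$ trivial. First I would verify that $\{S_{n,J}^N(\psi)\}_{n=1}^J$ is a martingale difference sequence with respect to $\{\mathcal{F}_n\}$. Conditional on $\mathcal{F}_{n-1}$, the resampling/mutation step of Algorithm \ref{algo:smccoup} produces $\bx_n^1,\dots,\bx_n^N$ as i.i.d.\@ draws from $\Phi_n(\Pi_{n-1}^N)$ (as summarised in \eqref{eq:phiform}), while $F_n^N(1) = \prod_{j=1}^{n-1}\Pi_j^N(H_{j-1})$ is $\mathcal{F}_{n-1}$-measurable. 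Combining these two facts gives $\bbE[S_{n,J}^N(\psi) \mid \mathcal{F}_{n-1}] = 0$.

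Given this, I would expand the square of the total error to obtain
\[
\bbE\bigl[(F_J^N(\psi) - F(\psi))^2\bigr] = \sum_{n=1}^J \bbE\bigl[(S_{n,J}^N(\psi))^2\bigr],
\]
the cross-terms vanishing by iterated conditioning together with the martingale property above. Each diagonal summand is then handled by conditioning on $\mathcal{F}_{n-1}$ and applying the elementary i.i.d.\@ variance bound $\mathrm{Var}(\bar{Y}_N) \leq \bbE[Y^2]/N$ with $Y = Q_{n,J}(\psi)(\bx_n^1)$, together with the $\mathcal{F}_{n-1}$-measurability of $F_n^N(1)$:
\[
\bbE\bigl[(S_{n,J}^N(\psi))^2 \,\big|\, \mathcal{F}_{n-1}\bigr] \leq \frac{(F_n^N(1))^2}{N}\,\Phi_n(\Pi_{n-1}^N)\bigl(Q_{n,J}(\psi)^2\bigr).
\]
Assumption \ref{ass:G1} then furnishes a uniform bound $F_n^N(1) \leq C^{n-1}$ (finite since $J$ is fixed), and a final application of the tower property, using the identity $\bbE[\Phi_n(\Pi_{n-1}^N)(Q_{n,J}(\psi)^2)] = \bbE[Q_{n,J}(\psi)^2(\bx_n^1)]$ which is just the conditional law of $\bx_n^1$ given $\mathcal{F}_{n-1}$, yields the claim after summation in $n$.

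I do not expect any serious obstacle. The only minor bookkeeping concerns the boundary cases $n=1$ (empty product gives $F_1^N(1)=1$ and the $\bx_1^i$ are i.i.d.\@ draws from $\pi_1 = \Pi_0$ directly, so $\Phi_1 = \mathrm{Id}$ as declared) and $n=J$ (where $Q_{J,J}$ is the identity and $\Pi_J^N(\psi)$ appears without further propagation); both slot into the framework without modification. The entire argument is standard for Feynman--Kac particle systems; what makes the formulation useful downstream is that the right-hand side retains the $F$-dependence through $\bbE[Q_{n,J}(\psi)^2(\bx_n^1)]$ rather than collapsing to $\|\psi\|_\infty^2$, which is precisely what will later be combined with Lemma \ref{lem:main} to extract the product-form variance rates needed for Theorem \ref{thm:main}.
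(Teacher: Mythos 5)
Your proof is correct and follows essentially the same route as the paper: both start from the martingale decomposition \eqref{eq:mrt}, use the $\mathcal{F}_{n-1}$-measurability of $F_n^N(1)$ together with the conditional i.i.d.\ structure of $\bx_n^1,\dots,\bx_n^N$ given $\mathcal{F}_{n-1}$, and close with the uniform bound on $F_n^N(1)$ from Assumption~\ref{ass:G1}. The only cosmetic difference is that the paper invokes the Burkholder--Davis--Gundy and conditional Marcinkiewicz--Zygmund inequalities, whereas you observe that in the $p=2$ case these reduce to exact orthogonality of martingale differences and the elementary i.i.d.\ conditional variance bound --- slightly sharper (an equality for the first step rather than an inequality with a constant) but substantively identical.
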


\begin{proof}
Following from \eqref{eq:mrt} we have
\begin{eqnarray}\nonumber
\bbE(F_{J}^N(\psi) - F(\psi))^2 &\leq&  C \sum_{n=1}^J \bbE[(S_{n,J}^N(\psi))^2] \\ \nonumber
&\leq& \frac{C}{N} \bbE[(Q_{n,J}(\psi) (\bx_n^1))^2 ] \, .
\end{eqnarray}
The first inequality results from application of the Burkholder-Gundy-Davis inequality. 
The second inequality follows via an application of the conditional Marcinkiewicz-Zygmund inequality and the fact that $F_n^N(1)$
is upper-bounded by a constant via Assumption \ref{ass:G1}.
\end{proof}

\begin{lem}\label{lem:qp}
Assume Assumption \ref{ass:G1}.
Then for any $J\in\mathbb{N}$ and $n\in\{1,\dots,J\}$ there exists a $C>0$ such that for any $N\in\mathbb{N}$ and $\psi:\sX^{2^D} \rightarrow \bbR$ bounded
and measurable
$$
(Q_{n,J}(\psi)(\bx_n))^2 \leq C Q_{n,J}(\psi^2)(\bx_n) \, .
$$
\end{lem}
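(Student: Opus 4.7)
The plan is to derive the inequality directly from the Cauchy--Schwarz inequality applied to the kernel defining $Q_{n,J}$. Specifically, I will regard
$$
\mu_{n,J}(d\bx_{n+1:J}) := \prod_{j=n+1}^{J} H_{j-1}(\bx_{j-1})\, \bm\cM_j(\bx_{j-1}, d\bx_j)
$$
as a (non-negative, but non-probability) measure on $\sX^{2^D \times (J-n)}$ for each fixed $\bx_n$, so that $Q_{n,J}(\psi)(\bx_n) = \int \psi(\bx_J)\, \mu_{n,J}(d\bx_{n+1:J})$.

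First I would apply Cauchy--Schwarz with respect to this measure to obtain
$$
\bigl(Q_{n,J}(\psi)(\bx_n)\bigr)^2 = \Bigl(\int \psi(\bx_J) \cdot 1 \, \mu_{n,J}(d\bx_{n+1:J})\Bigr)^2 \leq \mu_{n,J}(1) \cdot \int \psi(\bx_J)^2 \, \mu_{n,J}(d\bx_{n+1:J}) \, .
$$
The second factor is precisely $Q_{n,J}(\psi^2)(\bx_n)$, so it remains only to bound $\mu_{n,J}(1) = Q_{n,J}(1)(\bx_n)$ uniformly in $\bx_n$.

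Next I would bound $Q_{n,J}(1)(\bx_n)$ by induction (or equivalently by iterated integration over $\bx_J, \bx_{J-1}, \dots, \bx_{n+1}$). Using that each $\bm\cM_j(\bx_{j-1}, \cdot)$ is a Markov kernel, the innermost integration over $\bx_j$ of $\bm\cM_j(\bx_{j-1}, d\bx_j)$ yields $1$, so each level contributes only the supremum of $H_{j-1}$. By Assumption \ref{ass:G1} there is a constant $c>0$ such that $H_{j-1}(\cdot) \leq c$ uniformly in $j$, and we obtain
$$
Q_{n,J}(1)(\bx_n) \leq c^{\,J-n} \, .
$$
Combining the two displays gives the claim with $C := c^{J-n}$, which is finite and independent of $\bx_n$ and of $N$.

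There is essentially no main obstacle here; the only thing to be careful about is avoiding the temptation to normalize $\mu_{n,J}$ into a probability measure (which would leave an unwanted $Q_{n,J}(1)$ factor in the $\psi^2$ term), and to note that the role of Assumption \ref{ass:G1} is only the uniform \emph{upper} bound on $H_{j-1}$, not the lower bound used elsewhere.
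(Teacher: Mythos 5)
Your proof is correct and is essentially the same as the paper's: the paper normalizes $Q_{n,J}(\cdot)(\bx_n)$ into a probability measure and applies Jensen's inequality for the square, which yields exactly the same bound $(Q_{n,J}(\psi))^2 \leq Q_{n,J}(1)\,Q_{n,J}(\psi^2)$ as your Cauchy--Schwarz step against the unnormalized measure $\mu_{n,J}$. Your only addition is spelling out the bound $Q_{n,J}(1)(\bx_n) \leq c^{J-n}$ via the Markov property and the upper bound on $H_{j-1}$ from Assumption \ref{ass:G1}, a detail the paper leaves implicit by simply setting $C = \sup_{\bx_n} Q_{n,J}(1)(\bx_n)$.
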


\begin{proof}
Observe that for any $\bx_n \in \sX^{2^D}$,
$Q_{n,J}(\psi)(\bx_n)/Q_{n,J}(1)(\bx_n)$ is a probability distribution.
Therefore, Jensen's inequality provides 
\begin{eqnarray}\nonumber
(Q_{n,J}(\psi)(\bx_n))^2  &=& (Q_{n,J}(1)(\bx_n))^2 \left( \frac{Q_{n,J}(\psi)(\bx_n)}{Q_{n,J}(1)(\bx_n)}\right)^2 \\
\nonumber
& \leq & Q_{n,J}(1)(\bx_n) Q_{n,J}(\psi^2)(\bx_n) \, .
\end{eqnarray}
The result follows with $C = \sup_{\bx_n\in \sX^{2^D}} Q_{n,J}(1)(\bx_n)$.
\end{proof}

\begin{lem}\label{lem:induc}
Assume Assumption \ref{ass:G1}.
Then for any $J\in\mathbb{N}$ and $n\in\{1,\dots,J\}$ there exists a $C>0$ such that for any $N\in\mathbb{N}$  and any  $\psi:\sX^{2^D} \rightarrow \bbR$ bounded
and measurable
$$
 \bbE[ Q_{n,J}(\psi^2)(\bx_n^1) ] \leq C F(\psi^2).
$$
\end{lem}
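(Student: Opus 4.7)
}
The plan is to prove the estimate by induction on $n$, moving backwards through the particle system one generation at a time, and to use Assumption \ref{ass:G1} (which provides uniform upper and lower bounds on the potentials $H_{j-1}$, and hence also on the unnormalized weights and the normalizing constants) to absorb all constants into a single $C$.

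For the base case $n=1$, recall that $\bx_1^1 \sim \Pi_0 = F_1$ exactly, so by the representation $F(\psi^2) = \Pi_0(Q_{1,J}(\psi^2))$ noted just before \eqref{eq:mrt}, we have $\bbE[Q_{1,J}(\psi^2)(\bx_1^1)] = F(\psi^2)$ with constant one. For the inductive step, I would condition on $\bx_{n-1}^{1:N}$ and write out the one-step mechanism of Algorithm \ref{algo:smccoup}: the selection index satisfies $I_n^1 \sim \{w_n^1,\dots,w_n^N\}$ with $w_n^i = H_{n-1}(\bx_{n-1}^i)/\sum_k H_{n-1}(\bx_{n-1}^k)$, and then $\bx_n^1 \sim \bm\cM_n(\bx_{n-1}^{I_n^1},\cdot)$. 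This gives
\begin{equation*}
\bbE\bigl[Q_{n,J}(\psi^2)(\bx_n^1) \bigm| \bx_{n-1}^{1:N}\bigr]
= \sum_{i=1}^N w_n^i \int \bm\cM_n(\bx_{n-1}^i,d\bx)\, Q_{n,J}(\psi^2)(\bx).
\end{equation*}

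By Assumption \ref{ass:G1}, each $w_n^i \leq C/N$, so dropping the weights and using exchangeability of the $\{\bx_{n-1}^i\}_{i=1}^N$ under the algorithm's law yields
\begin{equation*}
\bbE\bigl[Q_{n,J}(\psi^2)(\bx_n^1)\bigr]
\leq C\, \bbE\!\left[\int \bm\cM_n(\bx_{n-1}^1,d\bx)\, Q_{n,J}(\psi^2)(\bx)\right].
\end{equation*}
The key algebraic identity is that, directly from the definition \eqref{eq:q},
\begin{equation*}
Q_{n-1,J}(\psi^2)(\bx_{n-1}) = H_{n-1}(\bx_{n-1}) \int \bm\cM_n(\bx_{n-1},d\bx_n)\, Q_{n,J}(\psi^2)(\bx_n),
\end{equation*}
so using the lower bound $H_{n-1} \geq C^{-1}$ from Assumption \ref{ass:G1} one obtains
\begin{equation*}
\int \bm\cM_n(\bx_{n-1},d\bx_n)\, Q_{n,J}(\psi^2)(\bx_n) \leq C\, Q_{n-1,J}(\psi^2)(\bx_{n-1}).
\end{equation*}
Combining gives the one-step recursion $\bbE[Q_{n,J}(\psi^2)(\bx_n^1)] \leq C\, \bbE[Q_{n-1,J}(\psi^2)(\bx_{n-1}^1)]$, and iterating down to the base case $n=1$ yields the claim with constant $C^{J-1}$, which depends only on $J$ (fixed) and the constants from \ref{ass:G1}.

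The main obstacle is essentially bookkeeping: verifying that the constants $C$ at each stage (the uniform upper bound on $w_n^i$, the upper bound $1/H_{n-1} \leq C$, and the normalizing constant bounds) can indeed all be traced back to Assumption \ref{ass:G1}, and that the exchangeability step---reducing the sum $\sum_i$ to a single expectation over $\bx_{n-1}^1$---is legitimate. Exchangeability holds because the algorithm treats all particle indices symmetrically at the selection and mutation stages, so the joint law of $\bx_{n-1}^{1:N}$ is invariant under permutations; no independence is needed. Nothing in the argument requires finer martingale decompositions, so this lemma is really a routine consequence of the strong mixing condition built into \ref{ass:G1}, which is why the bound comes out cleanly.
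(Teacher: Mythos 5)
Your proof is correct, and it takes a genuinely different and arguably cleaner route than the paper's. The paper also proceeds by induction, but at the inductive step it works abstractly with $\Phi_n(\Pi_{n-1}^N)[Q_{n,J}(\psi^2)]$: it adds and subtracts $\Phi_n(\Phi_{n-1}(\Pi_{n-2}^N))[Q_{n,J}(\psi^2)]$, handles the leading term $T_1$ exactly as you handle yours (via $Q_{n-1,J}(\psi^2) = H_{n-1} \bm\cM_n Q_{n,J}(\psi^2)$ and the lower bound on $H_{n-1}$), and then separately bounds the remainder $T_2$, which requires a further split into $T_{2,1}+T_{2,2}$, triangle-inequality arguments, the martingale structure, and two more invocations of the inductive hypothesis. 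You instead expand the conditional law of $\bx_n^1$ given $\bx_{n-1}^{1:N}$ explicitly as the mixture $\sum_i w_n^i \bm\cM_n(\bx_{n-1}^i,\cdot)$, observe that under Assumption \ref{ass:G1} each weight is bounded by $C/N$ uniformly, and then collapse the sum to a single particle. This collapse is what exchangeability buys you; in fact you do not even need full exchangeability of the particle configuration --- it suffices that $\bx_{n-1}^1,\dots,\bx_{n-1}^N$ have a common marginal conditional law $\Phi_{n-1}(\Pi_{n-2}^N)$ given $\bx_{n-2}^{1:N}$, which is immediate from the definition of the algorithm. The result is a one-step recursion $\bbE[Q_{n,J}(\psi^2)(\bx_n^1)] \leq C\,\bbE[Q_{n-1,J}(\psi^2)(\bx_{n-1}^1)]$ with no remainder to control. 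Both proofs yield a constant growing geometrically in $J$, which is fine since $J$ is fixed; yours is shorter and more transparent about exactly where Assumption \ref{ass:G1} is used (twice per step: once to bound the weights from above, once to bound $H_{n-1}$ from below).
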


\begin{proof} We proceed by induction. 
The result for $n=1$ follows immediately from Lemma \ref{lem:qp} and
the fact that we defined $\Phi_1(\Pi_1) = \Pi_1 = F_1 = \Pi_0$:
\begin{eqnarray}\nonumber
\Pi_0(Q_{1,J}(\psi^2)(\bx_1)) &=& 
\int_{\sX^{2^D \times J}} \psi^2(\bx_J) \Pi_0(\bx_1) \prod_{j=1}^{J-1} H_j(\bx_j) \bm\cM_{j+1}(\bx_j,\bx_{j+1}) d\bx_{1:J} \\
\nonumber
&=& \int_{\sX^{2^D \times J}} \psi^2(\bx_J) F(\bx_J) d\bx_J \, .
\end{eqnarray}

Now, assume the result holds for $n-1$:
\begin{equation}\label{eq:induc}
 \bbE[ Q_{n-1,J}(\psi^2)(\bx_{n-1}^1) ] = 
\bbE(\Phi_{n-1}(\Pi_{n-2}^N)[ Q_{n-1,J}(\psi^2) ] ) \leq C F(\psi^2) \, ,
\end{equation}
and we will show that this implies it holds for $n$.

We have that
\begin{eqnarray}\nonumber
\Phi_{n}(\Pi_{n-1}^N)[ Q_{n,J}(\psi^2) ] &=& \underbrace{\Phi_n(\Phi_{n-1}(\Pi_{n-2}^N))[ Q_{n,J}(\psi^2) ]}_{T_1}  \\
\nonumber
&+& \underbrace{\{\Phi_{n}(\Pi_{n-1}^N)-\Phi_n(\Phi_{n-1}(\Pi_{n-2}^N))\}[ Q_{n,J}(\psi^2) ]}_{T_2} \, .
\end{eqnarray}
We consider bounding the expectations of $T_1$ and $T_2$ in turn. 

\noindent {$\bm{T_1}$. } We have 
\begin{eqnarray*}
&&\Phi_n(\Phi_{n-1}(\Pi_{n-2}^N))[ Q_{n,J}(\psi^2) ] = \frac1{\Phi_{n-1}(\Pi_{n-2}^N)(H_{n-1})} \\
&\times&
\int_{\sX^{2^D \times 2}} \Phi_{n-1}(\Pi_{n-2}^N)(d\bx_{n-1}) 
H_{n-1}(\bx_{n-1}) \bm\cM_n(\bx_{n-1},d\bx_n) Q_{n,J}(\psi^2)(\bx_n) \, .
\end{eqnarray*}
By  Assumption \ref{ass:G1} $\inf_{\bx} H_{n-1}(\bx)\geq C^{-1}$ and 
$$
Q_{n-1,J}(\psi^2)(\bx_{n-1}) = \int_{\sX^{2^D}} H_{n-1}(\bx_{n-1}) \bm\cM_n(\bx_{n-1},d\bx_n) Q_{n,J}(\psi^2)(\bx_n) \, .
$$
Therefore by the inductive hypothesis
\begin{eqnarray*}
\bbE \left( \Phi_n(\Phi_{n-1}(\Pi_{n-2}^N))[ Q_{n,J}(\psi^2) ] \right) 
&\leq& C \bbE\left ( \Phi_{n-1}(\Pi_{n-2}^N)[Q_{n-1,J}(\psi^2)] \right) \\
& \leq & C F(\psi^2) \, .
\end{eqnarray*}

\noindent {$\bm{T_2}$. } For the second term,  we have
$$
\left | \{\Phi_{n}(\Pi_{n-1}^N)-\Phi_n(\Phi_{n-1}(\Pi_{n-2}^N))\}[ Q_{n,J}(\psi^2) ] \right | = 
$$
$$
\left | \frac{\Pi_{n-1}^N(H_{n-1}M_nQ_{n,J}(\psi^2))}{\Pi_{n-1}^N(H_{n-1})} 
- 
\frac{\Phi_{n-1}(\Pi_{n-2}^N)(H_{n-1}M_nQ_{n,J}(\psi^2))}{\Phi_{n-1}(\Pi_{n-2}^N)(H_{n-1})} \right | \leq
$$
$$
\underbrace{\left | \frac1{{\Pi_{n-1}^N(H_{n-1})}}( \Pi_{n-1}^N - \Phi_{n-1}(\Pi_{n-2}^N))(Q_{n-1,J}(\psi^2)) \right |}_{T_{2,1}} +
$$
$$
\underbrace{\left | \frac{(\Phi_{n-1}(\Pi_{n-2}^N) -  \Pi_{n-1}^N)(H_{n-1})}
{\Pi_{n-1}^N(H_{n-1}) \Phi_{n-1}(\Pi_{n-2}^N)(H_{n-1})} \Phi_{n-1}(\Pi_{n-2}^N)(Q_{n-1,J}(\psi^2))  \right |}_{T_{2,2}} \, .
$$
These two terms are now considered.

{\noindent {$\bm{T_{2,1}}$. } 
The expected value of $T_{2,1}$ can be bounded as follows
\begin{eqnarray*}
  \mathbb E \left[ 
	\left|
		\left(
			\Pi_{n-1}^N - \Phi_{n-1} ( \Pi_{n-2}^N)
		\right) 
		(Q_{n-1,J}) (\psi^2)
	\right|
\right] 
&
\leq
&
\mathbb E\left[
	\left|
		\Pi_{n-1}^N(Q_{n-1,J})(\psi^2)
	\right|
\right]\\
&&+ \ \ 
\mathbb E \left[
	\left|
		\Phi_{n-1}(\Pi_{n-1}^N)(Q_{n-1,J}(\psi^2))
	\right|
\right] 
\\
&=
&
\mathbb E\left[
		\Pi_{n-1}^N(Q_{n-1,J})(\psi^2)
\right]\\
&&+ \ \ 
\mathbb E \left[
		\Phi_{n-1}(\Pi_{n-1}^N)(Q_{n-1,J}(\psi^2))
\right]
\\
&\leq 
&
2 
\mathbb E \left[
		\Phi_{n-1}(\Pi_{n-1}^N)(Q_{n-1,J}(\psi^2))
\right]
\\
&\leq 
&
2C F (\psi^2) \, .
\end{eqnarray*}
Where the the triangle inequality is used in the first line, 
positivity is used in the second, the Martingale property is used in the third,
and the induction hypothesis is used to conclude. 
Thus, after appropriately redefining the constant $C$, we have that
$$
\mathbb{E}[T_{2,1}] \leq CF(\psi^2).
$$
}

\noindent {$\bm{T_{2,2}}$. } Finally, for the second term, 
note that by Assumption \ref{ass:G1} there is a $C<\infty$ such that
$$
\left | \frac{(\Phi_{n-1}(\Pi_{n-2}^N) -  \Pi_{n-1}^N)(H_{n-1})}
{\Pi_{n-1}^N(H_{n-1}) \Phi_{n-1}(\Pi_{n-2}^N)(H_{n-1})} \right | \leq C \, .
$$
Thus, by again applying the induction hypothesis \eqref{eq:induc} one has that
$$
\mathbb{E}[T_{2,2}] \leq CF(\psi^2)
$$
and this suffices to complete the proof.
\end{proof}
}

\section{Numerical Results}
\label{ref:numerics}

The codes for the numerical tests can be founded in \url{https://github.com/Shangda-Yang/MISMCRE.git}.

First we considered the toy example of a 1D DE simplification of the PDE
introduced in subsection \ref{sec:pde}. Since the method reduces to a multilevel
method in this case, the results are provided in Appendix \ref{app:1dtoy}.

\subsection{2D Elliptic PDE with random diffusion coefficient}
In this subsection, we look at the 2D elliptic PDE with random diffusion coefficient
from subsection \ref{sec:pde}. 
The problem is defined in \eqref{eq:elliptic}-\eqref{eq:boundary}.
The domain of interest is $\Omega = [0,1]^2$, the forcing term is $\mathsf{f} = 100$,
$a(x)(z) = 3+x_1\cos(3z_1)\sin(3z_2)+x_2\cos(z_1)\sin(z_2)$, 
and the prior is 
$x\sim U[-1,1]^2$. The observation operator and observation take the form of \eqref{eq:obop} and \eqref{eq:ob} respectively. 

Let the observations be given at a set of four points - \{(0.25,0.25), (0.25,0.75), (0.75,0.25), (0.75,0.75)\}. Corresponding observations are generated by 
$y = u_{\alpha}(x^*) + \nu$, 
where $u_{\alpha}(x^*)$ is the approximate solution of the PDE at {$\alpha = [10,10]$ with $x^*=[-0.4836, -0.5806]$ drawn from $U[-1,1]^2$},
 and $\nu \sim N(0,0.5^2)$. Due to the zero Dirichlet boundary condition, the solution is zero when $\alpha_i = 0$ and $\alpha_i=1$ for $i = 1,2$. 
 So we set $\alpha_i \leftarrow \alpha_i + 2$ for $i = 1,2$ as the starting indices. 
 {The 2D PDE solver applied here is modified based on a MATLAB toolbox called IFISS \cite{ers07} such that the solver can accept a random coefficient and solve the problem of interest. \blu{The algorithm is applied with Metropolis-Hastings method and a fixed tempering schedule for all $\alpha$, where $J = 3$
 and $\tau_j=(j-1)/2$.}

For this example, we have $s_1 = s_2 = 2$ and $\beta_1 = \beta_2 = 4$ 
for the mixed rates 
corresponding to Assumptions \ref{ass:rate}, which implies that 
along the diagonal $\alpha_1 = \alpha_2$ the 
rates for $\Delta F_\alpha$ are
$s_1 + s_2 = 4$ and $\beta_1 + \beta_2 = 8$. 
This is shown in Figures \ref{fig:2DnlinMISMC1}
and \ref{fig:2DnlinMISMC12}.
The contour plot \ref{fig:2DnlinContour} performs a more general illustration. For the multilevel formulation, $s = 2$ and $\beta = 4$, which can be observed from Figure \ref{fig:2DnlinMLSMC}.

Considering the quantity of interest $x_1^2 + x_2^2$, MSE in the Figure \ref{fig:2DnlinRate} are calculated with 200 realisations. Total computational costs are computed with the same idea as the previous questions. The reference solution is computed by MLSMC instead of MISMC to avoid errors in algorithm. MISMC algorithm is carried on with the two different index sets mentioned above - tensor product and total degree index set. According to the rates of regression in the caption of Figure \ref{fig:2DnlinRate}, both MLSMC and MISMC with the self-normalised increment estimator and the ratio estimator have the rate of convergence close to -1 falling into the canonical case, which is as expected.

The advantage of MISMC can be shown in the following examples, where we can only achieve subcanonical rates with MLSMC and MISMC with the tensor product index set but the canonical rate with MISMC with the total degree index set. It is worth to note that this advantage is because the total degree index set which abandons the most expensive estimation.

\begin{figure}[H]
    \centering
    \includegraphics[width=.6\linewidth]{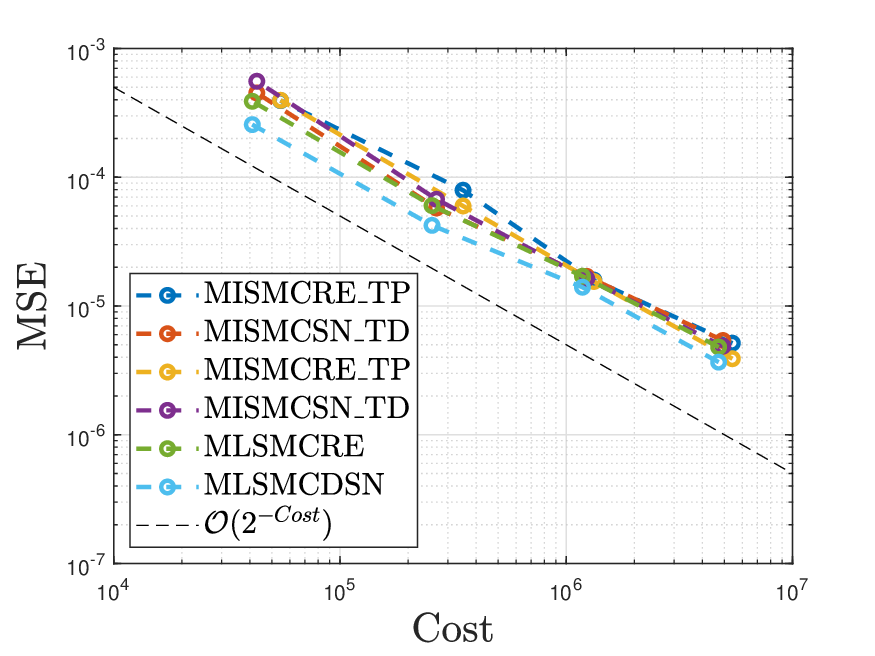}
    \caption{2D Elliptic PDE with random diffusion coefficient rate of convergence for MLSMC and MISMC with the self-normalised increments estimator and the ratio estimator, where MISMC is applied with the tensor product index set and the total degree index set. Each MSE is computed with 200 realisations. Rates of regression: (1) MISMCSN\_TP: $-1.007$ (2) MISMCSN\_TD: $-0.996$ (3) MISMCRE\_TP: $-0.964$ (4) MISMCRE\_TD: $-0.925$ (5) MLSMCSN: $-0.880$ (6) MLSMCRE: $-0.918$. }
    \label{fig:2DnlinRate}
\end{figure}

\subsection{Log-Gaussian Process Models}

After considering the PDE examples in previous subsections, 
we show the numerical results of the 
LGC model introduced in subsection \ref{sec:lgc}. {The parameters are chosen as $\theta=(\theta_1,\theta_2,\theta_3) = (0, 1, 110.339)$.} For this particular example, the increment rates associated to MLSMC are 
$s = 0.8$ and $\beta = 1.6$, while the mixed rates associated to 
MISMC are $s_{i} = 0.8$ and $\beta_{i} = 1.6$ for $i = 1,2$. 
The rates for $s$ and $\beta$ can be observed from the Figure \ref{fig:LGCMLSMC} 
and mixed rates 
for $s_i$ and $\beta_i$ for $i = 1,2$ can be observed from the Figures 
\ref{fig:LGCMISMC2}, \ref{fig:LGCMISMC12} and \ref{fig:LGCContour}. 
{This forward simulation method 
has a cost rate of $\gamma = 2 + \omega$, for any $\omega > 0$, 
while the traditional full factorization method used in \cite{heng_jacob_2019} 
(and references therein) has $\gamma=6$.
However, one has $\gamma_i=1+\omega < \beta_i < \gamma$. 
This means circulant embedding will deliver a single level complexity of approximately 
MSE$^{-9/4}$, 
while the traditional grid-based approach has complexity MSE$^{-19/4}$. 
An implementation of MLMC delivers MSE$^{-5/4 - \omega}$. 
Finally, MIMC with TD index set ($\delta_{i} = 0.5$ for $i = 1,2$)
delivers {\em canonical complexity of} MSE$^{-1}$.
Note that, because $\sum_{j=1}^{2}\frac{\gamma_{j}}{s_{j}} = 5/2 > 2$, 
the important assumption $\sum_{j=1}^{2}\frac{\gamma_{j}}{s_{j}} \leq 2$ 
for MISMC with TP index set 
is violated, the cost of the finest level samples
dominates the total cost, and MISMC TP 
therefore has the same sub-canonical complexity as MLMC.}

\blu{SMC sampler is applied with the 
pre-conditioned Crank-Nicolson (pCN) MCMC \cite{pcn, nealpcn} 
as the mutation kernel and adaptive tempering described in Remark \ref{rem:temp}.}
The quantity of interest is taken as $\varphi(x)=\int_{[0,1]^2}\exp(x(z))dz$ and 
$\alpha_{i} \leftarrow \alpha_i + 5$ for $i = 1,2$ are the starting indices.
Figure \ref{fig:LGCRate}, and the rate of regression in the caption, 
show the above claims that MISMC TD 
is canonical with rate of convergence close to -1 and MLSMC is subcanonical. 
MISMC TP 
is not included here since the 
computational complexity of this method is the same as that of MLSMC for this example. The only difference between the two methods is the constant. 
Compared with MLSMC, MISMC TP 
has extra indices and two extra terms at all internal indices.
This means that MISMC TP 
has a larger constant than MLSMC. 
MISMC TD 
turns the computational complexity from subcanonical, 
which all that is achievable with MISMC TP, 
MLSMC, and SMC, 
to canonical, indicating the benefits of MISMC TD. 

\begin{figure}[H]
    \centering
    \includegraphics[width=.6\linewidth]{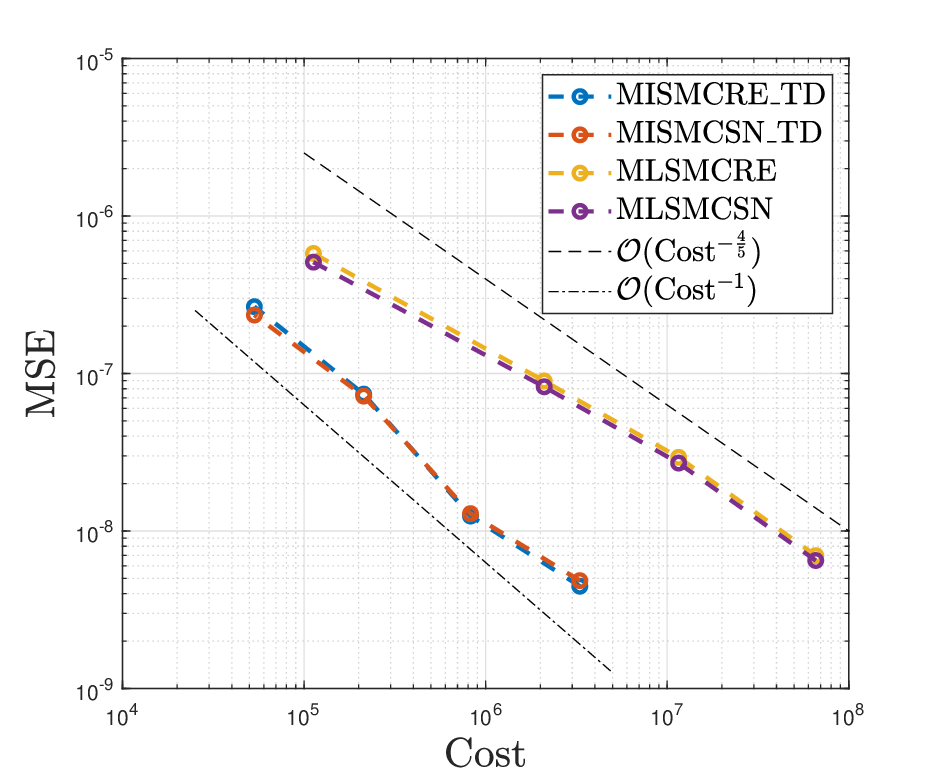}
    \caption{LGC model with MLSMC and MISMC with the self-normalised increments estimator and the ratio estimator, where MISMC corresponds to 
    the total degree index set. 
    Each MSE is computed with 100 realisations. 
    Rate of regression: (1) MISMCRE\_TD: -1.022 (2) MISMCSN\_TD: -0.973 (3) {MLSMCRE: -0.686} (4) {MLSMCSN: -0.677}.}
    \label{fig:LGCRate}
\end{figure}

{\subsection{Log-Gaussian Process Model}
In this subsection, we consider the LGP model introduced in subsection \ref{sec:lgc}. By changing the likelihood and parameters accordingly {as $\theta=(\theta_1,\theta_2,\theta_3) = (0, 1, 27.585)$}, the LGP model follows the same analysis as the LGC model in the previous subsection and gives the same numerical results for regularity and complexity. More precisely, the increment rates associated to MLSMC are $s = 0.8$ and $\beta = 1.6$, and the mixed rates associated to MISMC are $s_{i} = 0.8$ and $\beta_{i} = 1.6$ for $i = 1,2$, which are the same as LGC. The Figure \ref{fig:LGPMLSMC} shows the increment rates for $s$ and $\beta$ and the Figure \ref{fig:LGPMISMC2}, \ref{fig:LGPMISMC12} and \ref{fig:LGPContour} shows the mixed rates for $s_i$ and $\beta_i$ for $i = 1,2$. The rates corresponding to the computational costs of MLSMC and MISMC are $\gamma = 2+\omega$ and $\gamma_i = 1+\omega<\gamma$, for any $\omega>0$, respectively. Being the same as that of LGC, the complexity of LGP is MSE$^{-5/4 - \omega}$ with MLSMC and MSE$^{-1}$ with MISMC.} 

\blu{As above, SMC sampler is applied with the 
pre-conditioned Crank-Nicolson (pCN) MCMC \cite{pcn, nealpcn} 
as the mutation kernel and adaptive tempering described in Remark \ref{rem:temp}.}
{Considering the quantity of interest $\varphi(x)=\int_{[0,1]^2}\exp(x(z))dz$ and letting the starting indices $\alpha_{i} \leftarrow \alpha_i + 5$ for $i = 1,2$,
Figure \ref{fig:LGPRate}, and the rate of regression in the caption, 
show the same claims that MISMC TD is canonical with rate of convergence close to -1 and MLSMC is subcanonical.}

\begin{figure}[H]
    \centering
    \includegraphics[width=.6\linewidth]{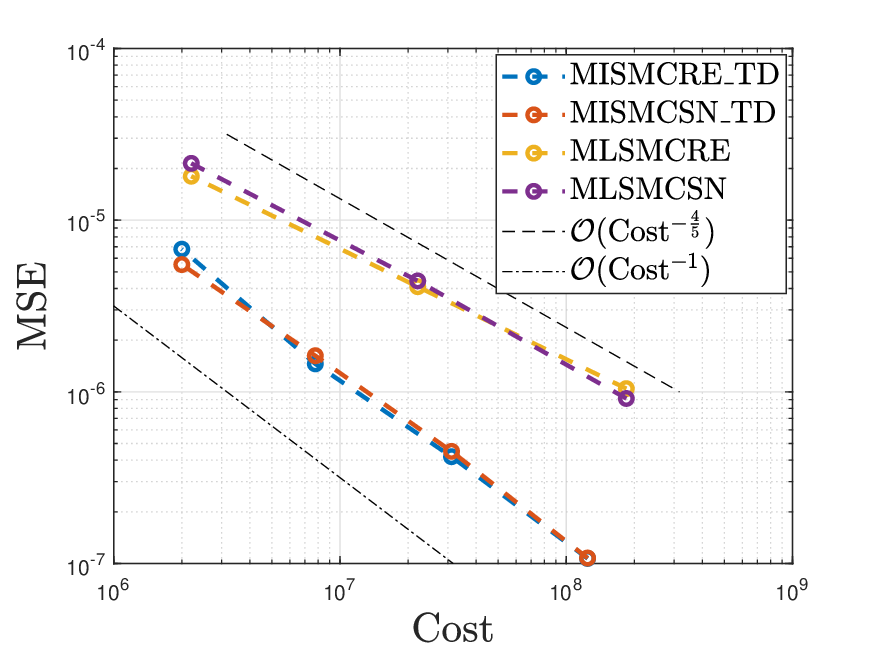}
    \caption{LGP model with MLSMC and MISMC with the self-normalised increments estimator and the ratio estimator, where MISMC corresponds to 
    the total degree index set. 
    Each MSE is computed with 100 realisations. 
    Rate of regression: (1) MISMCRE\_TD: -0.994 (2) MISMCSN\_TD: -0.950 (3) {MLSMCRE: -0.643} (4) {MLSMCSN: -712}.}
    \label{fig:LGPRate}
\end{figure}

\subsubsection*{Acknowledgements}

AJ was supported by KAUST baseline funding.

\appendix

\section{Proofs relating to Theorem \ref{thm:main2} and Remark 
\ref{thm:main1}}
\label{app:b}

Let $B_{\alpha} = F_\alpha(\psi_{\zeta_\alpha})$, 
and recall from Assumption \ref{ass:rate} (V) that 
$$\bbE\left[\left(F^{N_\alpha}_\alpha(\psi_{\zeta_\alpha}) - F_\alpha(\psi_{\zeta_\alpha})\right)^2\right] \leq V_\alpha/N_\alpha \, ,$$
and from Assumption \ref{ass:rate} (C) that the 
total computational cost is $\sum_{\alpha\in\cI}N_{\alpha}C_{\alpha}$. 
Following from Theorem \ref{thm:main}, 
we have
\begin{equation}\label{eq:MSEVB}
\bbE [ ( \widehat \varphi^{\rm MI}_\cI - \pi(\varphi) )^2 ] \leq C \max_{\zeta \in \{\varphi, 1\}} 
\left(\sum_{\alpha \in \cI}\frac{V_{\alpha}}{N_{\alpha}} + \left(\sum_{\alpha \notin \cI}B_{\alpha}\right)^2\right).
\end{equation}
Then, $\bbE [ ( \widehat \varphi^{\rm MI}_\cI - \pi(\varphi) )^2 ]$ is less than $C\varepsilon^2$ as long as both $\max_{\zeta \in \{\varphi, 1\}}\sum_{\alpha \in \cI}V_{\alpha}$ and $\max_{\zeta \in \{\varphi, 1\}}\left(\sum_{\alpha \notin \cI}B_{\alpha}\right)^2$ are of $\cO(\varepsilon^2)$. 
We can now prove the Remark \ref{thm:main1} as follows.
\begin{proof}
We start from inequality (\ref{eq:MSEVB}) and replace the general index set $\cI$ by the tensor product index set $\mathcal{I}_{L_1:L_D}:=\{\alpha \in \bbN^{d}:\alpha_1 \in \{0,...,L_1\},...,\alpha_D \in \{0,...,L_D\}\}$. Let $L_i = \lceil\log_{2}(D/\varepsilon)/s_i\rceil$, for $i = 1,...,D$, where $\lceil\cdot\rceil$ denotes ceiling a noninteger to an integer. The bias term is bounded as follows
\begin{equation*}
\sum_{\alpha \notin \mathcal{I}_{L_1:L_D}}B_{\alpha} = \sum_{\alpha \notin \mathcal{I}_{L_1:L_D}}F_\alpha(\psi_{\zeta_\alpha}) \leq C\sum_{\alpha \notin \mathcal{I}_{L_1:L_D}}\prod_{i=1}^D 2^{-\alpha_i s_i} \leq C\sum_{i=1}^D 2^{- L_i s_i},
\end{equation*}
where the inequality above follows from Assumption \ref{ass:rate}(B).
Substituting $L_i$ in the inequality, the bias term is of $\cO(\varepsilon)$. By Lemma \ref{lem:Lag}, $\sum_{\alpha \in \cI_{L_1:L_D}}V_{\alpha}/N_{\alpha}$ is minimised and equals $\varepsilon^2$ by choosing 
$$N_{\alpha} = \varepsilon^{-2}\left(\sum_{\alpha'\in\mathcal{I}_{L_1:L_D}}\sqrt{V_{\alpha'}C_{\alpha'}}\right)\l\sqrt{\frac{V_{\alpha}}{C_{\alpha}}}.$$
The sample size can only be treated as an integer and there should be at least one sample in each multi-index of resolution. So let the upper bound of $N_{\alpha}$ be $$N_{\alpha} \leq 1 + \varepsilon^{-2}\left(\sum_{\alpha'\in\mathcal{I}_{L_1:L_D}}\sqrt{V_{\alpha'}C_{\alpha'}}\right)\l\sqrt{\frac{V_{\alpha}}{C_{\alpha}}}.$$
Then, the total computational cost $C_{\mathcal{I}_{L_1:L_D}}$ is given by $$C_{\mathcal{I}_{L_1:L_D}} = \sum_{\alpha \in \mathcal{I}_{L_1:L_D}}N_{\alpha}C_{\alpha} = \cO\left(\varepsilon^{-2}\left(\sum_{\alpha\in\mathcal{I}_{L_1:L_D}}\sqrt{V_{\alpha}C_{\alpha}}\right)^{2} + \sum_{\alpha\in\mathcal{I}_{L_1:L_D}}C_{\alpha} \right).$$ 

By Assumption \ref{ass:rate}, we have  $$\sum_{\alpha\in\mathcal{I}_{L_1:L_D}}\sqrt{V_{\alpha}C_{\alpha}} \leq \sum_{\alpha \in \mathcal{I}_{L_1:L_D}}C\prod_{i=1}^{D}2^{\alpha_{i}(\gamma_{i}-\beta_{i})/2} = \left(\prod_{i=1}^{D}C\sum_{\alpha_{i}=0}^{L_{i}}2^{\alpha_{i}(\gamma_{i}-\beta_{i})/2} \right).$$ Since $\beta_i > \gamma_i$, $\sum_{\alpha\in\mathcal{I}_{L_1:L_D}}\sqrt{V_{\alpha}C_{\alpha}} = \cO(1)$. In addition, $\sum_{\alpha\in\mathcal{I}_{L_1:L_D}}C_{\alpha} = \cO(\varepsilon^{-\sum_{i=1}{D}\gamma_i/s_i})$ and this is bounded by $\cO(\varepsilon^{-2})$ due to the assumption that
$\sum_{i=1}^{D}\gamma_i/s_i \leq 2$. Thus, the total computational cost is dominated by $\cO(\varepsilon^{-2})$.
\end{proof}

The proof of Theorem \ref{thm:main2} is similar as that of Remark \ref{thm:main1}. 
The details are as follows.
\begin{proof}
We start from inequality (\ref{eq:MSEVB}) and replace the general index set with the total degree index set $\mathcal{I}_{L}:=\{\alpha \in \bbN^{d}:\sum_{i=1}^{D}\delta_{i}\alpha_{i} \leq L, \sum_{i=1}^{D}\delta_{i} = 1\}.$ 

Let $L = \log \left(
\varepsilon^{-1} (\log \varepsilon^{-1})^{2(n_1-1)}
\right)/A_1,$ where $A_1 = \min_{i=1,...,D}\log(2)\delta_{i}^{-1}s_i$ and $n_1 = \#\{i=1,...,D:\ \log(2)\delta_{i}^{-1}s_i=A_1\}.$ Using Lemma \ref{lem:ineq1}, the bias term can be bounded as follows
\begin{eqnarray*}
\sum_{\alpha \notin \mathcal{I}_{L}}B_{\alpha} &=& \sum_{\alpha \notin \mathcal{I}_{L}}F_\alpha(\psi_{\zeta_\alpha})\\
&\leq& C\sum_{\alpha \notin \mathcal{I}_{L}}\prod_{i=1}^D 2^{-\alpha_i s_i}\\
&\leq& C\int_{\{\mathbf{x}\in\bbR_{+}^{D}:\ \sum_{i=1}^{D}\delta_{i}x_{i}> L,\  \sum_{i=1}^{D}\delta_{i}=1\}}\prod_{i=1}^{D}e^{-\log(2)x_{i}s_i}\rm{d}\mathbf{x}\\
&=& C\int_{\{\mathbf{x}\in\bbR_{+}^{D}:\ \sum_{i=1}^{D}x_{i}> L\}}e^{-\sum_{i=1}^{D}\log(2)\delta_{i}^{-1}x_{i}s_i}\rm{d}\mathbf{x}\\
&\leq& Ce^{-A_1L}L^{n_{1}-1}
\end{eqnarray*}
where $A_1$ and $n_1$ are defined above. Substituting in the $L$ and applying Lemma \ref{lem:ineq3}, the bias term is of $\cO(\varepsilon)$.

Following the similar steps as the proof for Remark \ref{thm:main1} and replacing the tensor product index set with the total degree index set, the total computational cost $C_{\cI_{L}}$ can be formulated as $$C_{\mathcal{I}_{L}} = \sum_{\alpha \in \mathcal{I}_{L}}N_{\alpha}C_{\alpha} = \cO\left(\varepsilon^{-2}\left(\sum_{\alpha\in\mathcal{I}_{L}}\sqrt{V_{\alpha}C_{\alpha}}\right)^{2} + \sum_{\alpha\in\mathcal{I}_{L}}C_{\alpha} \right).$$
Starting from the first term, since $\beta_{i}>\gamma_{i}$, we have
\begin{eqnarray*}
\sum_{\alpha\in\mathcal{I}_{L}}\sqrt{V_{\alpha}C_{\alpha}} &\leq& \sum_{\alpha\in\mathcal{I}_{L}}2^{\sum_{i=1}^{D}\alpha_{i}(\gamma_i-\beta_i)/2}\\
&\leq&\frac{1}{\prod_{i=1}^{D}(1-2^{(\gamma_{i}-\beta_{i})/2})}.
\end{eqnarray*}
Considering the second term $\sum_{\alpha\in\cI_{L}}C_{\alpha}$ and using Lemma \ref{lem:ineq2}, we have
\begin{eqnarray*}
\sum_{\alpha\in\cI_{L}}C_{\alpha} &=& \sum_{\alpha\in\cI_{L}}2^{\sum_{i=1}^{D}\alpha_{i}\gamma_{i}}\\
&\leq& C\int_{\{\mathbf{x}\in\bbR_{+}^{D}:\ \sum_{i=1}^{D}x_{i}\leq L\}}e^{\sum_{i=1}^{D}log(2)\delta_{i}^{-1}\gamma_{i}x_{i}}\rm{d}\mathbf{x}\\
&\leq& Ce^{A_2L}L^{n_2-1},
\end{eqnarray*}
where $A_2 = \max_{i=1,...,D}\log(2)\delta_{i}^{-1}\gamma_{i}$ and $n_2 = \#\{i=1,...,D:\ \log(2)\delta_{i}^{-1}\gamma_{i}=A_2\}.$

Substituting $L$ into the upper bound, and since $2s_{i} \geq \beta_{i} > \gamma_{i}$, 
we have $\gamma_i/s_i \leq 2$ which gives $\sum_{\alpha\in\cI_{L}}C_{\alpha} \leq \cO(\varepsilon^{-2})$. 
Then, the summation of the two terms is of $\cO(\varepsilon^{-2})$.
\end{proof}

\blu{Lemma \ref{lem:ineq1} and \ref{lem:ineq2} below are from Lemma 6.3 and 6.2 of \cite{haji2016multi}.}

\begin{lem}\label{lem:ineq1}
\blu{For $L \geq 1$ and $\mathbf{a}\in\bbR_{+}^{D}$, there exists a $C(\mathbf{a})>0$
such that the following inequality holds} $$\int_{\{\mathbf{x}\in\bbR_{+}^{D}:\ \sum_{i=1}^{D}x_{i}> L\}}e^{-\sum_{i=1}^{D}a_{i}x_{i}}\rm{d}\mathbf{x} \leq 
C e^{-A_1L}L^{n_{1}-1},$$ where 
$$A_1 = \min_{i=1,...,D}a_{i},\ n_{1} =\#\{i=1,...,D:\ a_i = A_1\}.$$
\end{lem}

\begin{lem}\label{lem:ineq2}
\blu{For $L \geq 1$ and $\mathbf{a}\in\bbR_{+}^{D}$, there exists a $C(\mathbf{a})>0$
such that the following inequality holds} \blu{$$\int_{\{\mathbf{x}\in\bbR_{+}^{D}:\ \sum_{i=1}^{D}x_{i}\leq L\}}e^{\sum_{i=1}^{D}a_{i}x_{i}}\rm{d}\mathbf{x} \leq 
Ce^{A_2L}L^{n_{2}-1},$$ where 
$$A_2 = \max_{i=1,...,D}a_{i},\ n_{2} = \#\{i=1,...,D:\ a_i = A_2\}.$$}
\end{lem}

{
\begin{lem}\label{lem:ineq3}
	For 
$
L = \log \left(
\epsilon^{-1} (\log \epsilon^{-1})^{2(n-1)}
\right)/A
$
\begin{align*}
  e^{-AL} L^{n-1} \leq C \epsilon
\end{align*}
where $C = (2(n-1) /A)^{n-1} $.
\end{lem}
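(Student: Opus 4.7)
The plan is to substitute the given value of $L$ directly and reduce the claim to an elementary inequality between $\log \varepsilon^{-1}$ and $\log\log \varepsilon^{-1}$. First I would compute
\[
AL \;=\; \log\!\left(\varepsilon^{-1}(\log\varepsilon^{-1})^{2(n-1)}\right) \;=\; \log\varepsilon^{-1} + 2(n-1)\log\log\varepsilon^{-1},
\]
so that $e^{-AL} = \varepsilon\,(\log\varepsilon^{-1})^{-2(n-1)}$ exactly. Consequently
\[
e^{-AL} L^{n-1} \;=\; \frac{\varepsilon\, L^{n-1}}{(\log\varepsilon^{-1})^{2(n-1)}},
\]
and the claim $e^{-AL} L^{n-1} \leq (2(n-1)/A)^{n-1}\varepsilon$ becomes equivalent to
\[
L^{n-1} \;\leq\; \left(\tfrac{2(n-1)}{A}\right)^{n-1} (\log\varepsilon^{-1})^{2(n-1)},
\]
which, after taking $(n-1)$-th roots for $n\geq 2$, reduces to the single scalar inequality
\[
L \;\leq\; \tfrac{2(n-1)}{A}\,(\log\varepsilon^{-1})^2. \qquad (\star)
\]
The case $n=1$ is trivial since then $L^{n-1}=1$ and the constant is $1$ (with the convention $0^0=1$), so $e^{-AL}L^{n-1}=\varepsilon$.

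Next I would substitute the definition of $L$ into $(\star)$: multiplying by $A$, the inequality to verify becomes
\[
\log\varepsilon^{-1} + 2(n-1)\log\log\varepsilon^{-1} \;\leq\; 2(n-1)(\log\varepsilon^{-1})^{2}.
\]
I would handle this by splitting the right-hand side into two equal pieces $(n-1)(\log\varepsilon^{-1})^2 + (n-1)(\log\varepsilon^{-1})^2$ and matching each piece against one of the two terms on the left. Under the mild assumption $\varepsilon \leq 1/e$ (so that $x:=\log\varepsilon^{-1}\geq 1$), one has $(n-1)x^{2}\geq x$, accounting for the first summand. For the second summand one needs $(n-1)x^{2}\geq 2(n-1)\log x$, i.e.\ $x^{2}\geq 2\log x$; this elementary inequality holds for all $x\geq 1$, as can be checked by noting that the function $x \mapsto x^{2}-2\log x$ has derivative $2x-2/x\geq 0$ on $[1,\infty)$ and value $1$ at $x=1$.

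I do not anticipate a genuine obstacle: the argument is a careful but routine computation, and the main care needed is ensuring that the regime of $\varepsilon$ (namely $\varepsilon\leq 1/e$, which is implicit in asymptotic statements of this kind) is stated and that the elementary inequality $x^{2}\geq 2\log x$ is invoked at the appropriate place. Combining the two pieces proves $(\star)$ and hence the lemma.
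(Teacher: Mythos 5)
Your proof is correct, and it takes a somewhat different route from the paper's own argument. The paper works entirely in log space: it writes $\log(e^{-AL}L^{n-1}) = -AL + (n-1)\log L$, expands both pieces using the definition of $L$, and concludes by observing that the term $\log\log\log\epsilon^{-1} - \log\log\epsilon^{-1}$ is non-positive. You instead cancel $e^{-AL} = \epsilon(\log\epsilon^{-1})^{-2(n-1)}$ against the constant, isolating the single scalar inequality $L \leq \tfrac{2(n-1)}{A}(\log\epsilon^{-1})^2$, which you then settle by matching $\log\epsilon^{-1}$ and $2(n-1)\log\log\epsilon^{-1}$ against the two halves of the right-hand side. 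Your version buys two things: (i) it treats the $n=1$ edge case explicitly rather than leaving it implicit, and (ii) it avoids a subtle slip in the paper's derivation, where an equality sign is written between $\log\bigl(\log\epsilon^{-1}+2(n-1)\log\log\epsilon^{-1}\bigr)$ and $\log\log\epsilon^{-1}+\log\bigl(2(n-1)\log\log\epsilon^{-1}\bigr)$ — a step that only holds as an inequality (via $a+b\le ab$ for $a,b\ge 2$) and hence quietly requires $\epsilon$ to be somewhat smaller than your stated threshold $\epsilon\le 1/e$. The price of your route is the need to take an $(n-1)$-th root and the accompanying case split on $n=1$, which the paper's log-space computation sidesteps; but on balance your version is the cleaner of the two.
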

\begin{proof}
  The argument follows by the following sequence of equalities. The final inequality, below, follows since $[\log x -x]\leq 0$ for $x= \log \log \epsilon^{-1}$.
\begin{align*}
&  \log \left(
	e^{-AL} L^{n-1} 
\right)
\\
= 
&
- AL + (n-1) \log L
\\
=
&
- \log \epsilon^{-1}
- 2(n-1) \log \log \epsilon^{-1}
+
(n-1) \log \left(
	\frac{1}{A}
\log \left(
	\epsilon^{-1}
(\log \epsilon^{-1})^{2(n-1)}
\right)
\right)\\
=
&
 \log \epsilon
- 2(n-1) \log \log \epsilon^{-1}
+
(n-1)\log \frac{1}{A}
+
(n-1) \log \log \epsilon^{-1}
\\
&+(n-1) \log \left(
	2(n-1) \log \log \epsilon^{-1}
\right)\\
=
&
\log \epsilon
+(n-1)\log (2(n-1)/A)
+
(n-1)\left[
	\log \log \log \epsilon^{-1} - \log \log \epsilon^{-1}
\right]
\\
\leq 
&
\log  (\epsilon (2(n-1)/A)^{n-1} ),
\end{align*}
as required.
\end{proof}
}

{
\begin{lem}\label{lem:Lag}
For a fixed 
$\varepsilon^2=\sum_{\alpha \in \cI}\bbE\left[\left(F^{N_\alpha}_\alpha(\psi_{\zeta_\alpha}) - F_\alpha(\psi_{\zeta_\alpha})\right)^2\right] = 
\sum_{\alpha \in \cI}\frac{V_{\alpha}}{N_{\alpha}}$ (from Lemma \ref{prop:MSEbound}),
the cost is minimised 
by choosing $N_{\alpha}$ such that $$N_{\alpha} = \varepsilon^{-2}\left(\sum_{\alpha'\in\mathcal{I}}\sqrt{V_{\alpha'}C_{\alpha'}}\right)\l\sqrt{\frac{V_{\alpha}}{C_{\alpha}}}.$$
\end{lem}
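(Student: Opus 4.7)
The plan is to solve a constrained optimization problem via Lagrange multipliers, treating the $N_\alpha$ as positive real variables (the rounding to integers is already accounted for in the additive $+1$ term used in the proofs of Theorems \ref{thm:main1} and \ref{thm:main2}). Concretely, I would minimize the cost functional
\begin{equation*}
\mathcal{C}(\{N_\alpha\}_{\alpha \in \cI}) = \sum_{\alpha \in \cI} N_\alpha C_\alpha
\end{equation*}
over $N_\alpha > 0$ subject to the variance budget constraint
\begin{equation*}
g(\{N_\alpha\}) := \sum_{\alpha \in \cI} \frac{V_\alpha}{N_\alpha} - \varepsilon^2 = 0 \, .
\end{equation*}

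First I would form the Lagrangian $\mathcal{L}(\{N_\alpha\},\lambda) = \sum_\alpha N_\alpha C_\alpha + \lambda \left(\sum_\alpha V_\alpha/N_\alpha - \varepsilon^2\right)$ and compute the first-order stationarity conditions $\partial \mathcal{L}/\partial N_\alpha = C_\alpha - \lambda V_\alpha / N_\alpha^2 = 0$, which yield $N_\alpha = \sqrt{\lambda}\,\sqrt{V_\alpha/C_\alpha}$. Substituting this expression back into the constraint gives
\begin{equation*}
\sum_{\alpha \in \cI} \frac{V_\alpha}{\sqrt{\lambda}\,\sqrt{V_\alpha/C_\alpha}}
= \frac{1}{\sqrt{\lambda}} \sum_{\alpha \in \cI} \sqrt{V_\alpha C_\alpha} = \varepsilon^2 \, ,
\end{equation*}
so that $\sqrt{\lambda} = \varepsilon^{-2} \sum_{\alpha' \in \cI} \sqrt{V_{\alpha'} C_{\alpha'}}$. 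Inserting this value of $\sqrt{\lambda}$ back into the expression for $N_\alpha$ recovers exactly the claimed formula.

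To confirm that this critical point is actually the minimizer (and not, say, a saddle or a maximum), I would observe that the cost $\sum_\alpha N_\alpha C_\alpha$ is linear hence convex in each $N_\alpha$, while the constraint surface $\{\sum_\alpha V_\alpha/N_\alpha = \varepsilon^2\}$ bounds a convex feasible set (the super-level set of a convex function since $N_\alpha \mapsto V_\alpha/N_\alpha$ is convex on $\mathbb{R}_+$). The cost also blows up as any $N_\alpha \to \infty$, ensuring the infimum is attained at an interior stationary point, of which there is only one by the calculation above. I do not anticipate a main obstacle; the only subtlety is the continuous relaxation, which is handled exactly as in standard MLMC/MIMC references \cite{giles2015multilevel, haji2016multi} by the integer ceiling adjustment already used elsewhere in Appendix \ref{app:b}.
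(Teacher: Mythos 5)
Your proposal is correct and follows essentially the same route as the paper: a Lagrange multiplier calculation minimizing $\sum_\alpha N_\alpha C_\alpha$ subject to $\sum_\alpha V_\alpha/N_\alpha = \varepsilon^2$, with the only cosmetic difference being that the paper parameterizes the multiplier as $\lambda^2$ where you use $\lambda$ (so your $\sqrt{\lambda}$ is their $\lambda$). Your closing convexity remark confirming the stationary point is a genuine minimizer is a small but welcome addition the paper omits.
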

}
{
\begin{proof}
Given fixed 
$\varepsilon^2=\sum_{\alpha \in \cI}\frac{V_{\alpha}}{N_{\alpha}}$,
the cost can be minimised as a function of $\{N_\alpha\}_{\alpha \in \cI}$ by applying the Lagrange multiplier method. For some Lagrange multiplier $\lambda$, we solve the minimisation problem 
$$\min_{N_{\alpha}}\sum_{\alpha \in \cI}N_{\alpha}C_{\alpha} 
+ \lambda^2\left(\sum_{\alpha \in \cI}\frac{V_{\alpha}}{N_{\alpha}} -\varepsilon^2\right) \, .$$ 
This gives the optimal value of $N_{\alpha} = \lambda\sqrt{V_{\alpha}/C_{\alpha}}$
for each $\alpha\in \cI$. 
Plugging the solution to $\{N_\alpha\}_{\alpha \in \cI}$ into the constraint equation 
$\varepsilon^2=\sum_{\alpha \in \cI}\frac{V_{\alpha}}{N_{\alpha}}$ gives 
$\lambda = \varepsilon^{-2}\sum_{\alpha \in \cI}\sqrt{V_{\alpha}C_{\alpha}}$ 
and therefore 
$$N_{\alpha} = \varepsilon^{-2}\left(\sum_{\alpha'\in\mathcal{I}}\sqrt{V_{\alpha'}C_{\alpha'}}\right)\l\sqrt{\frac{V_{\alpha}}{C_{\alpha}}}.$$
\end{proof}
}

\section{LGC results}
\label{app:lgc}

{
We restate and prove Proposition \ref{prop:prodspecconv}.

\begin{prop}\label{prop:prodspecconvapp}
 Let $x \sim \pi_0$, where $\pi_0$ is a Gaussian process 
 of the form \eqref{eq:GPKL} with spectral decay 
corresponding to \eqref{eq:prodspec}, 
and let $x_\alpha$ correspond to truncation on the index set 
$\cA_\alpha$ as in \eqref{eq:GPKLfin}. 
Then $x \in H_q^{\sf m}$ for all $q<\beta/2$, and for $r \in [0,q)$
there is a $C>0$ such that 
$$
\| x_\alpha - x \|_r^2 \leq C \|x\|_q^2 2^{-2(q-r)\min_i \alpha_i} \, .
$$
 \end{prop}
 
 \begin{proof}
Since $x\sim \pi_0$ is a Gaussian process, 
in order to prove $x \in H_q^{\sf m}$ for all $q<\beta/2$, 
it suffices to prove that 
$\bbE \| x \|_q^2 <\infty$.
Indeed there is a $C>0$ such that
$$
\bbE \|x\|_q^2 \leq C \sum_{k\in \bbZ^2} 
((1+k_1^2)(1+k_2^2))^{q-\frac{\beta+1}{2}} \, ,
$$
from which it is clear that $2q<\beta$ provides a sufficient condition for summability.
{To see this, define 
$x_k=\langle \phi_k, x \rangle \equiv 
\int \phi_k(z) x^*(z) dz$, and note that
$\bbE |x_k|^2 = \rho_k(\theta)$
and $\{\phi_k\}_{k \in \bbZ}$ are orthonormal}. 
In more detail,
\begin{align*}
    \mathbb{E}[\|x\|_q^2] &= \mathbb{E}\left[\|A^{q/2}x\|^2\right]\\
&{= \mathbb{E}\left \| \sum_{k\in \mathbb{Z}}a_k^{q/2} \phi_k 
\underbrace{\langle \phi_k, x \rangle}_{x_k}
\right \|^2} \\
 &={\sum_{k,k'\in\mathbb{Z}^2} a_k^{q/2} a_{k'}^{q/2}
 \underbrace{\langle \phi_k, \phi_{k'} \rangle}_{\delta_{k,k'}}
  \mathbb{E} x_kx_{k'}}  \\
&=\sum_{k\in\mathbb{Z}^2} 
 (1+k_1^2)^{q}(1+k_2^2)^{q}\mathbb{E} |x_k|^2 \\    
    &= \sum_{k\in\mathbb{Z}^2}(1+k_1^2)^{q}(1+k_2^2)^{q}\rho_k(\theta)^2\\
&\leq C \sum_{k\in\mathbb{Z}^2}(1+k_1^2)^{q}(1+k_2^2)^{q}
\frac{1}{((1+k_1^2)(1+k_2^2))^{\frac{\beta+1}{2}}}\\
    &= C \sum_{k\in\mathbb{Z}^2}((1+k_1^2)(1+k_2^2))^{q-\frac{\beta+1}{2}}
\end{align*}

Now let $P_{\cA_{\alpha}}$ denote the projection onto the index set $\cA_\alpha$. 
Observe that 
there is a $C>0$ such that 
\begin{eqnarray}\nonumber
\| A^{-q/2} - A^{-q/2}P_{\cA_{\alpha}} \|_{\cL(L^2,L^2)}^2 
&=& \sup_{\|x\|=1} \sum_{k \notin \cA_\alpha} a_k^{-q} x_k^2 \\
 \label{eq:opnorm}
 &\leq& C ( 2^{-2q\alpha_1} + 2^{-2q\alpha_2}) \, ,
\end{eqnarray}
where $\cL(L^2,L^2)$ denotes the space of linear operators from 
$L^2$ to $L^2$.

For $r\in[0,q)$, we have 
\begin{eqnarray*}
 \| x_\alpha - x \|_r^2 &=& 
\|A^{-q/2}A^{(q+r)/2}(x - P_{\cA_\alpha}x)  \|^2
 \, \\
 &\leq & \| (A^{-(q-r)/2} - A^{-(q-r)/2}P_{\cA_{\alpha}}) A^{q/2} x \|^2 \\
&\leq& C \|x\|_q^2 2^{-2(q-r)\min_i \alpha_i} \, ,
 \end{eqnarray*}
 where the first line follows from the definition, the second follows from commutativity 
 of $P_{\cA_{\alpha}}$ and $A$, 
 and the final line follows from the definition of operator norm 
 and \eqref{eq:opnorm}.
\end{proof}
}

We restate and prove Proposition \ref{prop:lgcpost}.

{\begin{prop}
	Given $x: \Omega \rightarrow \mathbb R$ is a Gaussian process, with probability measure denoted $\pi_0$, defined on compact finite dimensional space $\Omega$, that is almost surely continuous and has a finite mean and covariance. If we define $\pi$ by
\begin{eqnarray*}
{\sf(LGC)} \qquad \frac{d \pi}{d \pi_0}(x) 
&\propto& \exp \left [\sum_{j =1}^{n} x(z_j) - \int_{\Omega} \exp(x(z)) dz \right] \, , \\
{\sf(LGP)} \qquad \frac{d \pi}{d \pi_0}(x) 
&\propto& \exp \left [\sum_{j =1}^{n} x(z_j) - n\log \int_{\Omega} \exp(x(z)) dz \right]  \, .
\end{eqnarray*} 
for $n \in \mathbb N$ then $\pi(dx)$ is a well-defined probability measure,
and can be represented in terms of its density with respect to $\pi_0$:
$$
\pi(dx) = \frac{d\pi}{d\pi_0} \pi_0(dx) \, .
$$ 
\end{prop}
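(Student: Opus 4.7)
The plan is to verify the two conditions needed for the displayed formula to define a probability measure: (i) the quantity on the right-hand side is a well-defined, non-negative, measurable function of $x$ almost surely under $\pi_0$, and (ii) its $\pi_0$-integral, which serves as the normalizing constant $Z$, is finite and strictly positive. Condition (i) follows immediately from almost-sure continuity of $x$ on the compact set $\Omega$: both the point evaluations $x(z_j)$ and the integral $\int_\Omega e^{x(z)} dz$ are then well-defined and measurable in $x$ (viewed as an element of $C(\Omega)$), and the integral is strictly positive $\pi_0$-a.s., so the logarithm appearing in the LGP exponent is $\pi_0$-a.s. finite. The measurability of $\|x\|_\infty$ used below is also standard in this setting.

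For the finiteness of $Z$, I would treat the LGC and LGP cases separately. In the LGC case, simply drop the non-positive term $-\int_\Omega e^{x(z)}dz$ to obtain the pointwise upper bound $\exp(\sum_j x(z_j))$ on the density. Since $(x(z_1),\dots,x(z_n))$ is a finite-dimensional Gaussian vector under $\pi_0$ with finite mean and covariance, its moment generating function is everywhere finite, so $\mathbb{E}_{\pi_0}[\exp(\sum_j x(z_j))] < \infty$, which gives $Z_{\rm LGC}<\infty$. In the LGP case, use the elementary estimate
\[
\int_\Omega e^{x(z)}dz \;\geq\; |\Omega|\exp(-\|x\|_\infty),
\]
valid by continuity of $x$ on compact $\Omega$. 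Then the exponent is bounded above by $\sum_j x(z_j) + n\|x\|_\infty - n\log|\Omega|$. The Fernique theorem for Gaussian measures on $C(\Omega)$ gives $\mathbb{E}_{\pi_0}[\exp(c\|x\|_\infty^2)]<\infty$ for sufficiently small $c>0$, and in particular $\mathbb{E}_{\pi_0}[\exp(a\|x\|_\infty)]<\infty$ for every $a>0$; combined with the finite Gaussian MGF of $\sum_j x(z_j)$ and the Cauchy--Schwarz inequality, this yields $Z_{\rm LGP}<\infty$.

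The strict positivity $Z>0$ is handled uniformly in the two cases by a simple truncation argument. By almost-sure continuity and finiteness of $\|x\|_\infty$ under $\pi_0$, the event $A_M := \{\|x\|_\infty \leq M\}$ has $\pi_0(A_M)>0$ for some (indeed all sufficiently large) $M>0$. On $A_M$, the LGC exponent is bounded below by $-nM - |\Omega|e^M$, and the LGP exponent by $-nM - n\log(|\Omega|e^M) = -2nM - n\log|\Omega|$; in either case by a finite deterministic constant $c_M$. Therefore $Z \geq e^{c_M}\pi_0(A_M) > 0$, and we may normalize to obtain the probability measure $\pi$ with the stated density with respect to $\pi_0$.

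The main technical tool is the Fernique theorem, which supplies the exponential moment estimates needed in the LGP upper bound; everything else reduces to elementary inequalities together with a.s.\ continuity. I do not anticipate substantial obstacles: the one point requiring mild care is the justification that $\|x\|_\infty$ is $\pi_0$-measurable (immediate from separability of $C(\Omega)$ and a.s.\ continuity) and that the pointwise bounds above can be integrated under $\pi_0$, for which the finite Gaussian MGF and Fernique suffice.
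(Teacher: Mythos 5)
Your argument is correct; the one genuine departure from the paper's proof is in the LGP upper bound. You control $\int_\Omega e^{x(z)}\,dz$ from below by $|\Omega|\,e^{-\|x\|_\infty}$, thereby reducing the LGP exponent to an affine expression in $\|x\|_\infty$ and $\sum_j x(z_j)$, and close the argument with Fernique's theorem plus Cauchy--Schwarz. The paper instead writes $\int_\Omega e^{x(z)}\,dz = \mathbb{E}_u[e^{x(u)}]$ with $u$ uniform on $\Omega$, applies Jensen's inequality in the form $\bigl(\mathbb{E}_u[e^{\sum_{i=1}^n x(u_i)}]\bigr)^{-1} \leq \mathbb{E}_u[e^{-\sum_{i=1}^n x(u_i)}]$, and then invokes Tonelli/Fubini: for each fixed $u$ the exponent $\sum_j x(z_j) - \sum_i x(u_i)$ is a finite-dimensional Gaussian with uniformly bounded mean and variance, so its MGF is bounded independently of $u$. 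The paper's route is attractive because it sidesteps Fernique and any $\|\cdot\|_\infty$-estimates entirely, needing only the MGFs of finite-dimensional marginals under the stated finite-mean-and-covariance hypothesis; your route is shorter to state but reaches for heavier Gaussian-concentration machinery (which is, of course, available here). For the lower bound you use a single uniform truncation on $A_M = \{\|x\|_\infty \le M\}$ for both models, whereas the paper argues differently for LGC (a two-event truncation via a union bound giving probability at least $1/2$) and for LGP (almost-sure strict positivity of the integrand); yours is slightly more streamlined but equivalent in substance. The LGC upper bound coincides with the paper's.
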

}

We first proof the proposition for LGC.
\begin{proof}
	For $\pi$ with LGC to be well-defined, the first exponential above must be integrable. Specifically, we require that 
\[
0 < Z:= \mathbb E_{\pi_0} \left[ \exp \left\{ \sum_{j =1}^{n} x(s_j) - \int_{\Omega} \exp(x(s)) ds \right\} \right] < \infty \, .
\]
To upper-bound $Z$ notice that $ \sum_{j=1}^n x(s_j)$ is a real-valued gaussian random variable with finite mean and variance, which we denote by $\mu$ and $\sigma^2$, respectively. Also $\exp(x(s)) $ is non-negative thus
\begin{align*}
Z :=&\, \mathbb E_{\pi_0} \left[ \exp \left\{ \sum_{j =1}^{n} x(s_j) - \int_{\Omega} \exp(x(s)) ds \right\} \right]
\notag 
\\
\leq &\, \mathbb E_{\pi_0} \left[ \exp \left\{ \sum_{j =1}^{n} x(s_j) \right\} \right]
= \mathbb E [ e^{\mu + \sigma^2/2} ] < \infty \, .
\end{align*}
This gives the required upper-bound.

For the lower-bound, we note that since $x(s)$ is almost surely continuous and $\Omega$ is compact, then $\sup_{s\in \Omega} x(s)$ is almost surely finite. 
Thus $\int_\Omega \exp ( x(s) ) ds$ is almost surely finite, because $\int_\Omega \exp ( x(s) ) ds < |\Omega |\exp (\sup_{s\in \Omega} x(s))$. Thus, by Monotone convergence, there exists a value of $K_1$ such that 
\[
\mathbb P \Big( \int_\Omega \exp ( x(s) ) ds >  K_1 \Big) \leq 1/4
\] 
Similarly since $\sum_{i=1}^n x_i(s)$ is Gaussian, there exists a value of $K_2$ such that 
\[
\mathbb P \left( \sum_{j=1}^n x_j(s) \leq - K_2 \right) \leq 1 / 4
\]
Taking, $K= 2(K_1 \vee K_2) $
\begin{align*}
&\mathbb P \left( \sum_{j =1}^{n} x(s_j) - \int_{\Omega} \exp(x(s)) ds  > -K\right) \\
\geq 
&
\mathbb P \left( \sum_{j =1}^{n} x(s_j) > - K/2 ,
- \int_{[0,1]^2} \exp(x(s)) ds  > -K/2\right) 
\notag 
\\
\geq 
&
1
-
\mathbb P\left( \sum_{j =1}^{n} x(s_j) \leq - K/2 \right)
-
\mathbb P \left( K/2 \leq \int_{\Omega} \exp(x(s)) ds \right) 
\notag 
\\
\geq
&
1 - \frac{1}{4}- \frac{1}{4} = \frac{1}{2} \, .
\end{align*}
Thus Markov's inequality for corresponding value of $K$ gives
\begin{align*}
Z :=&\, \mathbb E_{\pi_0} \left[ \exp \left\{ \sum_{j =1}^{n} x(s_j) - \int_{\Omega} \exp(x(s)) ds \right\} \right]
\notag 
\geq
\frac{1}{2} e^{-K} > 0 \, ,
\end{align*}
as required.
\end{proof}

\blu{We now prove the proposition for LGP.}

\begin{proof}
	For $\pi$ with LGP to be well-defined, the second exponential above must be integrable. Specifically, we require that 
\[
0 < Z:= \mathbb E_{\pi_0} \left[ \exp \left\{ \sum_{j =1}^{n} x(s_j) - n \log \left( \int_{\Omega} \exp(x(s)) ds \right) \right\} \right] < \infty \, .
\]
\blu{First, we show $Z$ is lower bounded by $0$.} 
Notice that since the process $x(s)$ is almost surely continuous in $s$ and the domain $\Omega$ is compact. Thus almost surely it holds that 
\[
\tilde Z := \exp \left\{ \sum_{j =1}^{n} x(s_j) - n \log \left( \int_{\Omega} \exp(x(s)) ds \right) \right\} >0 
\] 
As an immediate consequence $Z=\mathbb E_{\pi_0}[ \tilde Z ] > 0$, as required.

\blu{We now show $Z$ is upper bounded.} Notice that, if we let $ u = (u_i : i=1,..,n)$ be independent uniformly distributed on $\Omega$ (wlog we assume $|\Omega|=1$) then we can bound  as follows
\begin{align}
\tilde Z
=&
\exp \left\{ \sum_{j =1}^{n} x(s_j) \right\}
\left( 
\int_{\Omega} \exp(x(s)) ds
\right)^{-n}
=
\exp \left\{ \sum_{j =1}^{n} x(s_j) \right\}
\left(
	\mathbb E_{u} \left[
	e^{\sum_{i=1}^n x(u_i) }
\right]
\right)^{-1}\notag 
\\
\leq 
&
\exp \left\{ \sum_{j =1}^{n} x(s_j) \right\}
\mathbb E_{u}  \left[
	e^{- \sum_{i=1}^n x(u_i) }
\right]
=
\mathbb E_{u}  \left[ 
\exp \left\{ \sum_{j =1}^{n} x(s_j) - \sum_{i=1}^n x(u_i)  \right\}
\right] \label{Ztilde}
\end{align}
The inequality above applies Jensen's Inequality. 
Thus we see we can bound $Z$ by bounding the expectation of $\exp \{ \sum_{j =1}^{n} x(s_j) - \sum_{i=1}^n x(u_i)  \}$ with respect to $\pi_0$ and $u$. 
By Fubini's Theorem \cite{williams1991probability}, 
we can hold $u$ fixed and take the expectation with respect to $\pi_0$. 
Notice that under $\pi_0$, $x$ is a Gaussian process with bounded mean and covariance. We let $\mu$ and $\sigma$ bound the mean and covariance of $x$. So, conditional on $u$, it holds that
\begin{align*}
\mathbb E_{\pi_0} \left[ \exp \left\{ \sum_{j =1}^{n} x(s_j) - \sum_{i=1}^n x(u_i)  \right\}\right]
\leq e^{n\mu + n\sigma^2/2} \, .
\end{align*}
Since the above upper-bound is independent of $u$, we have that 
\begin{align}\label{Ubound}
\mathbb E_{\pi_0} \left[ \mathbb E_{u} \left[  \exp \left\{ \sum_{j =1}^{n} x(s_j) - \sum_{i=1}^n x(u_i)  \right\}\right]\right]
\leq e^{n\mu + n\sigma^2/2} \, .
\end{align}
{Note that Fubini for positive random variables allows interchanging integrals 
without a priori finite guarantees \cite{williams1991probability}.}
Combining inequalities \eqref{Ztilde} and \eqref{Ubound} we have
\begin{align*}
Z = \mathbb E_{\pi_0} [ \tilde Z ] \leq  \mathbb E_{\pi_0} \left[ \mathbb E_{u} \left[  \exp \left\{ \sum_{j =1}^{n} x(s_j) - \sum_{i=1}^n x(u_i)  \right\}\right]\right]
\leq e^{n\mu + n\sigma^2/2} < \infty \, ,
\end{align*}
which gives the required upper-bound on $Z$.
\end{proof}

{We now restate and prove Proposition \ref{prop:biplgc}.
\begin{prop}\label{prop:biplgcapp}
For both LGP and LGC, 
there is a $C>0$ such that for $x \sim \pi_0$
and $x_\alpha = P_{\cA_\alpha}x${, where  $P_{\cA_\alpha}$ denotes the projection onto the index set $\cA_\alpha$ defined in \eqref{eq:GPKLfin},}
the following rate estimate holds for all $q<(\beta-1)/2$ 
$$
\bbE | L_\alpha(x_\alpha) - L(x) |^2 \leq C 2^{-2\min\{q,1\} \min\{\alpha_1,\alpha_2\}} \, .
$$
\end{prop}

\begin{proof}
The result is proven for the more difficult case of LGC. {The LGP case follows similarly.}
Define 
\begin{eqnarray}\label{eq:phi1}
\Phi(x) &:=& \sum_{j =1}^{n} x(z_j) - \int_{\Omega} \exp(x(z)) dz\\ \label{eq:phialpha}
\Phi_\alpha(x_\alpha) &:=&
\sum_{j =1}^{n} \hat{x}_\alpha(z_j) - 
Q(\exp(x_{\alpha})) \, ,
\end{eqnarray}
where we recall the definition of $\hat{x}_\alpha$ above \eqref{eq:lgclikefin}.
Now $L(x) = \exp(\Phi(x))$ and $L_\alpha(x_\alpha) = \exp(\Phi_\alpha(x_\alpha))$. 
Note that $\pi_0 = N(0,\cC)$ and that $\cC$ has the kernel representation
$$
{\cC(z,z') = \sum_{k_1} \rho_{k_1}^2\phi_{k_1}(z_1)\phi_{k_1}(z_1')
\sum_{k_2} \rho_{k_2}^2\phi_{k_2}(z_2)\phi_{k_2}(z_2') =: 
\cC_1(z_1,z'_1)\cC_2(z_2,z'_2) \, .}
$$
This means that the dependence between $z_1$ and $z_2$
is only statistical, via $\xi_k$ in \eqref{eq:GPKL}, and 
therefore a given realization $x \sim \pi_0$ admits factorization 
{$$x(z)=\sum_{k_1} \rho_{k_1}\phi_{k_1}(z_1) x_{2,k_1}(z_2) \, ,$$ }
where, for each $k_1$, the i.i.d. random variables
{$$
x_{2,k_1}(z_2) = \sum_{k_2} \xi_{k_1,k_2} \rho_{k_2}\phi_{k_2}(z_2) \, 
$$}
have the properties of $x_2' \sim N(0,\cC_2)$, i.e. 
$x_{2,k_1} \in H_{\beta/2}$ and 
the 1-dimensional Sobolev Embedding Theorem
(see e.g. \cite{stuart2010inverse}) provides
$\|x_{2,k_1}\|_{L^\infty({\Omega})} \leq C \|x_{2,k_1}\|_r$, for 
$r \in [1/2,\beta/2]$.
Furthermore, letting 
{$\hat{x}_1(z_1):=\sum_{k_1} \rho_{k_1} \|x_{2,k_1}\|_r \phi_{k_1}(z_1)$,}
it is clear that $\|\hat{x}_1\|_r = \|x\|_r < \infty$ for $r<\beta/2$. 
Hence, applying Sobolev Embedding Theorem again on $\hat{x}_1(z_1)$,
we have $\|\hat{x}_1\|_{L^\infty({\Omega})} \leq C \|\hat{x}_1\|_r$, 
for $r \in [1/2,\beta/2]$.
Now, since $\|\phi_{k_1}\|_{L^\infty({\Omega})}=1$ for all $k_1$
and {$\rho_{k_1} \|x_{2,k_1}\|_r\geq 0$,
\begin{eqnarray}
\|\hat{x}_1\|_{L^\infty({\Omega})} &=& \sup_{z_1} \sum_{k_1} \rho_{k_1} \|x_{2,k_1}\|_r |\phi_{k_1}(z_1)| \\
&\geq& C \sup_{z_1} \sum_{k_1} \rho_{k_1} \|x_{2,k_1}\|_{L^\infty({\Omega})} |\phi_{k_1}(z_1)| \\
&\geq& C \sup_{z_1,z_2} \left | \sum_{k_1} \sum_{k_2} 
\rho_{k_1}\phi_{k_1}(z_1)\xi_{k_1,k_2} \rho_{k_2}\phi_{k_2}(z_2)\right| \\
&=& C \|x\|_{L^\infty({\Omega})} \, .
\end{eqnarray}}
Sobolev Embedding Theorem is used on 
$x_{2,k_1}$ in the second line, and definitions are used in the third and fourth lines.
In conclusion, 
$\|x\|_{L^\infty({\Omega})} \leq C \|x\|_r$, for $r \geq 1/2$.
We have that 

\begin{equation}\label{eq:sobemb}
\sum_{j =1}^{n} x(z_j) \leq 
n \sup_{z\in \Omega} x(z) \leq C \| x \|_r\, .    
\end{equation}
Therefore 
\begin{equation}\label{eq:phibnd1}
\Phi(x), \Phi_\alpha(x_\alpha) \leq \|x\|_r \, .
\end{equation}
Furthermore, observe that this implies that for all $\epsilon>0$
\begin{equation}\label{eq:phibnd2}
\Phi(x), \Phi_\alpha(x_\alpha) \leq \epsilon \|x\|_r^2 + \epsilon^{-1} \, .
\end{equation}
To see this consider the cases $\| x \|_r > \epsilon^{-1}$ and $\| x \|_r \leq \epsilon^{-1}$
separately.

By the mean value theorem, 
\begin{align}
\mathbb{E} [| L_\alpha(x_\alpha) - L(x) |^2] &= \int_{H_{\beta/2}^{\sf m}} \left(\exp(\Phi(x)) - \exp(\Phi_{\alpha}(x_\alpha)\right)^2d\pi_0\\
\label{eq:likbnd}
&\leq \int_{H_{\beta/2}^{\sf m}} \exp\left(\max\{\Phi(x),\Phi_\alpha(x_\alpha)\}\right)\left|\Phi(x)-\Phi_{\alpha}(x_\alpha)\right|^2d\pi_0 \, .
\end{align}
Note that for the exponential term in the integral
\begin{align}
    \exp(\max\{\Phi(x),\Phi_\alpha(x_\alpha)\}) &\leq \exp(\Phi(x)) + \exp(\Phi_\alpha(x_\alpha))\\
    \label{eq:expbnd}
    &\leq C(\epsilon)\exp(\epsilon\|x\|_r^2)
\end{align}
and for the squared term
\begin{align}
    |\Phi(x)-\Phi_{\alpha}(x_\alpha)|^2 &= \left|\sum_{j =1}^{n} x(z_j) - \int_{\Omega} \exp(x(z)) dz - \sum_{j =1}^{n} \hat{x}_\alpha(z_j) + Q(\exp(x_{\alpha}))\right|^2\\
    \label{eq:sumbnd}
    &\leq 2\Bigg|\sum_{j =1}^{n} x(z_j) - \sum_{j =1}^{n} \hat{x}_\alpha(z_j)\Bigg|^2 + 2\Bigg|\int_{\Omega} \exp(x(z)) dz - Q(\exp(x_{\alpha})) dz\Bigg|^2
\end{align}

The bound \eqref{eq:expbnd} is clear following \eqref{eq:phibnd2}. Now consider the bound of $|\Phi(x)-\Phi_{\alpha}(x_\alpha)|^2$. For the first term of \eqref{eq:sumbnd}, let $\hat{x}_\alpha$ correspond to piecewise linear interpolation for simplicity. As in \eqref{eq:sobemb}
we have 
$$\Bigg|\sum_{j =1}^{n} ( x(z_j) - \hat{x}_\alpha(z_j) )\Bigg| 
\leq C \|x - \hat{x}_\alpha\|_r.$$
Given standard piecewise linear approximation
estimates which lead to Proposition \ref{prop:standardpde},
and weaker versions such as \cite{ern2004theory}
$$
\|x_\alpha - \hat{x}_\alpha\| \leq 2^{-\min\{\alpha_1,\alpha_2\}} \|\nabla x_\alpha \| 
\leq C 2^{-\min\{\alpha_1,\alpha_2\}} \| x_\alpha \|_1 \, ,
$$
it is natural to assume the following generalization, for $p>0$ and $r+p \leq 2$,
\begin{equation}
\|x_\alpha - \hat{x}_\alpha\|_r \leq 2^{-p\min\{\alpha_1,\alpha_2\}} \| x_\alpha \|_{r+p} \, .
\end{equation}
For $p=(\beta-1)/2$,
and $r=1/2$, 
the bound is
\begin{equation}\label{eq:interpbnd}
\|x - \hat{x}_\alpha\|_r \leq \| x - x_\alpha \|_r + \|x_\alpha - \hat{x}_\alpha \|_r 
\leq \| x - x_\alpha \|_r + 
2^{-\frac{(\beta-1)}{2}\min\{\alpha_1,\alpha_2\}} \| x_\alpha \|_{\beta/2}\, .
\end{equation}
Recall that, by \ref{prop:prodspecconv}, 
$x\sim \pi_0$
implies that $x \in H^{\sf m}_{\beta/2}$ a.s. and hence $x_\alpha \in H^{\sf m}_{\beta/2}$ a.s.

Now consider the second term of \eqref{eq:sumbnd}. For the sake of concreteness, 
we use the trapezoidal quadrature rule so that 
$Q(\exp(x_\alpha))=2^{-(\alpha_1+\alpha_2)} 
\sum_{h \in \prod_{i=1}^2\{0,2^{-\alpha_i}, \dots, 
1\}} w_h \exp(x_{\alpha}(h))$ 
(where $w_h=(1/2)^{I}$ and $I=\#\{i; h_i\in \{0,1\}\}$, 
i.e. the boundary terms are down-weighted, by $1/2$ on edges and $1/4$ at corners). Now
\begin{eqnarray}\nonumber
\int_{\Omega} \exp(x(z)) dz - Q(\exp(x_\alpha)) &=&
\int_{\Omega} (\exp(x(z)) - \exp(x_\alpha(z))) dz \\ \label{eq:quadsplit}
&+& 
\int_{\Omega} \exp(x_\alpha(z)) dz- Q(\exp(x_\alpha)) \, .
\end{eqnarray}
For the first term, we have
\begin{eqnarray}\nonumber
\int_{\Omega} (\exp(x(z)) - \exp(x_\alpha(z))) dz &\leq & 
\| \exp(\max\{x(z),{x}_\alpha(z)\}) \|  \|x - {x}_\alpha \|  \\
\label{eq:quadfirst}
&\leq& C \exp(\|x\|_r) \| x - {x}_\alpha \| \, ,
\end{eqnarray}
where the first line follows from the mean value theorem
and Cauchy Schwartz inequality,
while in the second line, the first factor follows from the inequality 
$\|x\|_{L^\infty({\Omega})} \leq \|x\|_r$, 
and the fact that $|\Omega|<\infty$.
Note that we are restricted to the $\|\cdot \|_r$
estimate as a result of the pointwise observations, 
as in \eqref{eq:phibnd1}, but $\|x\| \leq \|x\|_r$ so 
\eqref{eq:quadfirst} is suitable.
For the second term of \eqref{eq:quadsplit}, 
since the trapezoidal rule for $D=2$ follows from iterating 
the $D=1$ rules, Theorem 1.8 of \cite{cruz2002sharp},
along with similar manipulations as above, implies
\begin{equation}\label{eq:quad2nd}
\int_{\Omega} \exp(x_\alpha(z)) dz- Q(\exp(x_\alpha)) 
\leq C 2^{-\min\{\alpha_1,\alpha_2\}} \exp(\|x\|_r) \| x_\alpha \|_1 \, .
\end{equation}
Following from Proposition \ref{prop:prodspecconv},
{we require $\beta \geq 2$ so that $x \in H^{\sf m}_1$ a.s.}
and the constant in the second term \eqref{eq:quad2nd} is controlled. 
Combining \eqref{eq:quadfirst} and \eqref{eq:quad2nd} in \eqref{eq:quadsplit},
we have 
\begin{equation}\label{eq:integralbnd}
\int_{\Omega} \exp(x(z)) dz - Q(\exp(x_\alpha)) \leq
C \exp(\|x\|_r) \| x - {x}_\alpha \| + C 2^{-\min\{\alpha_1,\alpha_2\}} \exp(\|x\|_r) \| x_\alpha \|_1 \, .
\end{equation}
Now let $r=1/2+\delta$ for $\delta>0$ arbitrarily small.
Then 
for $q \in (0,(\beta-1)/2)$, 
plugging \eqref{eq:interpbnd} and \eqref{eq:integralbnd} into \eqref{eq:sumbnd} 
and using 
the same argument leading to \eqref{eq:phibnd2}
and then Proposition \ref{prop:prodspecconv}, 
we have 
\begin{equation}\label{eq:phidiff}
|\Phi(x)-\Phi_{\alpha}(x_\alpha)|^2\leq C(\epsilon) \exp(2\epsilon \|x\|^2_{\beta/2})2^{-2\min\{q,1\} \min\{\alpha_1,\alpha_2\}} \, .
\end{equation}

We note that our interest here is in rough priors with $q\leq 1$.
In case $q>1$, one would employ higher order interpolation and 
quadrature such that these errors do not limit the rate of convergence. 

Finally,
\begin{align*}
    \mathbb{E} [| L_\alpha(x_\alpha) - L(x) |^2]
    &\leq C(\epsilon)\int_{H_{\beta/2}^{\sf m}} \exp(3\epsilon \|x\|^2_{\beta/2})d\pi_02^{-2\min\{q,1\} \min\{\alpha_1,\alpha_2\}}\\
    &\leq C2^{-2\min\{q,1\} \min\{\alpha_1,\alpha_2\}}
\end{align*}
The first inequality is by substituting \eqref{eq:expbnd} and \eqref{eq:phidiff} in \eqref{eq:likbnd} and applying 
$\|x\|^2_{r} \leq \|x\|^2_{\beta/2}$. 
We note that the first inequality holds for all $\epsilon>0$. 
Fernique Theorem (e.g. Theorem 6.9 of \cite{stuart2010inverse})
guarantees that $\pi_0(\exp(3\epsilon \|x\|_{\beta/2}^2))<\infty$
for some $\epsilon>0$, and allows us to conclude with the second line.

\end{proof}

}

\section{Additional Numerical Results}
\label{app:fig}

\subsection{1D Toy Example}
\label{app:1dtoy}

We consider a 1D Toy Example first, whose likelihood is analytically tractable. This example is taken from \cite{rmlsmc}. Note that the multi-index methods are the same as multilevel methods in 1D. \blu{Considering the PDE \eqref{eq:elliptic}-\eqref{eq:boundary} with $D = 1$,} let $\Omega = [0,1]$, $a=1$, and the forcing term be $\mathsf{f} = x$, where $x$ is a random input with a uniform prior such that $x \sim U[-1,1]$. 
This differential equation can be solved analytically as $u(x) = -0.5x(z^2-z).$ 
Assume the observation operator as \eqref{eq:obop} and the observation taking the form as \eqref{eq:ob}. 
The pointwise observations are well-defined in 1D with $x \in L^2(\Omega)$. We take the observations at ten points in the interval (0,1) with a step size 1/10. Let $\Xi = 0.2$. Observations are generated by {$y = -0.5x^{*}(z^2-z) + \nu$, where $y = [y_1,...,y_{10}]$, $z = [z_1,...,z_{10}]$, $x^* = 0.2581$ drawn from $U[-1,1]$ and $\nu \sim N(0,0.2^2)$.}

\begin{figure}[H]
    \centering
    \includegraphics[width=.6\linewidth]{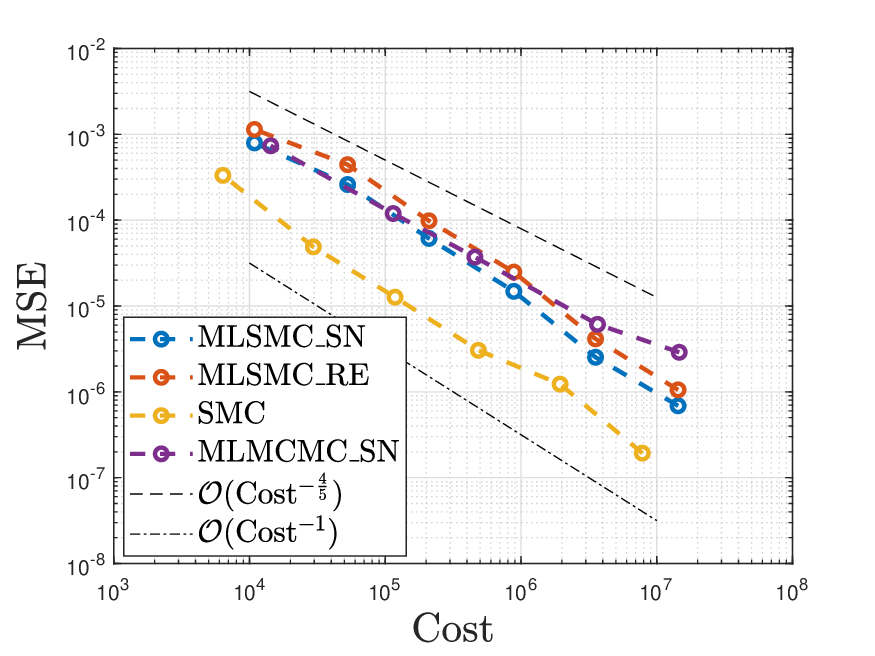}
    \caption{1D toy example, SMC, MLMCMC and MISMC rate of convergence by MSE vs Cost. Rates of regression: (1) MLSMC\_SN: $-1.011$  (2) MLSMC\_RE: $-1.005$  (3) SMC: $-0.753$ (4) MLMCMC\_SN: $-1.005$.}
    \label{fig:1DToyRate}
\end{figure}

For this example, the quantity of interest used is $x^2$. By applying the FEM and discretising the differential equation with the step size {$h_l = 2^{-l-1}$}, we have $s = 2$, $\beta = 4$. This is shown in Figure \ref{fig:1DToyMrMean} and \ref{fig:1DToyMrVar}. The value of $\gamma$ is 1 because we use a linear nodal basis function for FEM and tridiagonal solver. \blu{The algorithm is applied with Metropolis-Hastings method and a fixed tempering schedule for all $\alpha$, where $J = 3$.}
The MSE shown in Figure \ref{fig:1DToyRate} is calculated with 100 realisations, where the reference solution can be worked out as in \cite{rmlsmc}. 
The total computational cost is of $\cO(\sum_{l=0}^{L}N_lC_l)$. 

For comparison, single-level SMC, MLMCMC and MLSMC with the self-normalised increment estimator are applied in this example. 
It is difficult to observe the approximate rates from the plot directly, so we fit the rates and demonstrate those in the caption. 
The rate of convergence of single-level SMC is close to -4/5. 
The rate of convergence of MLMCMC with the self-normalised increment estimator, 
MLSMC with the self-normalised increment estimator and 
our MLSMC with ratio estimator are all approximately -1, 
which is the canonical complexity and better in terms of rate of convergence than the single-level methods as expected. 
The difference of performance between MLMCMC and MLSMC 
with either of the two estimators is only up to a constant. 
{MLMCMC has a smaller constant here, presumably as a consequence of 
the simplicity of the problem and the tuning of MLSMC.}
Our MLSMC with the ratio estimator appears to have a slightly larger constant,
while the theoretical results remain its advantage.

\begin{figure}[H]
    \centering
    \subfloat[Convergence of $B_l$]
    {\includegraphics[width=.45\linewidth]{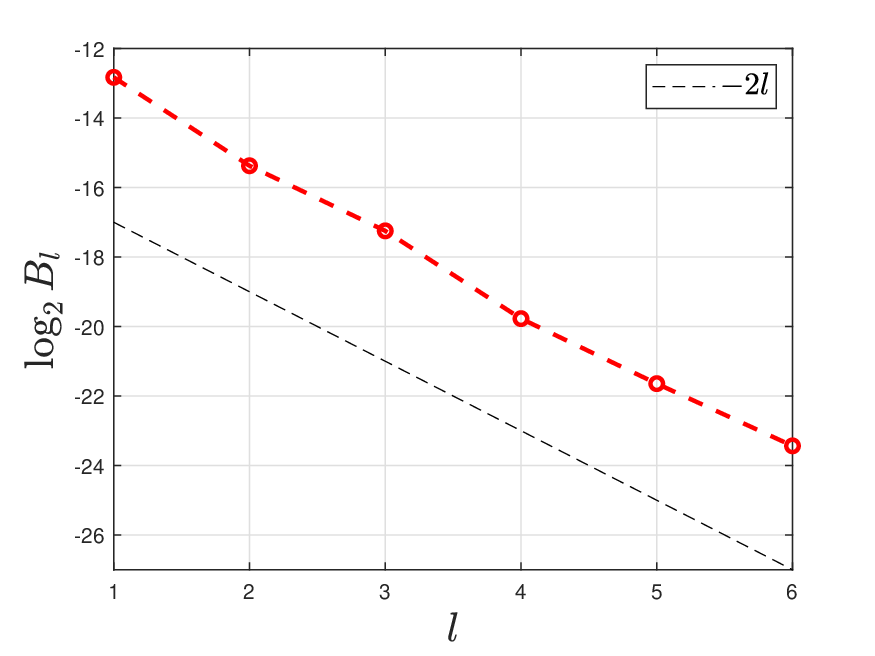}
    \label{fig:1DToyMrMean}}
    \subfloat[Convergence of $V_l$]
    {\includegraphics[width=.45\linewidth]{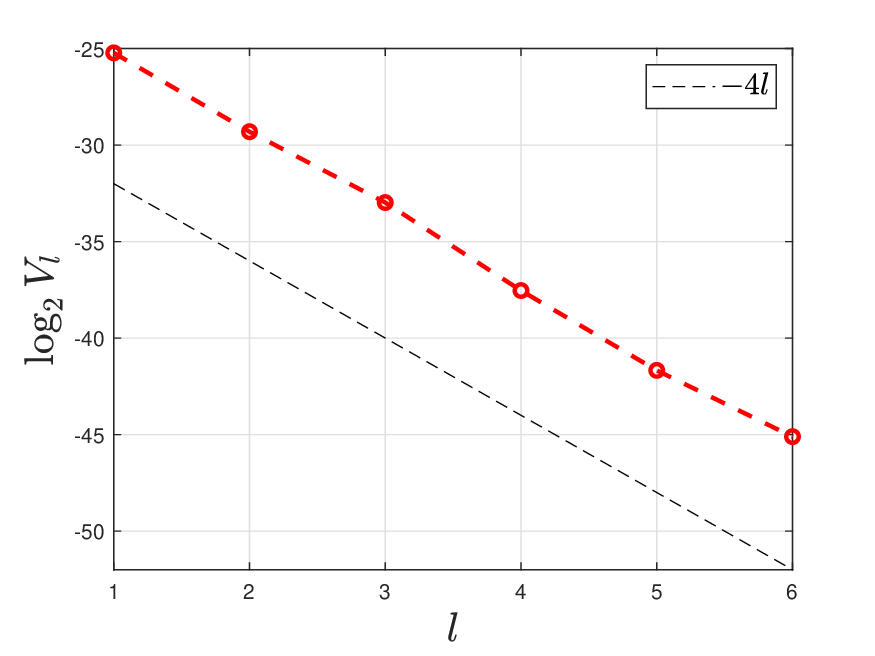}
    \label{fig:1DToyMrVar}}
    \caption{1D toy example convergence rates. 
    $B_l$ is computed with 100 realisations and 
    shown in panel \ref{fig:1DToyMrMean} 
     along with a line corresponding to $s = 2$. 
    $V_l$ is computed with 100 realisations and 
    shown in panel \ref{fig:1DToyMrMean} 
    along with a line corresponding to $\beta = 4$. }
    \label{fig:1DToyMr}
\end{figure}

\subsection{2D Elliptic PDE with random diffusion coefficient}
\begin{figure}[H]
    \centering
    \includegraphics[width=.95\linewidth]{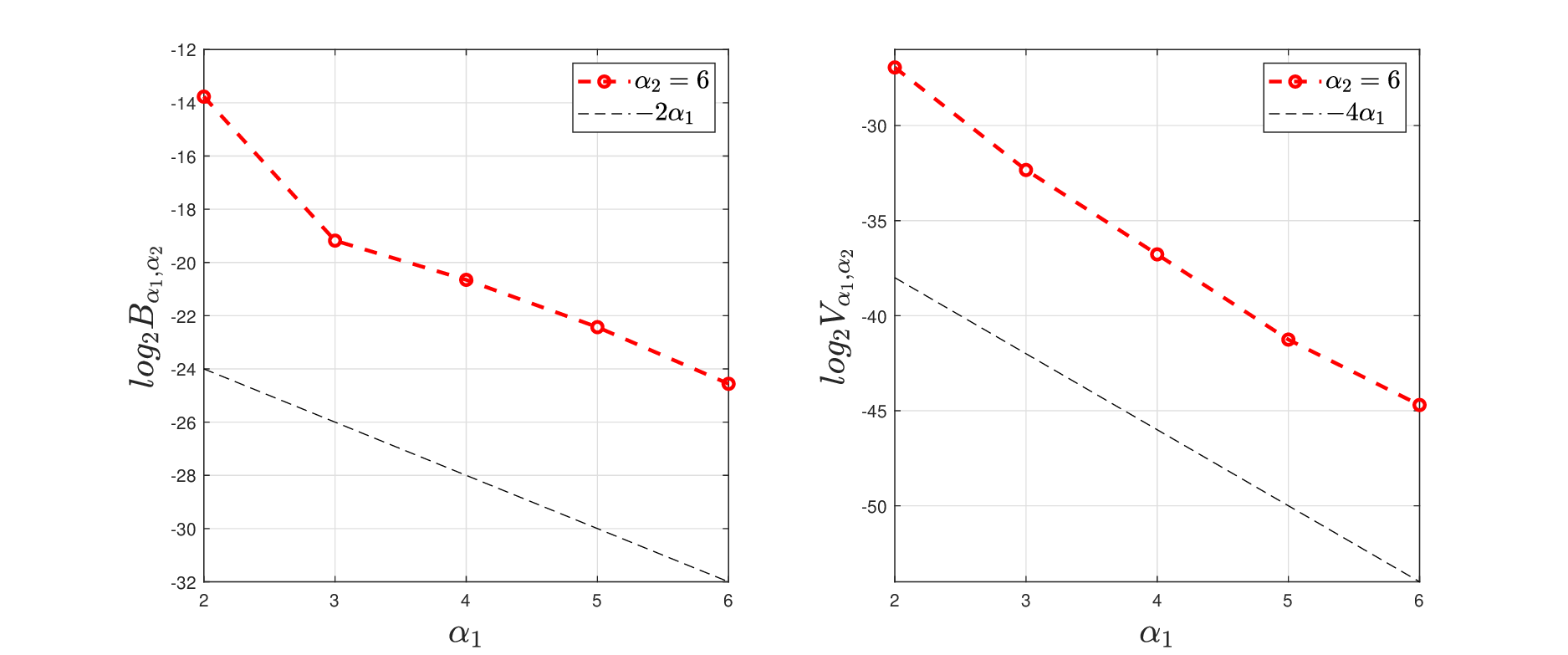}
    \caption{2D Elliptic PDE with random diffusion coefficient. 
    Verification of MISMC rates as in 
    Assumption \ref{ass:rate} for $\alpha_1$ given $\alpha_2 = 7$,
    computed with 20 realisations and 1000 samples for each realisation. 
    Left: $s_1$. Right: $\beta_1$. The same result holds for an $\alpha_1=7$ (not shown).}
    \label{fig:2DnlinMISMC1}
\end{figure}


\begin{figure}[H]
    \centering
    \includegraphics[width=.95\linewidth]{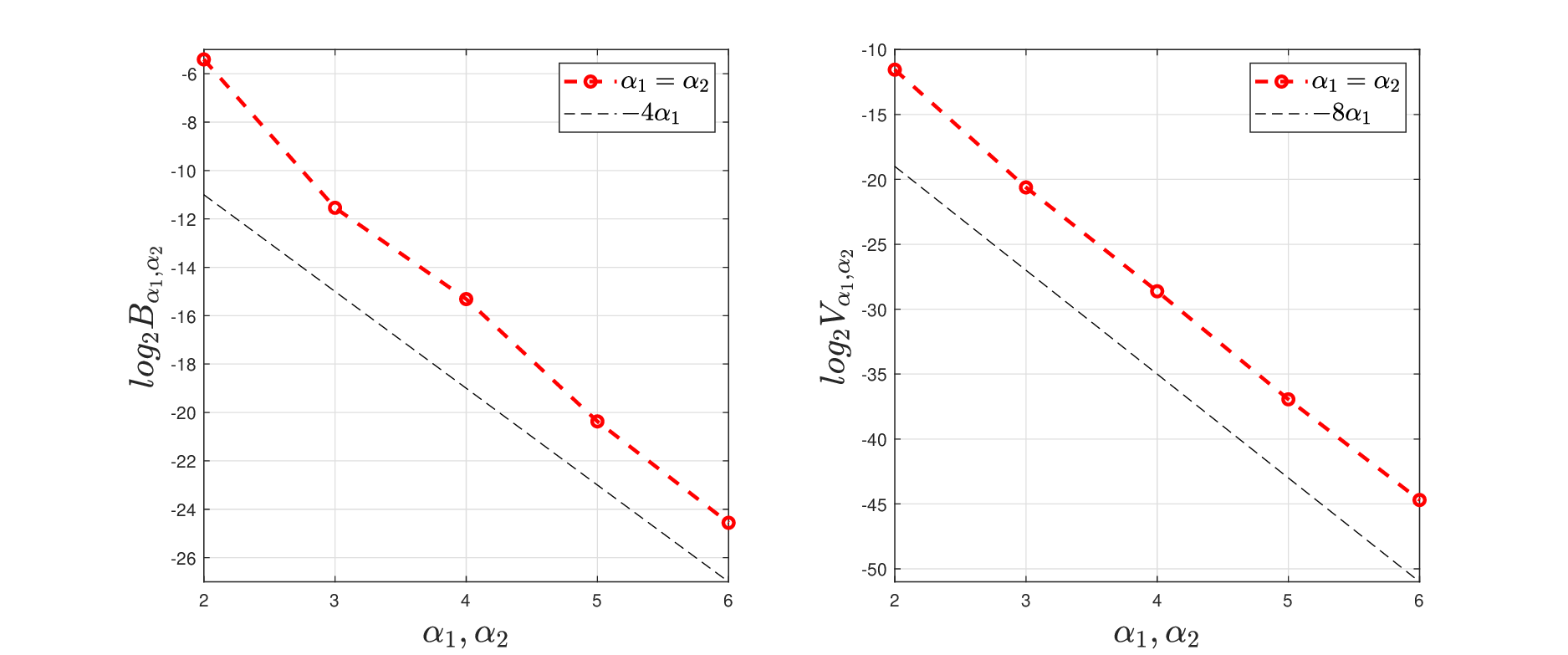}
    \caption{2D Elliptic PDE with random diffusion coefficient. 
    Verification of MISMC rates as in Assumption \ref{ass:rate} for $\alpha_1=\alpha_2$,
    computed with 20 realisations and 1000 samples for each realisation. 
    Left: $s_1+s_2$. Right: $\beta_1+\beta_2$.}
    \label{fig:2DnlinMISMC12}
\end{figure}

\begin{figure}[H]
    \centering
    \includegraphics[width=.95\linewidth]{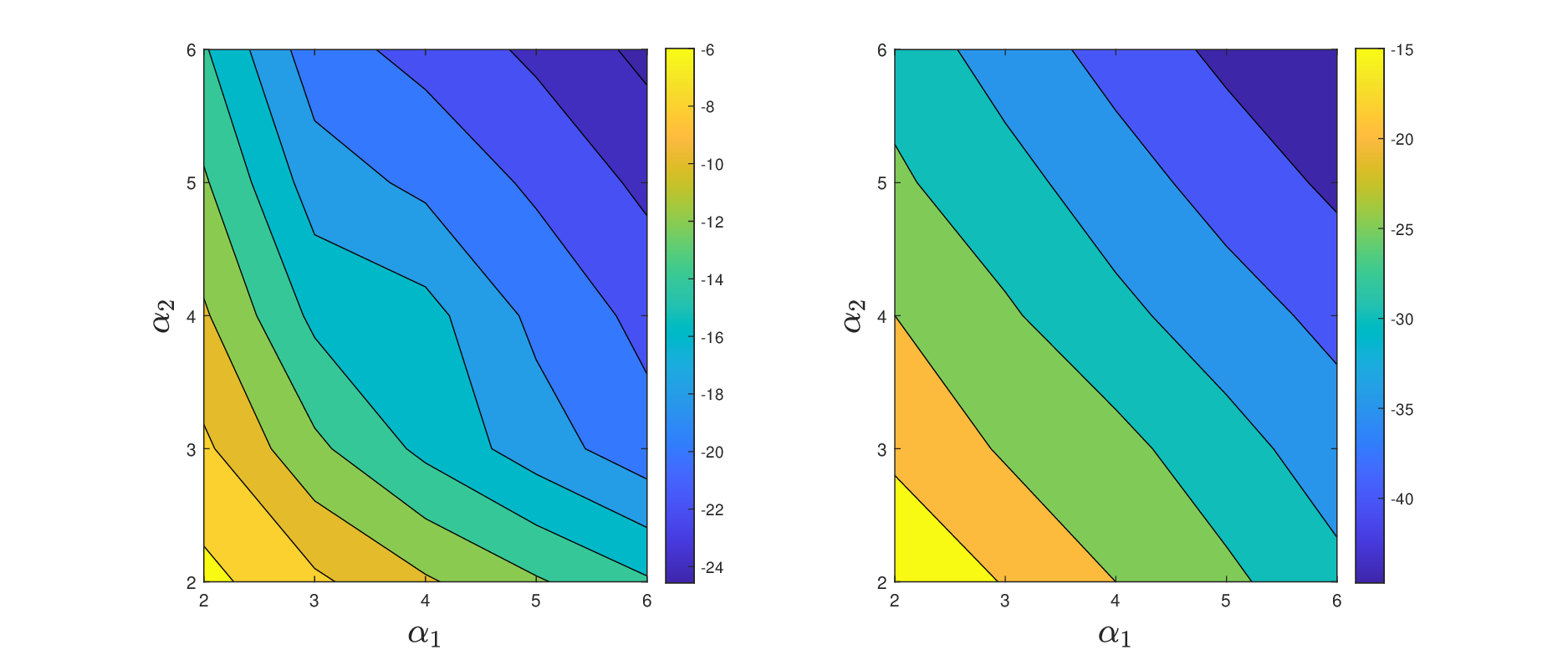}
    \caption{2D Elliptic PDE with random diffusion coefficient. 
    Verification of MISMC rates as in Assumption \ref{ass:rate},
    computed with 20 realisations and 1000 samples for each realisation. 
	Left: $s$. Right: $\beta$.}
    \label{fig:2DnlinContour}
\end{figure}

\begin{figure}[H]
    \centering
    \includegraphics[width=.95\linewidth]{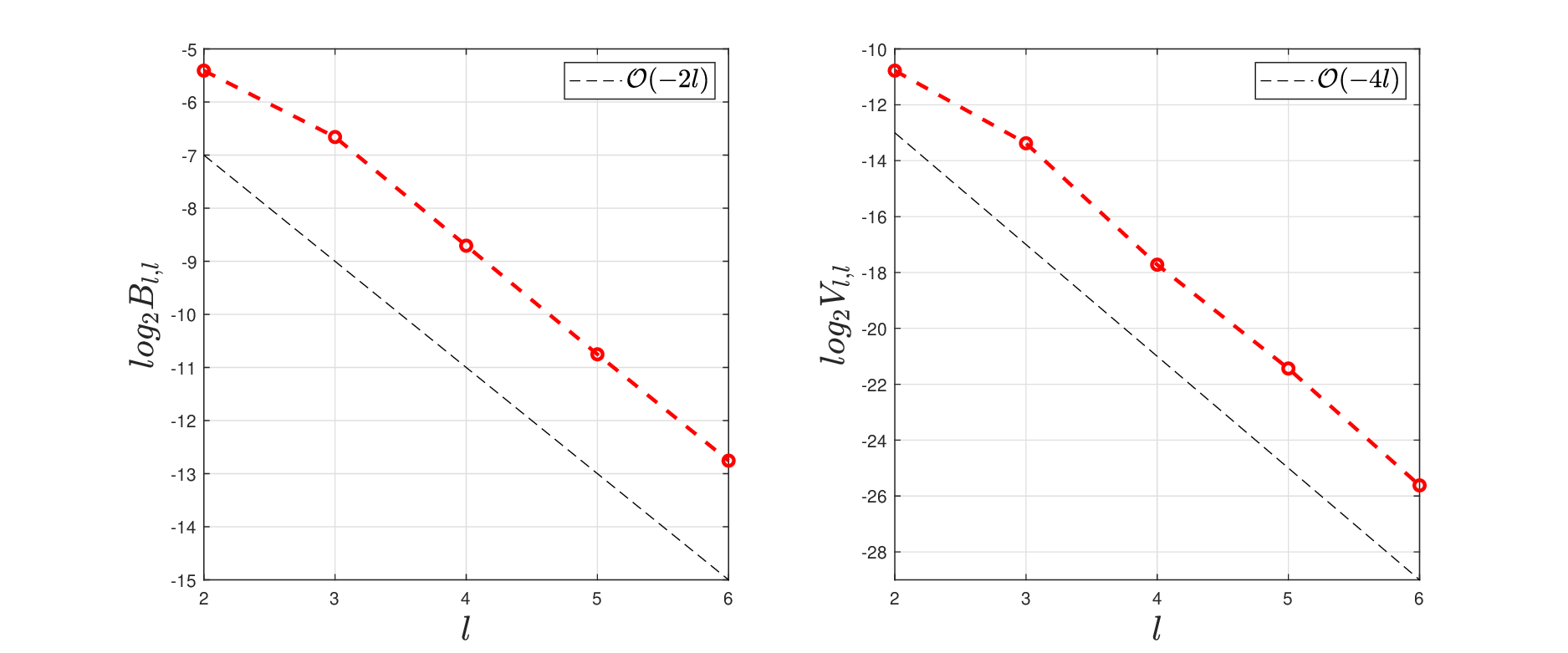}
    \caption{2D Elliptic PDE with random diffusion coefficient. 
    Verification of increment rates associated to MLSMC. 
    Computed with 20 realisations and 2000 samples for each realisation. Left: $s$. Right: $\beta$.}
    \label{fig:2DnlinMLSMC}
\end{figure}


\subsection{LGC}
\begin{figure}[H]
    \centering
    \includegraphics[width=.95\linewidth]{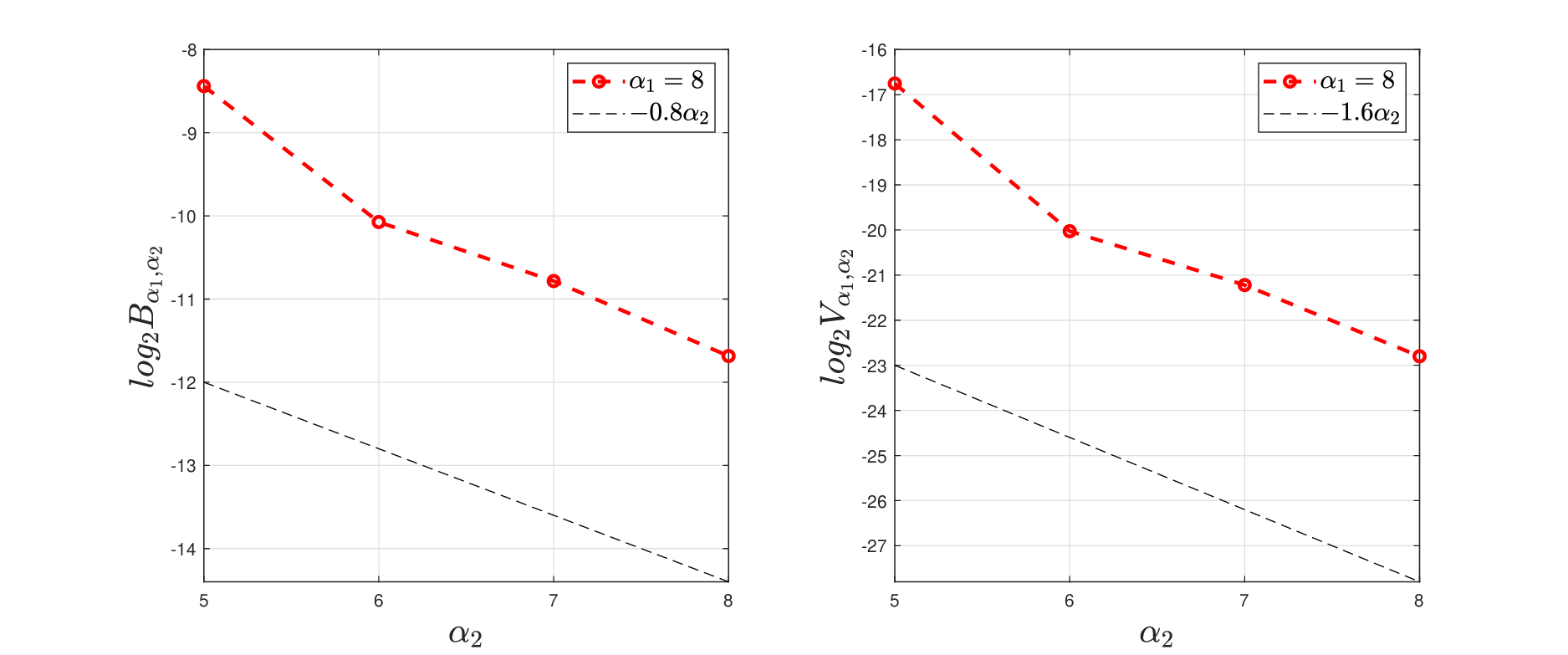}
    \caption{LGC model. 
    Verification of mixed rates associated to Assumption \ref{ass:rate} for MISMC, over
    $\alpha_2$ given $\alpha_1 = 8$, 
    computed with 20 realisations and 1000 samples for each realisation. 
    Left: $s_2$. Right: $\beta_2$. 
    The same result holds over $\alpha_1$ for $\alpha_2=8$ (not shown).}
    \label{fig:LGCMISMC2}
\end{figure}

\begin{figure}[H]
    \centering
    \includegraphics[width=.95\linewidth]{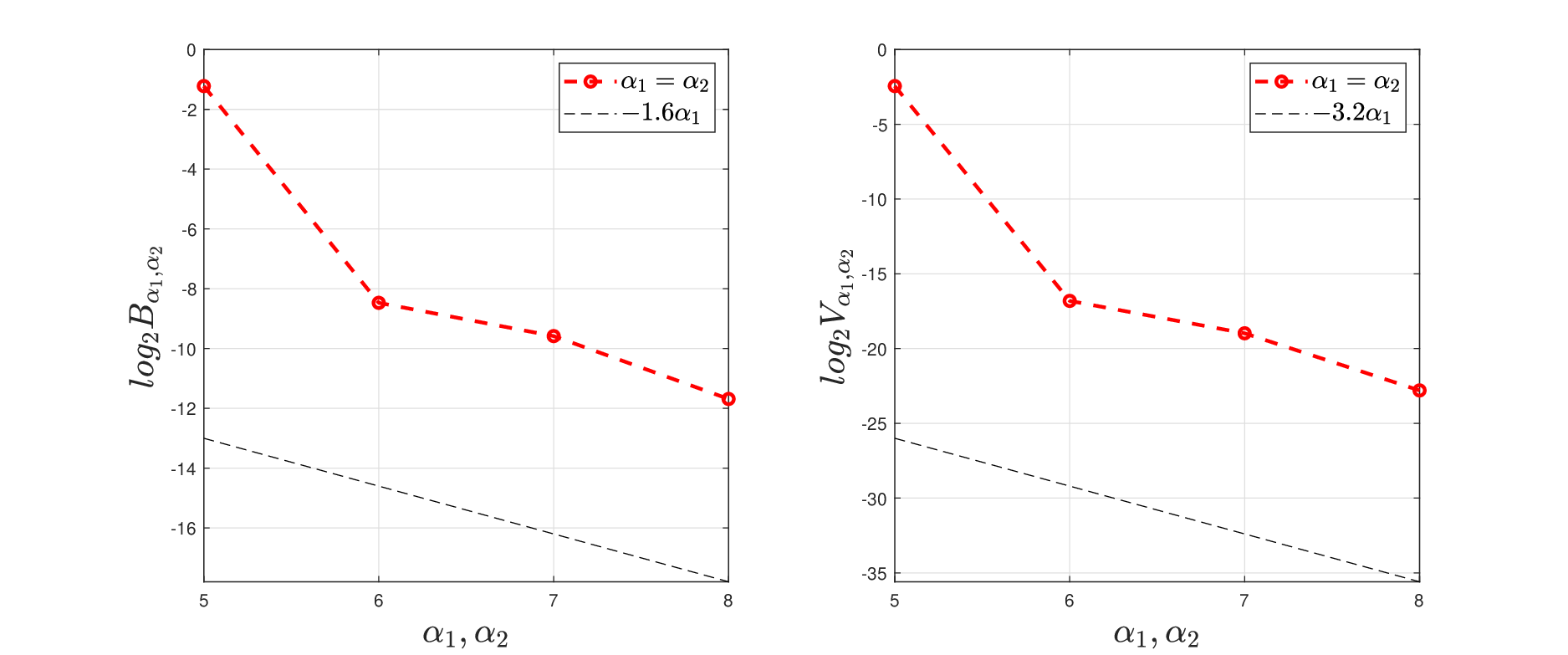}
    \caption{LGC model. 
    Verification of mixed rates associated to Assumption \ref{ass:rate} for MISMC, over
    $\alpha_2 = \alpha_1$, 
    computed with 20 realisations and 1000 samples for each realisation. 
    Left: $2s$. Right: $2\beta$.}
    \label{fig:LGCMISMC12}
\end{figure}

\begin{figure}[H]
    \centering
    \includegraphics[width=.95\linewidth]{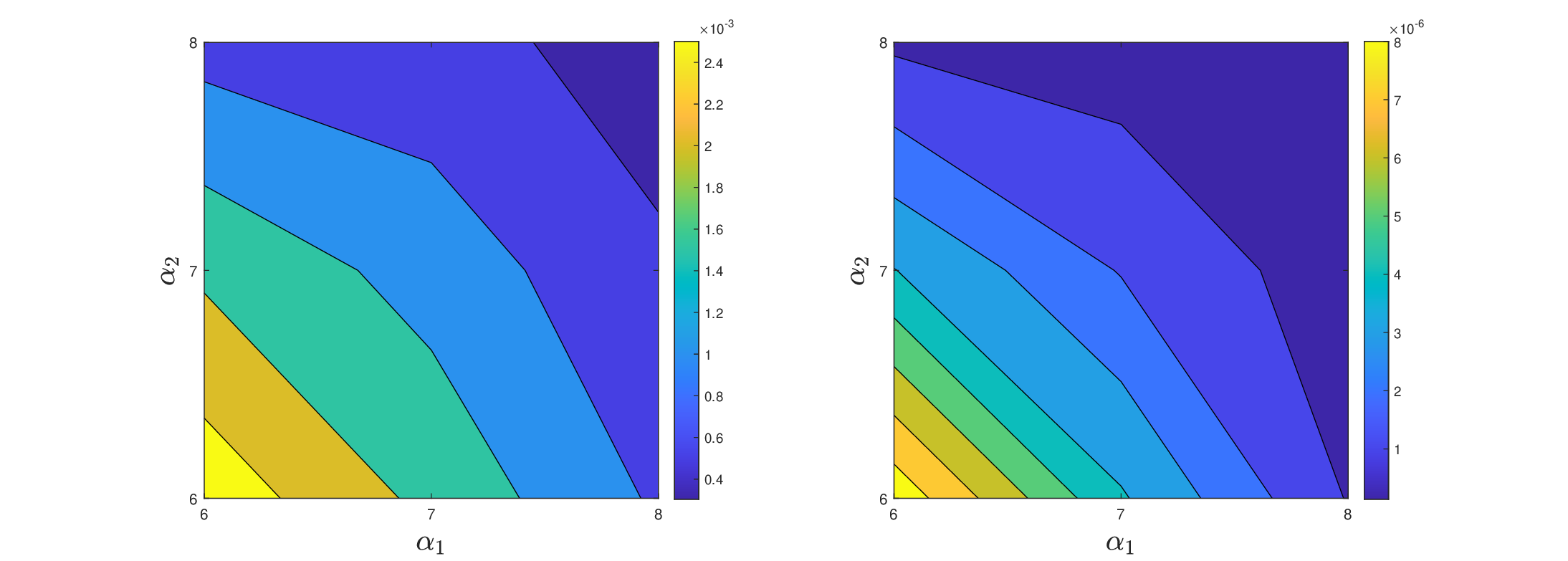}
    \caption{LGC model. 
    Verification of mixed rates associated to Assumption \ref{ass:rate} for MISMC, 
    over $\alpha_2$ and $\alpha_1$, 
    computed with 20 realisations and 1000 samples for each realisation. 
    Left: $2s$. Right: $2\beta$.}
    \label{fig:LGCContour}
\end{figure}

\begin{figure}[H]
    \centering
    \includegraphics[width=.95\linewidth]{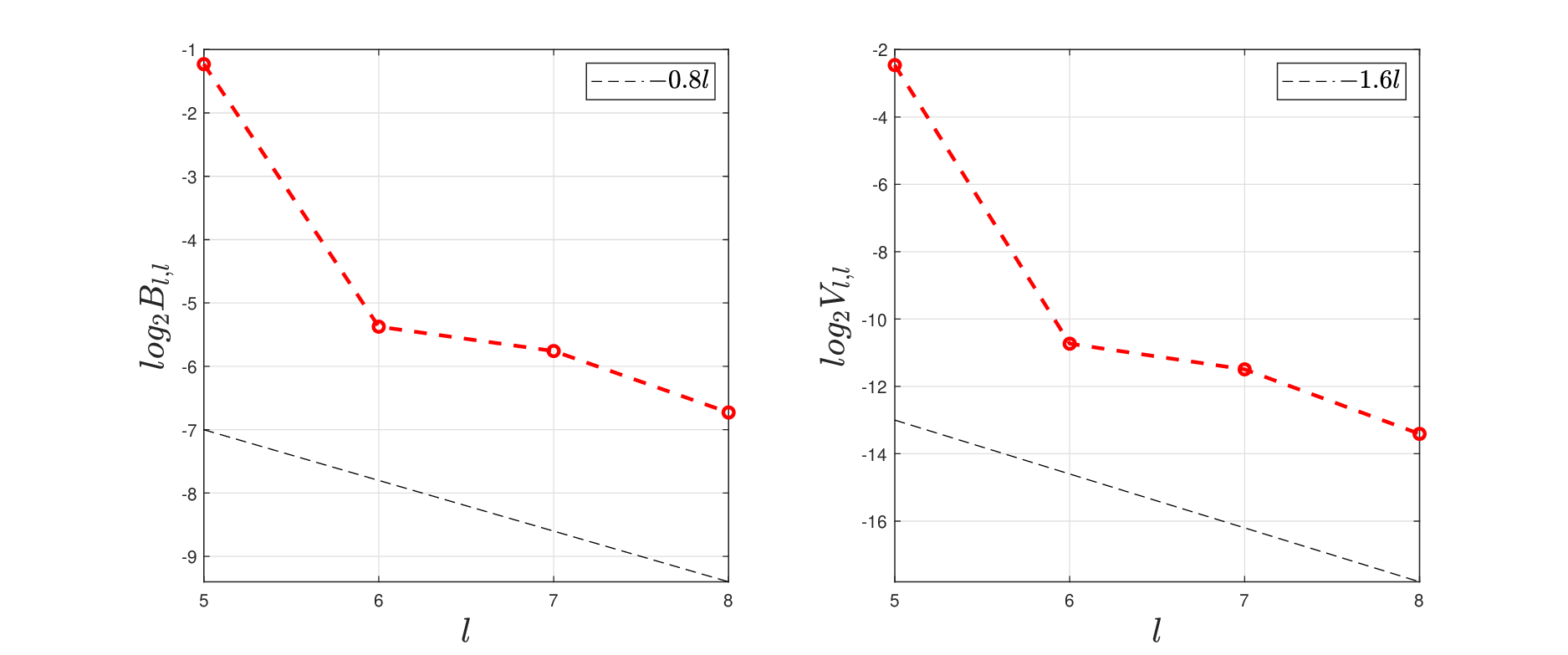}
    \caption{LGC model. Verification of increment rates for MLSMC, 
    computed with 20 realisations and 1000 samples for each realisation. 
    Left: $s$. Right: $\beta$.}
    \label{fig:LGCMLSMC}
\end{figure}

\subsection{LGP}
\begin{figure}[H]
    \centering
    \includegraphics[width=.95\linewidth]{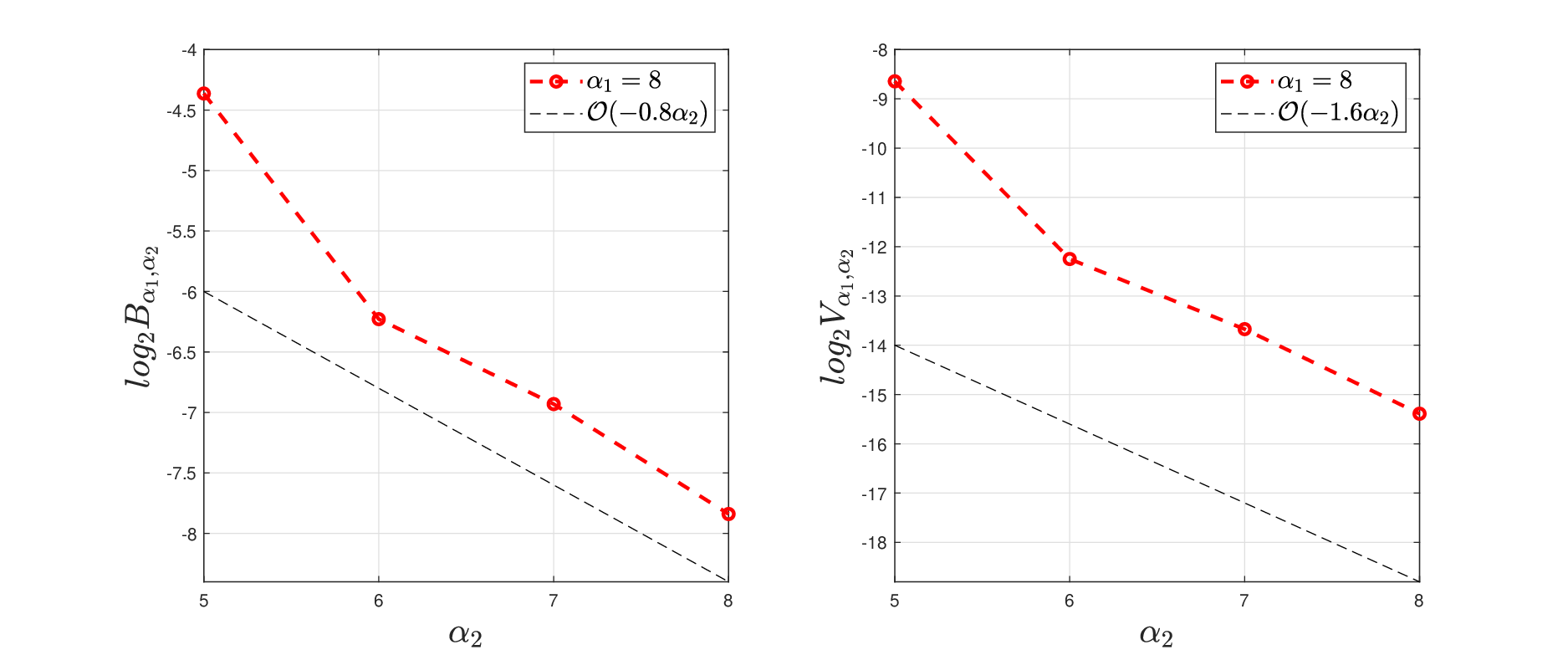}
    \caption{LGP model. 
    Verification of mixed rates associated to Assumption \ref{ass:rate} for MISMC, over
    $\alpha_2$ given $\alpha_1 = 8$, 
    computed with 20 realisations and 1000 samples for each realisation. 
    Left: $s_2$. Right: $\beta_2$. 
    The same result holds over $\alpha_1$ for $\alpha_2=8$ (not shown).}
    \label{fig:LGPMISMC2}
\end{figure}

\begin{figure}[H]
    \centering
    \includegraphics[width=.95\linewidth]{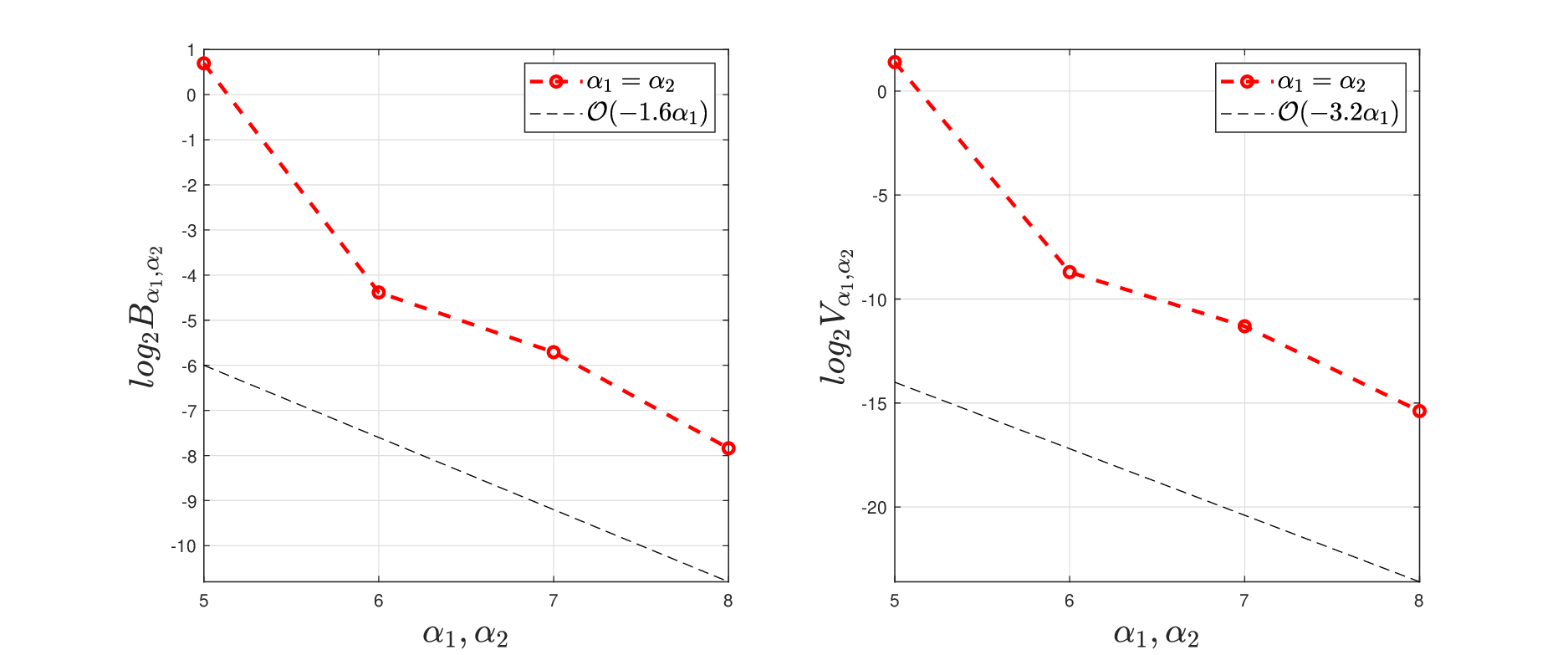}
    \caption{LGP model. 
    Verification of mixed rates associated to Assumption \ref{ass:rate} for MISMC, over
    $\alpha_2 = \alpha_1$,
    computed with 20 realisations and 1000 samples for each realisation. 
    Left: $2s$. Right: $2\beta$.}
    \label{fig:LGPMISMC12}
\end{figure}

\begin{figure}[H]
    \centering
    \includegraphics[width=.95\linewidth]{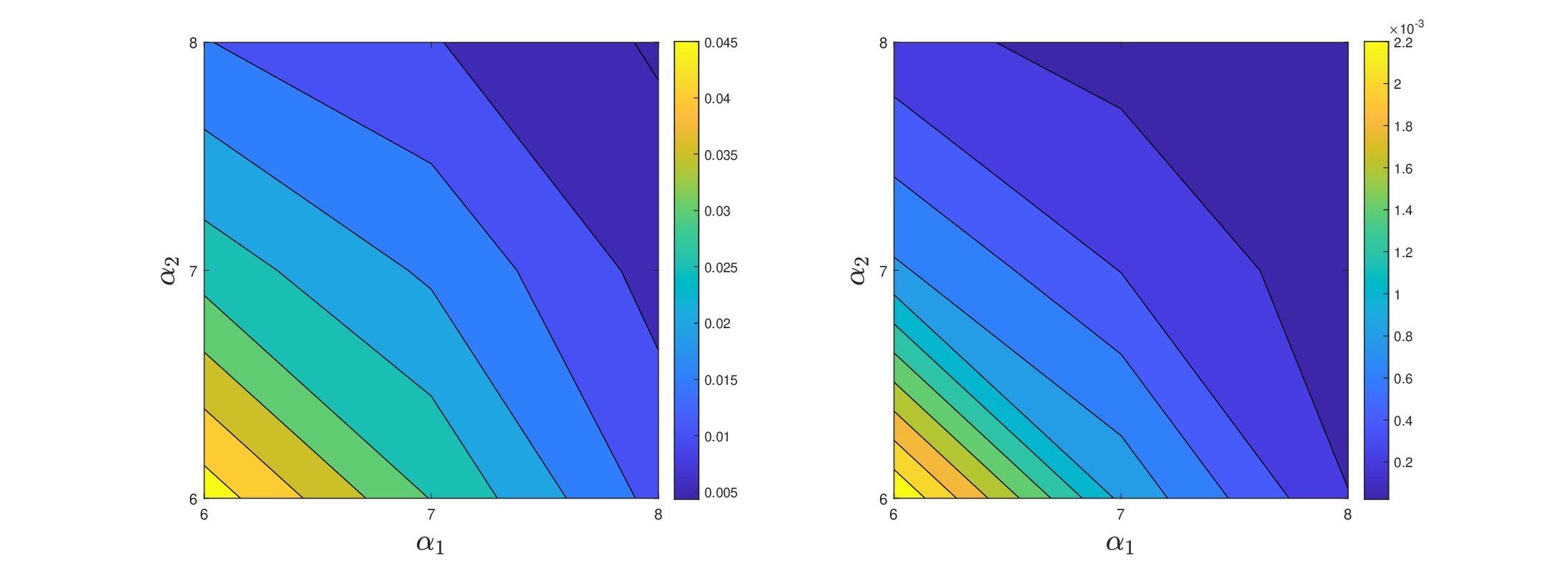}
    \caption{LGP model. 
    Verification of mixed rates associated to Assumption \ref{ass:rate} for MISMC, 
    over $\alpha_2$ and $\alpha_1$, 
    computed with 20 realisations and 1000 samples for each realisation. 
    Left: $2s$. Right: $2\beta$.}
    \label{fig:LGPContour}
\end{figure}

\begin{figure}[H]
    \centering
    \includegraphics[width=.95\linewidth]{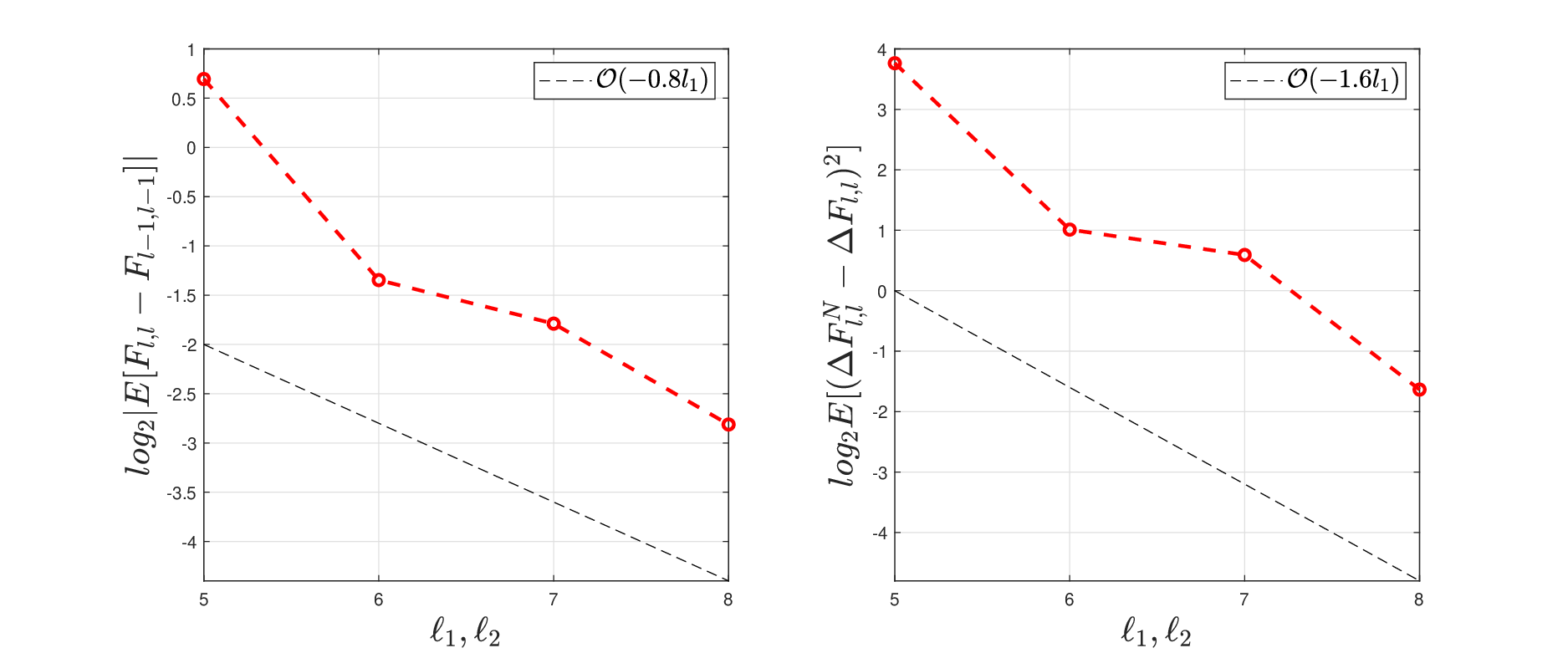}
    \caption{LGP model. Verification of increment rates for MLSMC, 
    computed with 20 realisations and 1000 samples for each realisation. 
    Left: $s$. Right: $\beta$.}
    \label{fig:LGPMLSMC}
\end{figure}

\let\itshape\upshape
\bibliography{refs}

\begin{thebibliography}{10}

\bibitem{agapiou2017importance}
S~Agapiou, Omiros Papaspiliopoulos, D~Sanz-Alonso, AM~Stuart, et~al.
\newblock Importance sampling: Intrinsic dimension and computational cost.
\newblock {\em Statistical Science}, 32(3):405--431, 2017.

\bibitem{ballesio2020wasserstein}
Marco Ballesio, Ajay Jasra, Erik von Schwerin, and Raul Tempone.
\newblock A {W}asserstein coupled particle filter for multilevel estimation.
\newblock {\em arXiv preprint arXiv:2004.03981}, 2020.

\bibitem{bengtsson2008curse}
Thomas Bengtsson, Peter Bickel, Bo~Li, et~al.
\newblock Curse-of-dimensionality revisited: Collapse of the particle filter in
  very large scale systems.
\newblock In {\em Probability and statistics: Essays in honor of David A.
  Freedman}, pages 316--334. Institute of Mathematical Statistics, 2008.

\bibitem{beskos2021score}
Alexandros Beskos, Dan Crisan, Ajay Jasra, Nikolas Kantas, and Hamza Ruzayqat.
\newblock Score-based parameter estimation for a class of continuous-time state
  space models.
\newblock {\em SIAM Journal on Scientific Computing}, 43(4):A2555--A2580, 2021.

\bibitem{beskos2018multilevel}
Alexandros Beskos, Ajay Jasra, Kody Law, Youssef Marzouk, and Yan Zhou.
\newblock Multilevel sequential {Monte C}arlo with dimension-independent
  likelihood-informed proposals.
\newblock {\em SIAM/ASA Journal on Uncertainty Quantification}, 6(2):762--786,
  2018.

\bibitem{ourmlsmc}
Alexandros Beskos, Ajay Jasra, Kody J.~H. Law, Raul Tempone, and Yan Zhou.
\newblock Multilevel sequential {Monte C}arlo samplers.
\newblock {\em Stochastic Processes and their Applications}, 127(5):1417--1440,
  2017.

\bibitem{boyce2017elementary}
William~E Boyce, Richard~C DiPrima, and Douglas~B Meade.
\newblock {\em Elementary differential equations}.
\newblock John Wiley \& Sons, 2017.

\bibitem{braess2007finite}
Dietrich Braess.
\newblock {\em Finite elements: Theory, fast solvers, and applications in solid
  mechanics}.
\newblock Cambridge University Press, 2007.

\bibitem{brenner2007mathematical}
Susanne Brenner and Ridgway Scott.
\newblock {\em The mathematical theory of finite element methods}, volume~15.
\newblock Springer Science \& Business Media, 2007.

\bibitem{bungartz2004sparse}
Hans-Joachim Bungartz and Michael Griebel.
\newblock Sparse grids.
\newblock {\em Acta numerica}, 13:147--269, 2004.

\bibitem{vihola}
Neil~K Chada, Jordan Franks, Ajay Jasra, Kody~J Law, and Matti Vihola.
\newblock Unbiased inference for discretely observed hidden markov model
  diffusions.
\newblock {\em SIAM/ASA Journal on Uncertainty Quantification}, 9(2):763--787,
  2021.

\bibitem{chatterjee2018sample}
Sourav Chatterjee, Persi Diaconis, et~al.
\newblock The sample size required in importance sampling.
\newblock {\em The Annals of Applied Probability}, 28(2):1099--1135, 2018.

\bibitem{chernov2021multilevel}
Alexey Chernov, H{\aa}kon Hoel, Kody~JH Law, Fabio Nobile, and Raul Tempone.
\newblock Multilevel ensemble {K}alman filtering for spatio-temporal processes.
\newblock {\em Numerische Mathematik}, 147(1):71--125, 2021.

\bibitem{chopin2002sequential}
Nicolas Chopin.
\newblock A sequential particle filter method for static models.
\newblock {\em Biometrika}, 89(3):539--552, 2002.

\bibitem{chopin2020introduction}
Nicolas Chopin, Omiros Papaspiliopoulos, et~al.
\newblock {\em An introduction to sequential {Monte C}arlo}, volume~4.
\newblock Springer, 2020.

\bibitem{ciarlet2002finite}
Philippe~G Ciarlet.
\newblock {\em The finite element method for elliptic problems}.
\newblock SIAM, 2002.

\bibitem{pcn}
Simon~L Cotter, Gareth~O Roberts, Andrew~M Stuart, and David White.
\newblock {MCMC} methods for functions: modifying old algorithms to make them
  faster.
\newblock {\em Statistical Science}, pages 424--446, 2013.

\bibitem{cruz2002sharp}
David Cruz-Uribe and CJ~Neugebauer.
\newblock Sharp error bounds for the trapezoidal rule and simpson's rule.
\newblock {\em J. Inequal. Pure Appl. Math}, 3(4):1--22, 2002.

\bibitem{ourmismc2}
T.~Cui, Ajay Jasra, and Kody J.~H. Law.
\newblock Multi-index sequential {Monte C}arlo methods.
\newblock {\em Preprint}.

\bibitem{del2004feynman}
Pierre Del~Moral.
\newblock Feynman-{K}ac formulae.
\newblock In {\em Feynman-Kac Formulae}, pages 47--93. Springer, 2004.

\bibitem{del2006sequential}
Pierre Del~Moral, Arnaud Doucet, and Ajay Jasra.
\newblock Sequential {Monte C}arlo samplers.
\newblock {\em Journal of the Royal Statistical Society: Series B (Statistical
  Methodology)}, 68(3):411--436, 2006.

\bibitem{dodwell2015hierarchical}
Tim~J Dodwell, Christian Ketelsen, Robert Scheichl, and Aretha~L Teckentrup.
\newblock A hierarchical multilevel {Markov chain Monte Carlo} algorithm with
  applications to uncertainty quantification in subsurface flow.
\newblock {\em SIAM/ASA Journal on Uncertainty Quantification},
  3(1):1075--1108, 2015.

\bibitem{ers07}
Howard Elman, Alison Ramage, and David Silvester.
\newblock Algorithm {866}: {IFISS}, a {M}atlab toolbox for modelling
  incompressible flow.
\newblock {\em ACM Trans. Math. Softw.}, 33:2--14, 2007.

\bibitem{elman2014finite}
Howard~C Elman, David~J Silvester, and Andrew~J Wathen.
\newblock {\em Finite elements and fast iterative solvers: with applications in
  incompressible fluid dynamics}.
\newblock Numerical Mathematics and Scie, 2014.

\bibitem{ern2004theory}
Alexandre Ern and Jean-Luc Guermond.
\newblock {\em Theory and practice of finite elements}, volume 159.
\newblock Springer, 2004.

\bibitem{geyer1992practical}
Charles~J Geyer.
\newblock Practical {Markov chain Monte C}arlo.
\newblock {\em Statistical science}, pages 473--483, 1992.

\bibitem{giles2015multilevel}
Michael~B Giles.
\newblock Multilevel {Monte C}arlo methods.
\newblock {\em Acta Numerica}, 24:259, 2015.

\bibitem{gregory2016multilevel}
Alastair Gregory, Colin~J Cotter, and Sebastian Reich.
\newblock Multilevel ensemble transform particle filtering.
\newblock {\em SIAM Journal on Scientific Computing}, 38(3):A1317--A1338, 2016.

\bibitem{haji2016multi}
Abdul-Lateef Haji-Ali, Fabio Nobile, and Ra{\'u}l Tempone.
\newblock Multi-index {Monte C}arlo: when sparsity meets sampling.
\newblock {\em Numerische Mathematik}, 132(4):767--806, 2016.

\bibitem{hoelmlenkf3}
Ra{\'u}l~Tempone H{\aa}kon~Hoel, Gaukhar~Shaimerdenova.
\newblock Multilevel ensemble {Kalman Filtering based on a sample average of
  independent EnKF estimators}.
\newblock {\em Foundations of Data Science}, 2(4):351--390, 2020.

\bibitem{heng_bishop_2020}
Jeremy Heng, Adrian~N Bishop, George Deligiannidis, and Arnaud Doucet.
\newblock Controlled sequential {Monte C}arlo.
\newblock {\em The Annals of Statistics}, 48(5):2904--2929, 2020.

\bibitem{heng_jacob_2019}
Jeremy Heng and Pierre~E Jacob.
\newblock Unbiased {Hamiltonian Monte C}arlo with couplings.
\newblock {\em Biometrika}, 106(2):287--302, 2019.

\bibitem{heng2021unbiased}
Jeremy Heng, Ajay Jasra, Kody~JH Law, and Alexander Tarakanov.
\newblock On unbiased estimation for discretized models.
\newblock {\em arXiv preprint arXiv:2102.12230}, 2021.

\bibitem{hoang2013complexity}
Viet~Ha Hoang, Christoph Schwab, and Andrew~M Stuart.
\newblock Complexity analysis of accelerated {MCMC} methods for {B}ayesian
  inversion.
\newblock {\em Inverse Problems}, 29(8):085010, 2013.

\bibitem{hoel2016multilevel}
H{\aa}kon Hoel, Kody~JH Law, and Raul Tempone.
\newblock Multilevel ensemble {K}alman filtering.
\newblock {\em SIAM Journal on Numerical Analysis}, 54(3):1813--1839, 2016.

\bibitem{jarzynski1997nonequilibrium}
Christopher Jarzynski.
\newblock Nonequilibrium equality for free energy differences.
\newblock {\em Physical Review Letters}, 78(14):2690, 1997.

\bibitem{ourac}
Ajay Jasra, Kengo Kamatani, Kody J.~H. Law, and Yan Zhou.
\newblock Bayesian static parameter estimation for partially observed
  diffusions via multilevel {Monte C}arlo.
\newblock {\em SIAM Journal on Scientific Computing}, 40(2):A887--A902, 2018.

\bibitem{ourmimc}
Ajay Jasra, Kengo Kamatani, Kody J.~H. Law, and Yan Zhou.
\newblock A multi-index {Markov chain Monte C}arlo method.
\newblock {\em International Journal for Uncertainty Quantification}, 8(1),
  2018.

\bibitem{jasra2017multilevel}
Ajay Jasra, Kengo Kamatani, Kody~JH Law, and Yan Zhou.
\newblock Multilevel particle filters.
\newblock {\em SIAM Journal on Numerical Analysis}, 55(6):3068--3096, 2017.

\bibitem{rmlpf}
Ajay Jasra, Kody Law, and Fangyuan Yu.
\newblock Unbiased filtering of a class of partially observed diffusions.
\newblock {\em To appear in Advances in Applied Probability, arXiv preprint
  arXiv:2002.03747}, 2020.

\bibitem{rmlsmc}
Ajay Jasra, Kody~JH Law, and Deng Lu.
\newblock Unbiased estimation of the gradient of the log-likelihood in inverse
  problems.
\newblock {\em Statistics and Computing}, 31(3):1--18, 2021.

\bibitem{mlpflevy}
Ajay Jasra, Kody~JH Law, and Prince~Peprah Osei.
\newblock Multilevel particle filters for l{\'e}vy-driven stochastic
  differential equations.
\newblock {\em Statistics and Computing}, 29(4):775--789, 2019.

\bibitem{ourmismc1}
Ajay Jasra, Kody~JH Law, and Yaxian Xu.
\newblock Multi-index sequential {Monte C}arlo methods for partially observed
  stochastic partial differential equations.
\newblock {\em International Journal for Uncertainty Quantification}, 11(3),
  2021.

\bibitem{rmlmcinf}
Ajay Jasra, Kody~JH Law, and Fangyuan Yu.
\newblock Randomized multilevel {Monte C}arlo for embarrassingly parallel
  inference.
\newblock {\em To appear in SMC 2022 Proceedings, arXiv preprint
  arXiv:2107.01913}, 2021.

\bibitem{hengmlpf}
Ajay Jasra, Fangyuan Yu, and Jeremy Heng.
\newblock Multilevel particle filters for the non-linear filtering problem in
  continuous time.
\newblock {\em Statistics and Computing}, 30(5):1381--1402, 2020.

\bibitem{lord}
Gabriel~J Lord, Catherine~E Powell, and Tony Shardlow.
\newblock {\em An introduction to computational stochastic {PDE}s}, volume~50.
\newblock Cambridge University Press, 2014.

\bibitem{cox}
Jesper M{\o}ller, Anne~Randi Syversveen, and Rasmus~Plenge Waagepetersen.
\newblock Log {Gaussian C}ox processes.
\newblock {\em Scandinavian journal of statistics}, 25(3):451--482, 1998.

\bibitem{moral2017multilevel}
Pierre~Del Moral, Ajay Jasra, Kody~JH Law, and Yan Zhou.
\newblock Multilevel sequential {Monte C}arlo samplers for normalizing
  constants.
\newblock {\em ACM Transactions on Modeling and Computer Simulation (TOMACS)},
  27(3):1--22, 2017.

\bibitem{murray2010elliptical}
Iain Murray, Ryan Adams, and David MacKay.
\newblock Elliptical slice sampling.
\newblock In {\em Proceedings of the thirteenth international conference on
  artificial intelligence and statistics}, pages 541--548. JMLR Workshop and
  Conference Proceedings, 2010.

\bibitem{nealpcn}
Radford Neal.
\newblock Regression and classification using {G}aussian process priors.
\newblock {\em Bayesian statistics}, 6:475, 1998.

\bibitem{neal2001annealed}
Radford~M Neal.
\newblock Annealed importance sampling.
\newblock {\em Statistics and computing}, 11(2):125--139, 2001.

\bibitem{nocedal2006numerical}
Jorge Nocedal and Stephen Wright.
\newblock {\em Numerical optimization}.
\newblock Springer Science \& Business Media, 2006.

\bibitem{oksendal2003stochastic}
Bernt {\O}ksendal.
\newblock Stochastic differential equations.
\newblock In {\em Stochastic differential equations}. Springer, 2003.

\bibitem{pavliotis}
Grigorios~A Pavliotis.
\newblock {\em Stochastic processes and applications: diffusion processes, the
  {Fokker-Planck and Langevin} equations}, volume~60.
\newblock Springer, 2014.

\bibitem{robert2013monte}
Christian Robert and George Casella.
\newblock {\em Monte {C}arlo statistical methods}.
\newblock Springer Science \& Business Media, 2013.

\bibitem{ruzayqat2021multilevel}
Hamza Ruzayqat, Neil~K Chada, and Ajay Jasra.
\newblock Multilevel estimation of normalization constants using the ensemble
  {Kalman-B}ucy filter.
\newblock {\em arXiv preprint arXiv:2108.03935}, 2021.

\bibitem{scheichl2017quasi}
Robert Scheichl, Andrew~M Stuart, and Aretha~L Teckentrup.
\newblock Quasi-{Monte Carlo and multilevel Monte C}arlo methods for computing
  posterior expectations in elliptic inverse problems.
\newblock {\em SIAM/ASA Journal on Uncertainty Quantification}, 5(1):493--518,
  2017.

\bibitem{strauss2007partial}
Walter~A Strauss.
\newblock {\em Partial differential equations: An introduction}.
\newblock John Wiley \& Sons, 2007.

\bibitem{stuart2010inverse}
Andrew~M Stuart.
\newblock Inverse problems: a {B}ayesian perspective.
\newblock {\em Acta numerica}, 19:451--559, 2010.

\bibitem{tierney}
Luke Tierney.
\newblock A note on metropolis-hastings kernels for general state spaces.
\newblock {\em Annals of applied probability}, pages 1--9, 1998.

\bibitem{tokdar2007posterior}
Surya~T Tokdar and Jayanta~K Ghosh.
\newblock Posterior consistency of logistic {G}aussian process priors in
  density estimation.
\newblock {\em Journal of statistical planning and inference}, 137(1):34--42,
  2007.

\bibitem{williams1991probability}
David Williams.
\newblock {\em Probability with martingales}.
\newblock Cambridge university press, 1991.

\end{thebibliography}
\bibliographystyle{abbrv}

\end{document}